\def\IE{{\mathbb E}}
\def\IP{{\mathbb P}}
\def\IR{{\mathbb R}}
\def\IZ{{\mathbb Z}}
\def\n{\noindent}
\def\dsl{\textstyle\sum\limits}
\def\dil{\textstyle\int}
\def\pr{\textstyle\prod\limits}
\def\dis{\displaystyle}
\def\ov{\overline}
\def\f{\footnotesize}
\def\r{\rightarrow}
\def\wt{\widetilde}
\def\cC{{\cal C}}
\def\cL{{\cal L}}
\def\cP{{\cal P}}
\def\cF{{\cal F}}
\def\cV{{\cal V}}
\def\cZ{{\cal Z}}
\newtheorem{theorem}{Theorem}[section]
\newtheorem{lemma}[theorem]{Lemma}
\newtheorem{corollary}[theorem]{Corollary}
\newtheorem{proposition}[theorem]{Proposition}
\newtheorem{remark}[theorem]{Remark}
\begin{document}

\noindent
~

\bigskip
\begin{center}
{\bf LEVEL-SET PERCOLATION FOR THE GAUSSIAN FREE FIELD \\
ON A TRANSIENT TREE}
\end{center}

\begin{center}
Angelo Ab\"acherli and Alain-Sol Sznitman
\end{center}

\bigskip\bigskip
\begin{abstract}
We investigate level-set percolation of the Gaussian free field on transient trees, for instance on super-critical Galton-Watson trees conditioned on non-extinction. Recently developed Dynkin-type isomorphism theorems provide a comparison with percolation of the vacant set of random interlacements, which is more tractable in the case of trees. If $h_*$ and $u_*$ denote the respective (non-negative) critical values of level-set percolation of the Gaussian free field and of the vacant set of random interlacements, we show here that $h_* < \sqrt{2u}_*$ in a broad enough set-up, but provide an example where $0 = h_* = u_*$ occurs. We also obtain some sufficient conditions ensuring that $h_* > 0$.

\medskip
\begin{center}
{\bf R\'esum\'e}
\end{center}

Nous \'etudions la percolation de niveau pour le champ libre gaussien sur des arbres transients,
par exemple sur des arbres de Galton-Watson surcritiques conditionn\'es \`a survivre. Des th\'eor\`emes
de type isomorphisme de Dynkin r\'ecemment obtenus offrent un outil de comparaison avec la percolation
de l\textsc{\char13}ensemble vacant pour les entrelacs al\'eatoires, qui se trouve \^etre plus simple \`a \'etudier dans le cas des arbres. Si $h_*$ et $u_*$ d\'esignent les valeurs critiques respectives de la percolation de niveau du champ libre gaussien, et de l\textsc{\char13}ensemble vacant des entrelacs al\'eatoires, nous montrons dans un cadre assez g\'en\'eral que $h_* < \sqrt{2u}_*$, mais pr\'esentons un exemple pour lequel on a les \'egalit\'es $0 = h_* = u_*$. Nous obtenons aussi des conditions suffisantes qui impliquent que $h_* > 0$.

\end{abstract}

\vfill 

\n
Departement Mathematik 
\\
ETH Z\"urich\\
CH-8092 Z\"urich\\
Switzerland

\vfill

~
\newpage
\thispagestyle{empty}
~

\newpage
\setcounter{page}{1}

\setcounter{section}{-1}
\section{Introduction}

In this work we investigate level-set percolation of the Gaussian free field on a transient tree. Recently, over the last couple of years, various versions of Dynkin-type isomorphism theorems have related Gaussian free fields to random interlacements, see for instance \cite{LupuA}, \cite{Rose14}, \cite{SaboTarr}, \cite{Szni12b}, \cite{Szni}, \cite{Zhai}. They have fostered an interplay between level-set percolation of the Gaussian free field and percolation of the vacant set of random interlacements. In the case of transient trees, the vacant cluster of random interlacements at a given site can be expressed in terms of Bernoulli site percolation, see \cite{Teix09b}. This makes percolation of the vacant set of random interlacements more tractable, in particular with the help of the methods developed in \cite{Lyon90}, \cite{Lyon92}, \cite{LyonPere16}. In view of the above mentioned interplay, this feature raises the hope of gaining further insight into the more intricate level-set percolation of the Gaussian free field on a transient tree. This strategy was implemented in \cite{Szni} in the case of $(d+1)$-regular trees when $d \ge 2$. In particular, it was shown there that $0 < h_* < \sqrt{2 u}_*$, if $h_*$ and $u_*$ stand for the respective critical values of level-set percolation of the Gaussian free field and of percolation of the vacant set of random interlacements. Here, we resume this approach in the broader context of transient trees, in particular for super-critical Galton-Watson trees conditioned on non-extinction. Whereas we provide an example showing that $0 = h_* = \sqrt{2u_*}$ may occur, we prove under rather general assumptions that $h_* < \sqrt{2u_*}$, and derive sufficient conditions ensuring that $h_* > 0$.

\medskip
Let us now describe the set-up and our results in more detail. We consider a locally finite tree (that is, a locally finite connected graph without loops) with vertex set $T$, such that each edge has unit weight and the corresponding weighted graph is transient. The discrete time random walk on $T$, when located in $x$, jumps to any given neighbor with probability ${\rm deg}(x)^{-1}$, where ${\rm deg}(x)$ stands for the degree of $x$. We write $P_x$ for the canonical law of the walk starting in $x$, $E_x$ for the corresponding expectation and $(X_k)_{k \ge 0}$ for the walk. The Green function is symmetric, positive, and equals
\begin{equation}\label{0.1}
g(x,y) = \mbox{\small $\dis\frac{1}{{\rm deg}(y)} $}\;E_x \Big[\dsl_{k=0}^\infty 1\{X_k = y\}\Big], \;\mbox{for $x,y \in T$}.
\end{equation}

\n
We write $\IP^G$ for the canonical law on $\IR^T$ of the Gaussian free field on $T$, and denote by $(\varphi_x)_{x \in T}$ the canonical field, so that under $\IP^G$
\begin{equation}\label{0.2}
\mbox{$(\varphi_x)_{x \in T}$ is a centered Gaussian field with covariance $g(\cdot,\cdot)$}.
\end{equation}

\n
The critical value of the level-set percolation of $\varphi$ is defined as 
\begin{equation}\label{0.3}
\mbox{$h_* = \inf\{ h \in \IR$; $\IP^G$-a.s., all connected components of $\{\varphi \ge h\}$ are finite$\}$}
\end{equation}
(here $\{\varphi \ge h\} = \{x \in T; \varphi_x \ge h\}$ and $\inf \phi = \infty$).

\medskip
By a general argument of \cite{BricLeboMaes87}, recalled in the Appendix, one knows that
\begin{equation}\label{0.4}
0 \le h_* \le \infty .
\end{equation}

\n
Further, given $u \ge 0$, we consider the vacant set $\cV^u$ of random interlacements at level $u$. This random subset of $T$ is governed by a probability $\IP^I$ (see (\ref{1.32})) and $\cV^u$ becomes thinner as $u$ increases. The critical value for the percolation of $\cV^u$ is defined as
\begin{equation}\label{0.5}
\mbox{$u_* = \inf\{u \ge 0$; $\IP^I$-a.s., all connected components of $\cV^u$ are finite$\}$}.
\end{equation}

\n
To describe our results we introduce some base point $x_0$ of $T$, and define for any $x$ in $T$, the sub-tree $T_x$ of descendents of $x$, consisting of those $y$ in $T$ for which the geodesic path between $x_0$ and $y$ goes through $x$ (see the beginning of Section 1). We then write, see~(\ref{1.4}),
\begin{equation}\label{0.6}
\mbox{$R^\infty_x =$ the effective resistance between $x$ and $\infty$  in $T_x$}.
\end{equation}

As an application of the cable graph methods initiated in \cite{LupuA}, we show in Corollary \ref{cor2.3} of Section 2 that when
\begin{equation}\label{0.7}
\begin{array}{l}
\mbox{for some $A > 0$, the (deterministic) set $\{x \in T$; $R^\infty_x > A\}$} 
\\
\mbox{only has finite components,}
\end{array}
\end{equation}
then one has
\begin{equation}\label{0.8}
0 \le h_* \le \sqrt{2 u}_*. 
\end{equation}

\n
We also present in Remark \ref{rem2.4} 2) an example where
\begin{equation}\label{0.9}
0 = h_* = \sqrt{2u_*}.
\end{equation}

\n
As an aside one may wonder whether $\IP^G [x_0 \stackrel{\varphi \ge 0}{\longleftrightarrow} \infty] >0$ holds under (\ref{0.7}). This issue is linked to the geometry of the sign-clusters of the Gaussian free field on the cable system, see Remark \ref{rem2.4} 3).

\medskip
Getting hold of strict inequalities strengthening (\ref{0.8}) is more delicate. We provide in Theorem \ref{theo3.3} a rather general sufficient condition, which ensures that $h_* < \sqrt{2u_*}$. This result comes as an application of the special coupling between random interlacements and the Gaussian free field, which appears in Corollary \ref{cor2.3} and was constructed in \cite{Szni} as a refinement of \cite{LupuA}. More precisely, we show in Theorem \ref{theo3.3} that when $0 < u_* < \infty$, and conditions (\ref{3.1}) and (\ref{3.2}) hold, that is, for some $A,B,M, \delta > 0$, for all distant vertices $x$ in $T$ having an infinite line of descent
\begin{align}
& \dsl_{y \in (x_0,x)} 1\{R_y^\infty \le A, \;d_{y^-} \le M\} \ge \delta |x|, \label{0.10}
\\[1ex]
& \dsl_{y \in (x_0,x]} \;\mbox{\small $\dis\frac{1}{R^\infty_y (1 + R^\infty_y)}$}\; \le B |x|  \label{0.11}
\end{align}

\n
(with $|x|$ the distance of $x$ to $x_0$, and $y^-$ the parent of $y$, see the beginning of Section 1 for notation), then, one has
\begin{equation}\label{0.12}
h_* < \sqrt{2u_*} .
\end{equation}

\n
Importantly, to take advantage of the above mentioned special coupling, we show in Proposition \ref{prop3.2} that (\ref{0.10}) implies an exponential decay in $|x|$ of $\IP^G [x_0 \stackrel{\varphi \ge 0}{\longleftrightarrow} x]$. The proof is based on an idea of ``entropic repulsion'' in the spirit of \cite{Giac03}, p.~13,~14.

\medskip
In Proposition \ref{prop4.2} we give a sufficient condition for $h_* > 0$, namely the existence of an infinite binary sub-tree of sites having uniformly bounded degree. As an application of Theorem \ref{theo3.3} and Proposition \ref{prop4.2}, we see for instance that
\begin{align}
&\mbox{$0 < h_* < \sqrt{2u_*} < \infty$, when $T$ has bounded degree and outside a} \label{0.13}
\\
&\mbox{finite subset of $T$, each site has degree at least $3$}. \nonumber
\end{align}

\n
Incidentally, in the case of $\IZ^d$, $d \ge 3$, the inequality $h_* \le \sqrt{2u}_*$ is known, see \cite{LupuA}, but the strict inequality $h_* < \sqrt{2u}_*$ is presently open, and $h_* > 0$ is only known when $d$ is sufficiently large, see \cite{DrewRodr15}, \cite{RodrSzni13}.

\medskip
Our results also apply to typical realizations of super-critical Galton-Watson trees conditioned on non-extinction. In this case, one knows from \cite{Tass10} that $u_*$ is deterministic and $0 < u_* < \infty$. There is even a reasonably explicit formula characterizing $u_*$, which is recalled in (\ref{5.4}). We show in Lemma \ref{lem5.1} that $h_*$ is deterministic as well. In the more challenging Proposition \ref{prop5.2} we show that (\ref{3.1}) (or (\ref{0.10})) holds almost surely on non-extinction. In particular, as by-product, we deduce that
\begin{equation}\label{0.14}
\begin{array}{l}
0 \le h_* \le \sqrt{2u_*} < \infty, \;\mbox{and almost surely on non-extinction}
\\
\mbox{$\IP^G[x_0 \stackrel{\varphi \ge 0}{\longleftrightarrow} x]$ has exponential decay in $|x|$}.
\end{array}
\end{equation}

\n
When the offspring distribution has in addition some finite exponential moment (used to check (\ref{3.2})) we show in Theorem \ref{theo5.4} that
\begin{equation}\label{0.15}
 h_* < \sqrt{2u_*} \,.
\end{equation}

\n
We also provide a sufficient condition for $h_* > 0$ in Theorem \ref{theo5.8}.  We show in Theorem \ref{theo5.8} that $h_* > 0$ when the offspring distribution has mean $m > 2$. Whereas Proposition \ref{prop4.2}  relies on  the existence of an infinite binary sub-tree of sites having uniformly bounded degree, Theorem \ref{theo5.8} follows a strategy in the spirit of Tassy \cite{Tass10} for random interlacements on Galton-Watson trees, but the situation is more complicated in the case of the Gaussian free field.  One can naturally wonder whether $h_* > 0$ holds generally when $m>1$, see also Remark \ref{rem5.9} .

\medskip
We now explain the organization of this article. In Section 1 we introduce further notation and recall various facts concerning the Gaussian free field and random interlacements. In Section 2 we consider the Gaussian free field $\wt{\varphi}$ on the cable system attached to $T$. We introduce condition (\ref{2.3}) (see also (\ref{0.7})), which enables us to prove in Proposition \ref{prop2.2} that $\{\wt{\varphi} > 0\}$ only has bounded components, and hence to apply the results of \cite{LupuA} and \cite{Szni}. We show (\ref{0.8}) in Corollary \ref{cor2.3}. In Section 3 we introduce the conditions (\ref{3.1}), (\ref{3.2}) (see (\ref{0.10}), (\ref{0.11})), and show in Theorem \ref{theo3.3} that together with the assumption $0 < u_* < \infty$, they imply that $h_* < \sqrt{2u_*}$. An important step established in Proposition \ref{prop3.2} shows the exponential decay of $\IP^G[x_0 \overset{\varphi \ge 0}{\longleftrightarrow} x]$ under (\ref{3.1}) (i.e.~(\ref{0.10})). Section 4 provides a sufficient condition for $h_* > 0$ in Proposition \ref{prop4.2} and the proof of (\ref{0.13}) follows from Corollary \ref{cor4.5}, as explained in Remark \ref{rem4.6}. In Section 5 applications to super-critical Galton-Watson trees conditioned on non-extinction are discussed. The fact that $h_*$ is deterministic (i.e.~almost surely constant conditioned on non-extinction) appears in Lemma \ref{lem5.1}. The key Proposition \ref{prop5.2} establishes that (\ref{3.1}) holds almost surely on non-extinction, and (\ref{0.14}) comes as a by-product, see also Theorem \ref{theo5.4}. Then, Proposition \ref{prop5.3} shows that under the finiteness of some exponential moment of the offspring distribution, condition (\ref{3.2}) holds almost surely on non-extinction. From that (\ref{0.15}) readily follows in Theorem \ref{theo5.4}. In Theorem \ref{theo5.8} we give sufficient conditions for $h_* > 0$. Finally, the Appendix contains the proof of the inequality $h_* \ge 0$ in the general set-up of transient weighted graphs, along similar arguments as in \cite{BricLeboMaes87}.

\section{Some preliminaries}
\setcounter{equation}{0}

In this section we introduce further notation and collect useful results concerning transient trees, random walks, level-set percolation of the Gaussian free field, and percolation of the vacant set of random interlacements.

\medskip
We consider a locally finite tree $T$ with root $x_0$ such that each edge has unit weight and the resulting network is transient. We write $x \sim y$ when $x$ and $y$ are neighbors in $T$, we let $d(\cdot,\cdot)$ stand for the geodesic distance on the tree and $|x| = d(x,x_0)$ stand for the height of $x$ in $T$. Given $U\subseteq T$, we let $\partial U = \{y \in T \backslash U$; $d(y,x) = 1$, for some $x \in U\}$ and $\partial_i U = \{x \in U$; $d(y,x) = 1$, for some $y \in T \backslash U\}$ respectively denote the outer and inner boundary of $U$. We let $|U|$ stand for the cardinality of $U$. For $x$ in $T$ we write $d(x,U) = \inf\{d(x,y)$; $y \in U\}$ for the distance of $x$ to $U$. Given $x,y$ in $T$, we let $[x,y]$ stand for the collection of sites on the geodesic path from $x$ to $y$. We also use the notation $(x,y]$, $[x,y)$, or $(x,y)$ when we exclude one or both endpoints. When $x \not= x_0$ we let $x^-$ stand for the last point before $x$ on the geodesic path from $x_0$ to $x$. Given $x \in T$, we denote by $d_x = |\{y \in T$; $y^- = x\}|$ the number of descendants of $x$, so that ${\rm deg}(x)$, the degree at $x$, equals $d_{x_0}$ when $x = x_0$ and $d_x + 1$ when $x \not= x_0$. We let $T_x$ stand for the sub-tree of descendents of $x$, i.e.~consisting of those $y$ in $T$ for which $x$ belongs to $[x_0,y]$. As far as dependence on the choice of the root is concerned, note that if a new root $x'_0$ is chosen, then $T_x$ remains unchanged as soon as $x \notin [x_0,x'_0]$. Finally, a cut-set $C$ separating $x_0$ from infinity (we will write cut-set for short) is a finite subset of $T \backslash \{x_0\}$, such that $x \not= y$ in $C$ implies that $x \notin T_y$ (and $y \notin T_x)$, and the connected component $U_C$ of $x_0$ after deletion of the edges $\{x^-,x\}$, $x \in C$, is finite. We write $B_C = U_C \cup C$.

\medskip
We now introduce some notation concerning simple random walk and potential theory on $T$. Given $U \subseteq T$, we write $T_U = \inf\{k \ge 0$; $X_k \notin U\}$ for the exit time of $U$, $H_U = \inf\{k \ge 0; X_k \in U\}$ for the entrance time in $U$, and $\wt{H}_U = \inf\{k \ge 1$;  $X_k \in U\}$ for the hitting time of $U$ of the canonical walk $(X_k)_{k \ge 0}$ on $T$.

\medskip
With similar notation as in (\ref{0.1}), the Green function killed outside $U$ is
\begin{equation}\label{1.1}
g_U (x,y) = \mbox{\f $\dis\frac{1}{{\rm deg}(y)}$} \;E_x \big[\dsl_{0 \le k < T_U} 1\{X_k = y\}\big], \; \mbox{for $x,y \in T$}.
\end{equation}

\medskip\n
It is symmetric and vanishes when $x$ or $y$ does not belong to $U$. When $U = T$, we recover the Green function $g(x,y)$ from (\ref{0.1}).

\bigskip
For $K$ finite subset of $T$, the equilibrium measure of $K$ is defined as
\begin{equation}\label{1.2}
e_K(x) = {\rm deg}(x) \;P_x[\wt{H}_K = \infty] \,1_K(x), \;\mbox{for $x \in T$}.
\end{equation}
It is concentrated on the inner boundary of $K$ and satisfies the identity
\begin{equation}\label{1.3}
P_x [H_K < \infty] = \dsl_y g(x,y) \, e_K(y), \; \mbox{for $x \in T$}.
\end{equation}
The total mass of $e_K$ is the capacity ${\rm cap}(K)$ of $K$.

\medskip
As mentioned in the Introduction, an important quantity for $x$ in $T$ is the positive (possibly infinite) quantity
\begin{equation}\label{1.4}
\mbox{$R^\infty_x =$ the effective resistance between $x$ and $\infty$ in $T_x$}
\end{equation}

\n
(in particular $R^\infty_x = \infty$ when $|T_x| < \infty$, and $R^\infty_x$ is the non-decreasing limit in $N$ of the effective resistance in $T_x$ between $x$ and $\{x' \in T_x$; $d(x,x') = N\}$, when $|T_x| = \infty$).

\medskip
As an aside, note that by the observation made above (\ref{1.1}) moving the root $x_0$ to a different location $x'_0$ will only change finitely many of the $R^\infty_x$, $x \in T$. We then define
\begin{equation}\label{1.5}
\alpha_x = \mbox{\f $\dis\frac{R^\infty_x}{1 + R^\infty_x}$} \in (0,1], \; \mbox{for $x \in T$},
\end{equation}
as well as for $0 < \alpha \le 1$ the operator
\begin{equation}\label{1.6}
Q^\alpha f(a) = E^Y [f(\alpha a + \sqrt{\alpha} \,Y)], \; \mbox{for $a \in \IR$},
\end{equation}

\n
where $Y$ stands for a standard normal variable, $E^Y$ for the corresponding expectation, and $f$ for a bounded measurable function. Note that for $\alpha = 1$, the above $Q^\alpha$ coincides with the Brownian transition kernel at time $1$.

\medskip
We now turn to the Gaussian free field $\varphi$ on $T$. For $U \subseteq T$ we denote by $\sigma_U$ the $\sigma$-algebra
\begin{equation}\label{1.7}
\sigma_U = \sigma(\varphi_x, x \in U).
\end{equation}

\n
From the Markov property of the Gaussian free field, one knows that for $x,y$ in $T$ with $y^- = x$,
\begin{align}
&(\varphi_{y'} - P_{y'} [H_x < \infty] \,\varphi_x)_{y' \in T_y} \;\mbox{is a centered Gaussian field with} \label{1.8}
\\
&\mbox{covariance $g_{U = T_y}(\cdot,\cdot)$ independent of $\sigma_{T \backslash T_y}$}. \nonumber
\end{align}

\n
The next lemma relates the objects we have now introduced, and will be recurrently used in this work ((\ref{1.15}) will be used in the proof of Proposition \ref{prop2.2} in Section 2).

\begin{lemma}\label{lem1.1}  For $x$ in $T$, one has
\begin{align}
&\hspace{-3.7cm}g(x,x) \le R^\infty_x, \;\mbox{with equality when $x = x_0$}, \label{1.9}
\\[2ex]
\label{1.10}
\left\{ \begin{array}{ll}
{\rm i)} &g(x,x) \ge 1/{\rm deg}(x), 
\\[1ex]
{\rm ii)} &R^\infty_x \ge 1 / d_x.
\end{array}\right.
\end{align}

\n
For $x,y$ in $T$ with $y^- = x$,
\begin{align}
P_y[H_x < \infty] &= \alpha_y, \; \; P_y [H_x = \infty] = (1 + R^\infty_y)^{-1}, \label{1.11}
\\[1ex]
g_{U = T_y} (y,y) &= \alpha_y, \label{1.12}
\end{align}
and for any bounded measurable function $f$ on $\IR$ one has
\begin{equation}\label{1.13}
\IE^G [f(\varphi_y) \, | \, \sigma_{T \backslash T_y}]= Q^{\alpha_y} f(\varphi_x).
\end{equation}
When $C$ is a cut-set, one has the identities
\begin{align}
& e_C(x) = \dis\frac{1}{R^\infty_x} \; \;\mbox{when $x \in C$, and}\label{1.14}
\\
&1 = \dsl_{x \in C} g(x_0,x) \;  \dis\frac{1}{R^\infty_x}.\label{1.15}
\end{align}
\end{lemma}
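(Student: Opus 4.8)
The plan is to treat each identity by reducing everything to the random walk on the sub-tree $T_y$ (or $T_x$), exploiting the tree structure: once the walk leaves $T_y$ through $y^- = x$, it can only return to $T_y$ by passing through $x$ again, so the relevant hitting probabilities and Green's function entries are those of the walk on $T_y$ started at $y$, which in turn are governed by the single effective resistance $R^\infty_y$. For \eqref{1.9}, I would write $g(x,x) = \mathrm{deg}(x)^{-1} E_x[\sum_k 1\{X_k=x\}] = \mathrm{deg}(x)^{-1}(1 - P_x[\wt H_x<\infty])^{-1} = (\mathrm{deg}(x) P_x[\wt H_x = \infty])^{-1} = \mathrm{cap}(\{x\})^{-1}$, and then compare $\mathrm{cap}(\{x\})$ (in the full tree) with $\mathrm{cap}(\{x\})$ computed in $T_x$, which by Rayleigh monotonicity (removing the edges leading out of $T_x$) is no larger; the latter capacity is exactly $1/R^\infty_x$ by the standard electrical-network identity $\mathrm{cap}(\{x\}) = C^{\mathrm{eff}}(x\leftrightarrow\infty)$. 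Equality at $x_0$ holds because $T_{x_0} = T$, so no edges are removed. For \eqref{1.10}(i), $g(x,x) = \mathrm{cap}(\{x\})^{-1} \ge (\mathrm{deg}(x))^{-1}$ since $P_x[\wt H_x = \infty] \le 1$; and \eqref{1.10}(ii) is a one-step estimate: from $x$, with probability $1/\mathrm{deg}(x)$ each the walk steps to one of its $d_x$ children and then may escape to infinity, so the escape probability from $x$ in $T_x$ is at most... more directly, $R^\infty_x \ge 1/d_x$ follows because $T_x$ consists of $x$ joined by $d_x$ unit edges to $d_x$ disjoint sub-trees, and paralleling $d_x$ branches each of resistance $\ge 1$ gives effective resistance $\ge 1/d_x$ (Rayleigh again, or the parallel law plus each branch resistance being at least the first edge's resistance $1$).

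For \eqref{1.11}, I would use the recursive/harmonic structure of escape probabilities on a tree: let $q = P_y[H_x = \infty]$ be the probability the walk from $y$ never hits its parent $x$. Conditioning on the first step and using the tree structure (the children of $y$ lead into disjoint sub-trees), one gets the classical relation between $q$ and the effective resistance; cleanest is to observe that $P_y[H_x<\infty]$ is the probability the walk from $y$ eventually reaches $x$, and the function $h(z) = P_z[H_x<\infty]$ on $T_y\cup\{x\}$ is harmonic off $\{x\}$ with $h(x)=1$, $h(\infty)=0$, so $h(y) = R^\infty_y / (R^\infty_y + R_{\text{edge}})$ where $R_{\text{edge}} = 1$ is the resistance of the edge $\{x,y\}$ — i.e.\ $h(y) = R^\infty_y/(1+R^\infty_y) = \alpha_y$ by \eqref{1.5}; the complement is $(1+R^\infty_y)^{-1}$. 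Then \eqref{1.12}: $g_{T_y}(y,y) = (\mathrm{deg}_{T_y}(y) P_y[\wt H_y^{T_y} = \infty])^{-1}$ where the walk killed outside $T_y$ is just the walk on $T_y$; but $g_{T_y}(y,y) = R^\infty_y \cdot$ (something) — actually the direct route is $g_{T_y}(y,y) = E_y[\text{time at } y \text{ before exiting } T_y]\cdot\mathrm{deg}(y)^{-1}$, and since exiting $T_y$ means stepping to $x$, a first-step decomposition gives $g_{T_y}(y,y) = \mathrm{deg}(y)^{-1}(1 - P_y[\wt H_y < H_x])^{-1}$; identifying this with $\alpha_y$ is a short computation combining \eqref{1.11} with the one-step analysis, or alternatively noting $g_{T_y}(y,y)$ is the effective resistance from $y$ to $\{x\}\cup\infty$ "grounded", which by the parallel law is $(1/1 + 1/R^\infty_y)^{-1} = R^\infty_y/(1+R^\infty_y) = \alpha_y$. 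Identity \eqref{1.13} is then immediate: by the Markov property \eqref{1.8}, conditionally on $\sigma_{T\setminus T_y}$ the field at $y$ is $P_y[H_x<\infty]\,\varphi_x + \xi = \alpha_y\varphi_x + \xi$ with $\xi$ centered Gaussian of variance $g_{T_y}(y,y) = \alpha_y$, i.e.\ $\xi \stackrel{d}{=} \sqrt{\alpha_y}\,Y$, which is exactly the content of $Q^{\alpha_y}$ in \eqref{1.6}.

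For \eqref{1.14}, when $C$ is a cut-set and $x\in C$, the equilibrium measure $e_C(x) = \mathrm{deg}(x) P_x[\wt H_C = \infty]$; since the sites of $C$ lie in distinct sub-trees, starting from $x\in C$ the walk returns to $C$ iff it returns to $x$, and $\wt H_C = \infty$ iff the walk, after its first step, either escapes to infinity within $T_x$ (probability contribution via the $d_x$ children) or goes to $x^-$ and never comes back to $x$ — combining, $\mathrm{deg}(x)P_x[\wt H_C=\infty]$ telescopes to the effective conductance from $x$ to infinity in $T_x$, namely $1/R^\infty_x$. (Equivalently: $e_C(x) = \mathrm{cap}_{T_x}(\{x\})$ because separating at $C$ makes the exterior of each $T_x$, $x\in C$, irrelevant to whether the walk from $x$ avoids $C$ forever modulo the escape into $T_x$ itself; one can make this precise by the strong Markov property at the first return to $x$ versus first escape.) Finally \eqref{1.15} is just \eqref{1.3} applied with $K = B_C$ (or directly with the cut-set): $1 = P_{x_0}[H_{B_C} < \infty] = \sum_{x} g(x_0,x)\, e_{B_C}(x) = \sum_{x\in C} g(x_0,x)/R^\infty_x$, using that $e_{B_C}$ is supported on $\partial_i B_C = C$ and equals $e_C$ there by \eqref{1.14} — the point being that $x_0\in U_C$ so $P_{x_0}[H_{B_C}<\infty]=1$ since the walk is transient and must exit the finite set $U_C$, necessarily through $C$.

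I expect the main obstacle to be \eqref{1.12} and the precise bookkeeping in \eqref{1.14}: identifying killed Green's function values and equilibrium measures with effective resistances requires care about which network one is in (the full tree versus $T_x$) and which vertices are "grounded", and the cleanest argument likely routes through the general identities $g_U(x,x) = $ (effective resistance from $x$ to $\partial U$ with $\partial U$ grounded) and $e_K(x) = \mathrm{cap}(K)\cdot(\text{harmonic measure})$, together with the parallel/series laws; once those are set up, the tree structure (sub-trees hanging off a vertex are in parallel, an edge is in series) makes all five identities fall out. The Gaussian statements \eqref{1.13} and the covariance bound \eqref{1.9} are then routine consequences of the Markov property \eqref{1.8} and Rayleigh monotonicity respectively.
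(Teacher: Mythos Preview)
Your approach matches the paper's: everything is reduced to effective resistance on the sub-tree $T_x$ (or $T_y$), with \eqref{1.9}--\eqref{1.12} coming from Rayleigh monotonicity and the series/parallel laws, \eqref{1.13} from the Markov property \eqref{1.8} plugged into \eqref{1.11}--\eqref{1.12}, and \eqref{1.15} from \eqref{1.3} with $K = B_C$.

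There is, however, a concrete error in your argument for \eqref{1.14}. You write that ``starting from $x\in C$ the walk returns to $C$ iff it returns to $x$'', and then decompose $\{\wt H_C = \infty\}$ as ``escapes within $T_x$'' \emph{or} ``goes to $x^-$ and never comes back to $x$''. Both claims are false: from $x$ the walk can step to $x^- \in U_C$, wander in the finite set $U_C$, exit at some other $x' \in C$, and then escape to infinity inside $T_{x'}$; on this event $\wt H_C < \infty$ while $\wt H_x = \infty$. The second branch of your decomposition therefore does \emph{not} contribute to $\{\wt H_C = \infty\}$ --- it has probability zero of avoiding $C$, since from $x^- \in U_C$ the walk must exit the finite set $U_C$ through $C$.

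The clean fix (which is what the paper does, and what your parenthetical alternative is groping toward) is to note $e_C = e_{B_C}$: for $x \in C$, stepping to $x^-$ gives $\wt H_{B_C} = 1$ immediately, so the only contribution to $P_x[\wt H_{B_C} = \infty]$ comes from stepping to a child $y$ of $x$ and then never returning to $x$ (equivalently, never returning to $B_C$, since from $T_y$ the only entry to $B_C$ is through $x$). Hence
\[
e_C(x) = e_{B_C}(x) = \mathrm{deg}(x)\,P_x[\wt H_{B_C} = \infty] = \sum_{y^- = x} P_y[H_x = \infty] \stackrel{\eqref{1.11}}{=} \sum_{y^- = x} (1+R^\infty_y)^{-1} = \frac{1}{R^\infty_x}.
\]
With this correction your proposal is complete and coincides with the paper's proof.
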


\begin{proof}
The claims (\ref{1.9}) and (\ref{1.10}) follow from the fact that $g(x,x)$ is the effective resistance between $x$ and infinity in $T$, whereas $R^\infty_x$ is the effective resistance between $x$ and infinity in $T_x$. As for (\ref{1.11}), set $T'_y = \{x\} \cup T_y$, then the effective conductance between $x$ and infinity in the sub-tree $T'_y$ coincides with the escape probability $P_y[H_x = \infty]$, see also \cite{LyonPere16}, above Proposition 17.26, so that $P_y[H_x = \infty] = (1 + R^\infty_y)^{-1}$ and $P_y[H_x < \infty] = \alpha_y$. Concerning (\ref{1.12}), note that $g_{U = T_y} (y,y)$ coincides with the effective resistance between $y$ and $\{x\} \cup \{\infty\}$ in $T'_y$, so that $g_{U = T_y}(y,y) = (1+ \frac{1}{R^\infty_y})^{-1} = \alpha_y$, whence (\ref{1.12}).

\medskip
We now turn to (\ref{1.13}). By (\ref{1.8}) we know that $\varphi_y - P_y [H_x < \infty] \,\varphi_x$ is a Gaussian variable with variance $g_{U = T_y}(y,y)$. By (\ref{1.11}), (\ref{1.12}), and the formula (\ref{1.6}) defining $Q^\alpha$ the claim (\ref{1.13}) readily follows. Concerning (\ref{1.14}), we recall the notation $B_C$ for $C$ a cut-set, see above (\ref{1.1}). One has the equality
\begin{equation} \label{1.16}
e_{B_C} = e_C,
\end{equation}
so that
\begin{equation} \label{1.17}
e_C(x) \stackrel{(\ref{1.2})}{=} \dsl_{y^- = x} P_y[H_x = \infty] \stackrel{(\ref{1.11})}{=} \dsl_{y^- = x} (1 + R^\infty_y)^{-1} = \dis\frac{1}{R^\infty_x}, \;\mbox{for $x \in C$}.
\end{equation}

\medskip\n
Moreover, (\ref{1.15}) is now the direct application of (\ref{1.3}) (with the choice $x = x_0$, $K = B_C$) together with (\ref{1.14}). This concludes the proof of Lemma \ref{lem1.1}. 
\end{proof}

\medskip
\begin{remark}\label{rem1.2} \rm
Incidentally, when $y_n$, $0 \le n < N$, with $N < \infty$ or $N = \infty$, is a finite or semi-infinite geodesic path in $T$ moving away from the root $x_0$, and $R^\infty_{y_n} = \infty$ for each $1 \le n < N$, it follows from (\ref{1.13}) and from the observation made below (\ref{1.6}) that $(\varphi_{y_n})_{0 \le n < N}$ under $\IP^G$ is distributed as a Brownian motion with the initial law $N(0,g(y_0,y_0))$, sampled at the integer times $0 \le n < N$. \hfill $\square$
\end{remark}

We now continue with level-set percolation of the Gaussian free field. Given $x$ in $T$, and $h$ in $\IR$, we denote by $\{x \overset{\varphi \ge h}{\longleftrightarrow} \infty\}$ the event that the connected component of $\{\varphi \ge h\}$ containing $x$ is infinite. If $\IP^G[ x \overset{\varphi \ge h}{\longleftrightarrow} \infty] > 0$ and $y$ is neighbor of $x$, it is straightforward with (\ref{1.8}) (where $x$ plays the role of $x_0$) to infer that $\IP^G[y  \overset{\varphi \ge h}{\longleftrightarrow} \infty] > 0$ (one can also use the FKG-Inequality, see the Appendix of \cite{Giac03}). In other words, if $\IP^G[ x \overset{\varphi \ge h}{\longleftrightarrow} \infty]$ vanishes for some $x$ in $T$, it vanishes for all $x$ in $T$, and so we can express the critical value $h_*$ defined in (\ref{0.3}) as
\begin{equation}\label{1.18}
h_* = \inf\big\{h \in \IR; \;\IP^G \big[x_0 \overset{\varphi \ge h}{\longleftrightarrow} \infty] = 0\big\} \; \mbox{(with $x_0$ the root)}.
\end{equation}

\n
By an argument of \cite{BricLeboMaes87} one knows (actually, in the general set-up of transient weighted graphs, see Proposition \ref{propA2} of the Appendix) that
\begin{equation}\label{1.19}
0 \le h_* \le \infty.
\end{equation}
\n
Incidentally, in the case of $\IZ^d$, $d \ge 3$, one knows that $h_* < \infty$ for all $d \ge 3$, see \cite{BricLeboMaes87}, \cite{RodrSzni13}, but $h_* >0$ has only been proved  when $d$ is large enough, see \cite{RodrSzni13}, \cite{DrewRodr15}.

\bigskip
To further characterize $h_*$, we will now construct for each $h \in \IR$ and $x$ in $T$ a \linebreak $[0,1]$-valued function $q_{x,h}(\cdot)$, which is a ``good version'' of the conditional expectation \linebreak $\IP^G[x \overset{\!\!\!\!\!\!\!T_x,\varphi \ge h}{\longleftrightarrow  \hspace{-3ex}\mbox{\scriptsize /}\quad\infty} \,| \varphi_x= \cdot]$, where $\{x \overset{\!\!\!\!\!\!\!T_x,\varphi  \ge h}{\longleftrightarrow  \hspace{-3ex}\mbox{\scriptsize /}\quad\infty} \}$ refers to the event that the connected component of $x$ in $T_x \cap \{ \varphi \ge h\}$ is finite.

\medskip
For the definition we will now give, it is convenient to broaden the set-up, so that $T$ is a tree with root $x_0$, which is possibly recurrent or even finite. If $T$ is recurrent then we set $R^\infty_x = \infty$ and $\alpha_x = 1$, for all $x \in T$.

\medskip
For each $n \ge 0$, we write
\begin{equation}\label{1.20}
T_n = \{x \in T; \,|x| = n\}, \; B_n = \{x \in T; \,|x| \le n\}
\end{equation}
(so $T_n$ is possibly empty, when $T$ is finite).

\medskip
Then, for each $n \ge 0$, $h \in \IR$, $x \in B_n$, we define the functions $q^n_{x,h}(\cdot)$ by recursion towards the root $x_0$, starting from the boundary $T_n$, via
\begin{equation}\label{1.21}
\left\{ \begin{array}{l}
q^n_{x,h}(a) = 1_{(-\infty,h)}(a), \; \mbox{for $x \in T_n$}
\\[1ex]
q^n_{x,h}(a) = 1_{(-\infty,h)}(a) + 1_{[h,\infty)}(a) \prod\limits_{y^- = x} Q^{\alpha_y} (q^n_{y,h})(a), \; \mbox{for $|x| < n$},
\end{array}\right.
\end{equation}

\medskip\n
and an empty product (when $x$ has no descendent) is understood as equal to $1$.

\medskip
Note that when $T$ is finite and $n \ge 1$ such that $T_n = \phi$, then
\begin{equation}\label{1.22}
q^n_{x,h}(\cdot) = 1, \; \mbox{for all $x \in T$}.
\end{equation}

\begin{lemma}\label{lem1.2} ($T$ a possibly finite or recurrent tree with root $x_0$)

\medskip\n
The functions $q^n_{x,h}(a)$, for $n \ge |x|$, are non-increasing in $a$, $[0,1]$-valued, equal to $1$ on $(-\infty,h)$, with only possible discontinuity at $h$. For a fixed $x \in T$, they increase with $n \ge |x|$, and converge to a function $q_{x,h}(a)$ with similar properties, and such that
\begin{equation}\label{1.23}
q_{x,h} = 1_{(-\infty,h)} + 1_{[h,\infty)} \pr\limits_{y^- = x} Q^{\alpha_y} (q_{y,h}), \; \mbox{for all $x \in T$}
\end{equation}

\n
(and an empty product is understood as equal to $1$).

\medskip\n
When $T$ is finite, then
\begin{equation}\label{1.24}
q_{x,h} = 1, \; \mbox{for all $h \in \IR$, $x \in T$}.
\end{equation}

\medskip\n
When $T$ is transient, then for any $h \in \IR$, $x \in T$, one has
\begin{align}
q^n_{x,h}(\varphi_x) & \stackrel{\IP^G{\rm -a.s.}}{=} \IP^G\big[x \overset{\mbox{\scriptsize $T_x, \varphi \ge h$} \atop \;}{\longleftrightarrow} \hspace{-4.5ex}\mbox{\scriptsize /}\quad \;  T_n \,| \, \varphi_x\big], \; \mbox{for $n \ge |x|$}, \label{1.25}
\\[2ex]
q_{x,h}(\varphi_x) & \stackrel{\IP^G{\rm -a.s.}}{=} \IP^G\big[x \overset{\mbox{\scriptsize $T_x, \varphi \ge h$} \atop \;}{\longleftrightarrow} \hspace{-4.5ex}\mbox{\scriptsize /}\quad \; \infty \,| \, \varphi_x\big]. \label{1.26}
\end{align}
In addition, one has the dichotomy
\begin{equation}\label{1.27}
\left\{ \begin{array}{rl}
{\rm i)} & q_{x_0,h} = 1 \; \mbox{for $h > h_*$},
\\[1ex]
{\rm ii)} &q_{x_0,h} \; \mbox{is not identically $1$ for $h < h_*$}.
\end{array}\right.
\end{equation}
\end{lemma}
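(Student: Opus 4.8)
The plan is to establish the elementary analytic properties of the $q^n_{x,h}$ first, then pass to the limit, and finally identify the limit probabilistically and deduce the dichotomy. For the first part I would argue by downward induction on $|x|$ from $n$ to $0$. At level $n$ the function $q^n_{x,h}=1_{(-\infty,h)}$ is manifestly $[0,1]$-valued, non-increasing, equal to $1$ on $(-\infty,h)$, and has its only discontinuity at $h$. For the inductive step, note that $Q^{\alpha_y}$ is an averaging operator against a Gaussian density (of positive variance $\alpha_y\in(0,1]$), so it maps $[0,1]$-valued functions to $[0,1]$-valued functions, maps non-increasing functions to \emph{continuous} non-increasing functions, and (crucially) maps a function that is $\le 1$ everywhere and $<1$ on a set of positive Lebesgue measure to a function that is $<1$ \emph{everywhere}; in particular $Q^{\alpha_y}(q^n_{y,h})$ is continuous. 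Hence in the recursion (\ref{1.21}) the product $\prod_{y^-=x}Q^{\alpha_y}(q^n_{y,h})$ is continuous, $[0,1]$-valued and non-increasing, so $q^n_{x,h}=1_{(-\infty,h)}+1_{[h,\infty)}\prod_{y^-=x}Q^{\alpha_y}(q^n_{y,h})$ inherits all the stated properties, the only possible jump being at $h$ where the indicator switches. Monotonicity in $n$: since $T_{n+1}\supseteq$ the ``next generation'', one checks that $q^{n+1}_{x,h}\ge q^n_{x,h}$ for $x\in T_n$ (indeed $q^{n+1}_{x,h}=1_{(-\infty,h)}+1_{[h,\infty)}\prod_{y^-=x}Q^{\alpha_y}(q^{n+1}_{y,h})\ge 1_{(-\infty,h)}=q^n_{x,h}$ as the product is $\ge 0$), and then, using that $Q^{\alpha}$ and finite products are monotone operators, downward induction on $|x|$ propagates $q^{n+1}_{x,h}\ge q^n_{x,h}$ to all $x\in B_n$. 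The bounded increasing limit $q_{x,h}=\lim_n q^n_{x,h}$ then exists, is $[0,1]$-valued, non-increasing, equal to $1$ on $(-\infty,h)$, and passing to the limit in (\ref{1.21}) via monotone (or dominated) convergence inside the Gaussian integrals $Q^{\alpha_y}$ — legitimate since each factor is in $[0,1]$ — yields the fixed-point relation (\ref{1.23}); upper semicontinuity of each $q^n$ at $h$ passes to the increasing limit, giving the stated regularity of $q_{x,h}$. Equation (\ref{1.24}) for finite $T$ is immediate from (\ref{1.22}): for $n$ large enough $T_n=\emptyset$, so $q^n_{x,h}\equiv 1$ for all $x$, hence $q_{x,h}\equiv 1$.

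For the probabilistic identification (\ref{1.25}), fix $n\ge|x|$ and argue by downward induction on $|x|\le n$, conditioning on $\sigma_{T\setminus T_y}$ for the children $y$ of $x$. For $x\in T_n$, the component of $x$ in $T_x\cap\{\varphi\ge h\}$ already fails to reach $T_n$ iff $\varphi_x<h$ (recall $x\in T_n$ itself), giving $q^n_{x,h}(\varphi_x)=1_{(-\infty,h)}(\varphi_x)=\IP^G[\,x\not\leftrightarrow T_n\mid\varphi_x\,]$. For $|x|<n$: on $\{\varphi_x<h\}$ the component of $x$ is empty, so the conditional probability is $1=q^n_{x,h}(\varphi_x)$; on $\{\varphi_x\ge h\}$, the event $\{x\not\leftrightarrow T_n\}$ in $T_x\cap\{\varphi\ge h\}$ is the intersection over the children $y$ of $x$ of the events $\{y\not\leftrightarrow T_n$ within $T_y\cap\{\varphi\ge h\}\}$, and by (\ref{1.8}) the fields $(\varphi_{y'})_{y'\in T_y}$ for distinct children $y$ are conditionally independent given $\varphi_x$, so the conditional probability factorizes as $\prod_{y^-=x}\IP^G[\,y\not\leftrightarrow T_n\mid\varphi_x\,]=\prod_{y^-=x}\IE^G[\,q^n_{y,h}(\varphi_y)\mid\varphi_x\,]$, which by (\ref{1.13}) equals $\prod_{y^-=x}Q^{\alpha_y}(q^n_{y,h})(\varphi_x)$; combining the two cases gives exactly (\ref{1.21}), i.e.\ (\ref{1.25}). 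Letting $n\to\infty$, the events $\{x\not\leftrightarrow T_n$ in $T_x\cap\{\varphi\ge h\}\}$ increase to $\{x\not\leftrightarrow\infty$ in $T_x\cap\{\varphi\ge h\}\}$, so by the conditional monotone convergence theorem and the already established $q^n_{x,h}\uparrow q_{x,h}$ we obtain (\ref{1.26}).

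Finally the dichotomy (\ref{1.27}). Integrating (\ref{1.26}) against the law of $\varphi_{x_0}\sim N(0,g(x_0,x_0))$ gives $\IE^G[q_{x_0,h}(\varphi_{x_0})]=\IP^G[x_0\not\leftrightarrow\infty$ in $T_{x_0}\cap\{\varphi\ge h\}]=\IP^G[x_0\not\leftrightarrow\infty$ in $\{\varphi\ge h\}]$ since $T_{x_0}=T$; thus $q_{x_0,h}\equiv 1$ (equivalently, $=1$ Lebesgue-a.e., which by the regularity above forces $\equiv 1$ except possibly at $h$, and one checks the value at $h$ separately — or simply notes $q_{x_0,h}$ is non-increasing and upper semicontinuous so equals $1$ everywhere once it does so a.e.) if and only if $\IP^G[x_0\overset{\varphi\ge h}{\longleftrightarrow}\infty]=0$. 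For $h>h_*$, by definition (\ref{1.18}) of $h_*$ this probability vanishes, giving (i). For $h<h_*$, again by (\ref{1.18}) the probability is strictly positive, so $\IE^G[q_{x_0,h}(\varphi_{x_0})]<1$ and $q_{x_0,h}$ cannot be identically $1$, giving (ii). I expect the main technical point to be the monotone passage to the limit through the nested Gaussian convolutions $Q^{\alpha_y}$ — one must be careful that a countable family of children is handled via the dominated convergence theorem (each factor bounded by $1$) rather than a naive interchange — together with the slightly delicate bookkeeping that $q_{x_0,h}\equiv 1$ is equivalent to the vanishing of the percolation probability despite the possible discontinuity at $h$, which is resolved by the upper semicontinuity and monotonicity recorded in the first part.
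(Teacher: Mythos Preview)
Your proposal is correct and follows essentially the same route as the paper: downward induction on $|x|$ for the analytic properties and for (\ref{1.25}), monotone convergence for (\ref{1.23}) and (\ref{1.26}), and the monotonicity of $q_{x_0,h}$ to pass from ``$=1$ a.e.'' to ``$\equiv 1$'' in (\ref{1.27}). Two minor remarks: the tree is locally finite, so the product over children is finite and no special care for countable products is needed; and your claim that ``upper semicontinuity passes to the increasing limit'' is not a safe general principle---the paper instead reads the continuity of $q_{x,h}$ on $[h,\infty)$ directly off the fixed-point relation (\ref{1.23}), since each $Q^{\alpha_y}(q_{y,h})$ is continuous.
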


\begin{proof}
From the definition of $Q^\alpha$ in (\ref{1.6}) and the recursion from the boundary (\ref{1.21}), it is immediate that $q^n_{x,h}(\cdot)$ are non-increasing, $[0,1]$-valued functions, equal to $1$ on $(-\infty,h)$, with only possible discontinuity at $h$. When $x \in T_n$, $q^n_{x,h}(\cdot) \le q^{n+1}_{x,h}(\cdot)$ by (\ref{1.21}) and this gets propagated inside $B_n$ by the recursion (\ref{1.21}), so that $q^n_{x,h}(\cdot) \le q^{n+1}_{x,h}(\cdot)$ for $x \in B_n$ (when $T_n = \phi$, actually (\ref{1.22}) holds). Setting $q_{x,h}(a) = \lim_n \uparrow q^n_{x,h}(a)$, we obtain (\ref{1.23}) from (\ref{1.21}) by monotone convergence. It also follows that $q_{x,h}(\cdot)$ is non-increasing $[0,1]$-valued, with value $1$ on $(-\infty,h)$ and only possible discontinuity at $h$ (due to (\ref{1.23}) and (\ref{1.6})). The claim (\ref{1.24}) for finite $T$ is immediate from (\ref{1.22}).

\medskip
Let us now assume that $T$ is transient and prove (\ref{1.25}). We fix $n$ and use induction on $n - |x|$. When $x \in T_n$, then (\ref{1.25}) is immediate from the first line of (\ref{1.21}). When $|x| < n$, then one has the $\IP^G$-a.s. equality
\begin{equation}\label{1.28}
\IP^G\big[x \overset{\mbox{\scriptsize $T_x, \varphi \ge h$} \atop \;}{\longleftrightarrow} \hspace{-4.5ex}\mbox{\scriptsize /}\quad \;  T_n \,| \, \varphi_x\big] = 1_{(-\infty,h)}(\varphi_x) + 1_{[h,\infty)}(\varphi_x) \,\IP^G \Big[\bigcap\limits_{y^- = x} \big\{ y \overset{\mbox{\scriptsize $T_y, \varphi \ge h$} \atop \;}{\longleftrightarrow} \hspace{-4.5ex}\mbox{\scriptsize /}\quad \;  T_n\big\}\,| \,\varphi_x\Big].
\end{equation}

\n
If one first conditions on $\varphi_x$ and $\varphi_y$, for $y^- = x$ it follows from the Markov property (\ref{1.8}) and induction that we have $\IP^G$-a.s.,
\begin{equation*}
\IP^G\Big[\bigcap_{y^- = x} \big\{y \overset{\mbox{\scriptsize $T_y, \varphi \ge h$} \atop \;}{\longleftrightarrow} \hspace{-4.5ex}\mbox{\scriptsize /}\quad \;  T_n\big\} \,| \,\varphi_x\Big]  = \IE^G \big[\pr_{y^- = x} q^n_{y,h}(\varphi_y) \, | \, \varphi_x\big]
\stackrel{(\ref{1.8}),(\ref{1.13})}{=} \pr_{y^- = x} Q^{\alpha_y} (q^n_{y,h})(\varphi_x).
\end{equation*}

\n
Inserting this identity in the last expression of (\ref{1.28}), and using (\ref{1.21}), we see that (\ref{1.25}) holds for $x$ as well. This completes the proof of (\ref{1.25}) by induction. The claim (\ref{1.26}) readily follows by monotone convergence.

\medskip
Finally, let us prove (\ref{1.27}). We know that for $h > h_*$, $\IP^G [x_0 \overset{\varphi \ge h}{\longleftrightarrow} \hspace{-3ex}\mbox{\scriptsize /}\quad \infty] = 1$, cf.~(\ref{1.18}). By (\ref{1.26}) we see that $q_{x_0,h}(\cdot) = 1$ almost everywhere and hence everywhere due to the fact that $q_{x_0,h}(\cdot)$ is non-increasing. This proves (\ref{1.27}) i). On the other hand, for $h < h_*$, $\IP^G[x_0 \overset{\varphi \ge h}{\longleftrightarrow} \hspace{-3ex}\mbox{\scriptsize /}\quad \infty] < 1$, and by (\ref{1.26}), $q_{x_0,h}(\cdot)$ is not identically $1$, whence (\ref{1.27}) ii). This completes the proof of Lemma \ref{lem1.2}.
\end{proof}

\begin{remark}\label{rem1.3} \rm  Note that the recursion (\ref{1.21}) used in the construction of the functions $q^n_{x,h}$ only involves the coefficients $\alpha_y$, for $y \in T_x$. If we write $q^T_{x,h}$ for $q_{x,h}$, with $x \in T$ and $h \in \IR$, to underline the dependence in $T$, it is straightforward to infer from (\ref{1.21}) and the above observation that for all $x \in T$ and $h \in \IR$
\begin{equation}\label{1.29}
q^T_{x,h} = q^{T_x}_{x,h}.
\end{equation}

\n
This identity combined with (\ref{1.27}) will be useful when proving that $h_*$ is almost surely constant for a super-critical Galton-Watson tree conditioned on non-extinction. \hfill $\square$
\end{remark}

We now return to the case where $T$ is a transient tree with root $x_0$, and consider the sub-tree (with same root $x_0$) of vertices with an infinite line of descent
\begin{equation}\label{1.30}
T^\infty = \{x \in T; \,|T_x| = \infty\}.
\end{equation}

\medskip\n
Then, the connected components of $T \backslash T^\infty$ consist of finite sub-trees, and $T^\infty$ is a transient tree with Green's function equal to the restriction of $g(\cdot,\cdot)$ to $T^\infty$, see for instance Proposition 1.11 of \cite{Szni12e}. Thus, the law of $(\varphi_x)_{x \in T^\infty}$ under $\IP^G$ equals the law of the Gaussian free field on $T^\infty$. Note also that for any $h \in \IR$ one has $\{x_0 \stackrel{\varphi \ge h}{\longleftrightarrow} \infty\} = \{x_0 \stackrel{T^\infty,\varphi \ge h}{\longleftrightarrow} \infty\}$ (where this last notation refers to the event that the connected component of $x_0$ in $T^\infty \cap \{\varphi \ge h\}$ is infinite), so that with hopefully obvious notation
\begin{equation}\label{1.31}
h_* (T) = h_*(T^\infty),
\end{equation}

\n
i.e.~the critical values for level-set percolation of the Gaussian free field on $T$ and on $T^\infty$ coincide.

\medskip
We now briefly turn to the topic of random interlacements on $T$ and recall some facts concerning the percolation of the vacant set of random interlacements. We refer to the monographs \cite{CernTeix12} and \cite{DrewRathSapo14c} for further material and references. The vacant set of random interlacements at level $u \ge 0$ on $T$ is a random subset $\cV^u$ of $T$, governed by a probability $\IP^I$, with law characterized by
\begin{equation}\label{1.32}
\IP^I[\cV^u \supseteq K] = \exp\{-u \, {\rm cap}(K)\}, \;\mbox{for any finite $K \subseteq T$}
\end{equation}

\n
(with ${\rm cap}(K)$ the capacity of $K$, see below (\ref{1.3})).

\medskip
As $u$ increases, $\cV^u$ becomes thinner, and to classify the percolative properties of $\cV^u$, one defines $u_*$ as in (\ref{0.5}). Actually, one has regardless of the choice of the base point $x_0$, see Corollary 3.2 of \cite{Teix09b},
\begin{equation}\label{1.33}
u_* = \inf\{ u \ge 0; \, \IP^I [x_0 \stackrel{\cV^u}{\longleftrightarrow} \infty] = 0\} \in [0,\infty].
\end{equation}

\n
One also knows by Theorem 5.1 of \cite{Teix09b} (the bounded degree assumption stated there can be removed) that
\begin{equation}\label{1.34}
\begin{array}{l}
\mbox{the connected component $\cC^{\cV^u}(x_0)$ of $x_0$ in $\cV^u$ has the same law as the} \\
\mbox{open cluster of $x_0$ in an independent site Bernoulli percolation on $T$,}\\
\mbox{for which each site $x \in T$ is open with probability $p_{x,u}$},
\end{array}
\end{equation}
where
\begin{equation}\label{1.35}
\left\{ \begin{split}
p_{x_0,u} = &\;e^{-u \,{\rm cap}(\{x_0\})}, \;\mbox{and for $x \not= x_0$}
\\[1ex]
p_{x,u} = &\; e^{-u \, {\rm deg}(x) \;P_x[d(X_n,x_0) > d(x,x_0), \; {\rm for \; all} \;n > 0] \, \cdot \,P_x[d(X_n,x_0) \ge d(x,x_0), \;{\rm for \; all} \;n \ge 0]}
\end{split}\right.
\end{equation}

\n
Taking into account that ${\rm cap}(\{x_0\}) = g(x_0,x_0)^{-1}$ as well as (\ref{1.9}), (\ref{1.11}), we see that for $u \ge 0$,
\begin{equation}\label{1.36}
p_{x_0,u} = e^{-\frac{u}{R^\infty_{x_0}}} \;  \mbox{and for $x \not= x_0$, $p_{x,u} = e^{-\frac{u}{R^\infty_x (1 + R^\infty_x)}}$} \,.
\end{equation}

\begin{remark}\label{rem1.4} \rm If $T^\infty$ stands for the sub-tree of vertices with an infinite line of descent, see (\ref{1.30}), then with hopefully obvious notation
\begin{equation}\label{1.37}
R_x^\infty(T) = R^\infty_x (T^\infty) \; \mbox{for all $x \in T^\infty$}
\end{equation}

\n
(all components of $T \backslash T^\infty$ are finite, and (\ref{1.37}) can be seen by replacing in the approximation of $R^\infty_x(T)$ below (\ref{1.4}) the set $\{x' \in T_x$; $d(x,x') = N\}$ by the set $\{x' \in T^\infty_x$; $d(x,x') = N\}$). In particular, in view of (\ref{1.36}), we find that with similar notation as in (\ref{1.37}) one has
\begin{equation}\label{1.38}
p_{x,u}(T) = p_{x,u}(T^\infty)\; \mbox{for all $x \in T^\infty$},
\end{equation}
and in view of (\ref{1.33}), (\ref{1.34}),
\begin{equation}\label{1.39}
u_*(T) = u_* (T^\infty),
\end{equation}

\medskip\n
i.e.~the critical values for the percolation of the vacant set of random interlacements on $T$ and on $T^\infty$ coincide. \hfill $\square$
\end{remark}

Let us close this section by mentioning that percolation of $\cV^u$ can be re-expressed in terms of the transience of $T$ endowed with certain weights. More precisely, if one introduces on the edges $e = \{x^-,x\}$, for $x \in T \backslash \{x_0\}$ the weights
\begin{equation}\label{1.40}
c_u(e) = e^{-u \sum\limits_{y \in (x_0,x]}  \, \frac{1}{R_y^\infty (1 + R^\infty_y)}} \; \big(1 - e^{-u \frac{1}{R^\infty_x (1 + R^\infty_x)}  } \big)^{-1},
\end{equation}

\n
then, when $u > 0$ and $R^\infty_x < \infty$ for each $x \in T$, one knows from Theorem 2.1 of \cite{Lyon92}, see also Corollary 5.25 of \cite{LyonPere16}, and (\ref{1.34}), (\ref{1.36}) that
\begin{equation}\label{1.41}
\mbox{$\IP^I[x_0 \stackrel{\cV^u}{\longleftrightarrow} \infty] > 0$ if and only if $T$ endowed with the weights (\ref{1.40}) is transient}.
\end{equation}

\section{Some consequences of the cable methodology}
\setcounter{equation}{0}

In this section we consider the Gaussian free field $\wt{\varphi}$ on the cable system $\wt{T}$ attached to $T$. We use it to infer as an application of the results of \cite{LupuA} and \cite{Szni} the inequality $h_* \le \sqrt{2u_*}$, as well as a coupling, which relates the level sets of the Gaussian free field and the vacant set of random interlacements on $T$, see Corollary \ref{cor2.3}. This coupling will be the main tool in the next section to derive (under suitable assumptions) the strict inequality $h_* < \sqrt{2 u}_*$. In Remark \ref{rem2.4} 2) we also provide an example where $h_*$ and $u_*$ vanish.

\medskip
As in the previous section, $T$ is a transient tree with base point $x_0$, such that each edge has unit conductance. The cable tree $\wt{T}$ is obtained by attaching to each edge $e = \{x,y\}$ of the tree a compact interval with length $\frac{1}{2}$ and endpoints respectively identified to $x$ and $y$. One defines on $\wt{T}$ a continuous diffusion behaving as a standard Brownian motion in the interior of each such segment. It has a continuous symmetric Green function $\wt{g}(z,z')$, $z,z' \in \wt{T}$ with respect to the Lebesgue measure on $\wt{T}$, which extends the Green function $g(\cdot,\cdot)$ of the discrete time walk on $T$, see (\ref{0.1}). We refer to Section 2 of \cite{LupuA}, Section 2 of \cite{Folz14}, and Section 3 of \cite{Zhai} for more details.

\medskip
We now turn to the Gaussian free field on the cable tree $\wt{T}$. On the canonical space $\wt{\Omega}$ of continuous real-valued functions on $\wt{T}$ endowed with the $\sigma$-algebra generated by the canonical coordinates $\wt{\varphi}_z$ (we also sometimes write $\wt{\varphi}(z)$), $z \in \wt{T}$, we denote by $\wt{\IP}^G$ the probability, with corresponding expectation $\wt{\IE}^G$, such that
\begin{equation}\label{2.1}
\begin{array}{l}
\mbox{under $\wt{\IP}^G$, $(\wt{\varphi}_z)_{z \in \wt{T}}$ is a centered Gaussian field with}
\\
\mbox{covariance $\wt{\IE}^G[\wt{\varphi}_z \wt{\varphi}_{z'}] = \wt{g}(z,z')$}.
\end{array}
\end{equation}

\n
In particular, looking at the restriction of $\wt{\varphi}$ to $T$, we see that
\begin{equation}\label{2.2}
\mbox{the law of $(\wt{\varphi}_x)_{x \in T}$ under $\wt{\IP}^G$ is equal to $\IP^G$}.
\end{equation}

\n
An important issue in this context is to establish that $\wt{\IP}^G$-a.s., $\{\wt{\varphi} > 0\}$ only has bounded components in $\wt{T}$. As shown in \cite{LupuA}, see also (1.33) of \cite{Szni}, when this condition holds, then for $u > 0$ one can couple $\{\varphi > \sqrt{2u}\}$ and $\cV^u$ so that $\{\varphi > \sqrt{2u}\} \subseteq \cV^u$, a.s.. It then follows that $h_* \le \sqrt{2u_*}$. 

\medskip
We will introduce a condition, see (\ref{2.3}), which implies the above condition, but also enables us to apply the results of Section 2  of \cite{Szni} (see Corollary 2.5 and Remark 2.6 therein), and construct a strengthened coupling between $\{\varphi > \sqrt{2u}\}$ and $\cV^u$, see (\ref{2.20}) below.

\medskip
We thus introduce the condition, see (\ref{1.4}) for notation,
\begin{equation}\label{2.3}
\mbox{for some $A > 0$, the set $\{x \in T$; $R^\infty_x > A\}$ only has bounded components}.
\end{equation}

\begin{remark}\label{rem2.1} \rm ~

\medskip\n
1) The condition (\ref{2.3}) as we now explain is equivalent to the existence of a sequence of cut-sets $C_n$, $n \ge 1$, and $A > 0$, such that (see above (\ref{1.1}) for notation)
\begin{equation}\label{2.4}
\left\{ \begin{array}{ll}
{\rm i)} & B_{C_n} \subseteq U_{C_{n+1}} \;\mbox{for each $n \ge 1$}
\\[2ex]
{\rm ii)} & \sup\limits_{n \ge 1} \; \sup\limits_{x \in C_n} \; R^\infty_x \le A.
\end{array}\right.
\end{equation}

\medskip\n
Indeed, (\ref{2.4}) readily implies (\ref{2.3}). Conversely, when (\ref{2.3}) holds, one defines $U_1$ consisting of $x_0$ and the points linked to $x_0$ by a path where $R^\infty_x > A$ prior to reaching $x_0$, and sets $C_1 = \partial U_1$. By induction one then defines $U_{n+1}$ as the union of $U_n,C_n$ and the collection of points linked to $C_n$ by a path where $R^\infty_x > A$ prior to reaching $C_n$, and sets $C_{n+1} = \partial U_{n+1}$. Then $B_{C_n} = U_n \cup C_n$, for each $n \ge 1$, and (\ref{2.4}) holds.

\medskip
Let us also mention that when $C_n$ is a sequence of cut-sets as in (\ref{2.4}), then by i)
\begin{equation}\label{2.5}
\mbox{$B_{C_n} \subseteq B_{C_{n+1}}$ and $d(x_0, C_n) \ge n$, for all $n \ge 1$}.
\end{equation}

\medskip\n
2) As a result of the observation above (\ref{1.5}), condition (\ref{2.3}) does not depend on the choice of the base point $x_0$ in $T$. \hfill $\square$
\end{remark}

The main result established in this section comes in the next proposition. Its consequences appear in Corollary \ref{cor2.3}.

\begin{proposition}\label{prop2.2}
Assume that (\ref{2.3}) holds, then
\begin{equation}\label{2.6}
\mbox{$\wt{\IP}^G$-a.s., $\{\wt{\varphi}> 0\}$ only has bounded components in $\wt{T}$}.
\end{equation}
\end{proposition}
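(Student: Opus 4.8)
The plan is to work on the cable tree $\wt{T}$ and to show that the occupied set $\{\wt{\varphi}>0\}$ cannot percolate under hypothesis~(\ref{2.3}). Since (\ref{2.3}) is equivalent to the existence of a sequence of cut-sets $C_n$ as in~(\ref{2.4}), it suffices to show that $\wt{\IP}^G$-a.s. there are infinitely many $n$ for which every cable segment attached to an edge $\{x^-,x\}$ with $x\in C_n$ contains a zero of $\wt{\varphi}$; this would force any connected component of $\{\wt{\varphi}>0\}$ meeting $B_{C_n}$ to stay inside $B_{C_{n+1}}$ (more precisely, to be separated from infinity by the ``ring'' of zeros sitting on the cables over $C_n$), hence be bounded. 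So the combinatorial reduction is: it is enough to prove that, with probability one, infinitely many of the events
\[
D_n=\bigl\{\text{for every } x\in C_n,\ \wt{\varphi} \text{ vanishes somewhere on the cable over }\{x^-,x\}\bigr\}
\]
occur.

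\textbf{Main step: lower bound on $\wt{\IP}^G[D_n]$ uniform in $n$.} Fix a cut-set $C$ with $\sup_{x\in C}R^\infty_x\le A$. Condition on the values $(\wt{\varphi}_x)_{x\in B_C}$ of the field at the vertices of $B_C$; by the Markov property of the cable GFF the restrictions of $\wt{\varphi}$ to the distinct cables are then conditionally independent Brownian bridges between the prescribed endpoint values, plus the harmonic interpolation. On the cable over $\{x^-,x\}$, $x\in C$, the field fails to hit $0$ precisely when a Brownian bridge of length $\tfrac12$ from $\wt\varphi_{x^-}$ to $\wt\varphi_x$ stays positive (or stays negative); using the standard bridge formula this conditional non-vanishing probability is $\exp\{-4\,\wt\varphi_{x^-}^+\wt\varphi_x^+\}$ when both endpoints are $\ge 0$ (and is smaller otherwise). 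The key point is then a second-moment / capacity estimate: using~(\ref{1.15}), i.e. $\sum_{x\in C}g(x_0,x)/R^\infty_x=1$, together with $g(x,x)\le R^\infty_x\le A$ from~(\ref{1.9}), one controls $\sum_{x\in C}\IE^G[\wt\varphi_{x^-}^+\wt\varphi_x^+]$ by a constant depending only on $A$ (the covariances $g(x^-,x)$, $g(x,x)$ are all $\le A$, and the relevant weighted sum over $C$ is a bounded fraction of the total capacity-type mass~$1$). Hence
\[
\wt{\IP}^G\Bigl[\bigcap_{x\in C}\{\wt\varphi \text{ does not vanish on cable over }\{x^-,x\}\}\Bigr]
\le \IE^G\Bigl[\exp\Bigl\{-4\textstyle\sum_{x\in C}\wt\varphi_{x^-}^+\wt\varphi_x^+\Bigr\}\Bigr]\le 1-c(A)
\]
for some $c(A)\in(0,1)$ independent of the cut-set — this is where Lemma~\ref{lem1.1} is used in an essential way. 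Therefore $\wt{\IP}^G[D_n]\ge c(A)>0$ for every $n$.

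\textbf{From uniform positivity to a.s. infinitely often.} A single positive lower bound is not enough, so the plan is to pass to a subsequence of cut-sets that are ``far apart'' and exploit conditional independence. Given the field on $B_{C_n}$, the event $D_{n+k}$ for $k$ large depends only on the field in $T_x\setminus B_{C_n}$ for $x\in C_n$, which is conditionally independent of $\sigma_{B_{C_n}}$; more to the point, for a fixed $x\in C_n$ the non-vanishing of $\wt\varphi$ on the cables over a deeper cut-set restricted to $T_x$ is, conditionally on $\wt\varphi_x$, an event whose probability of failing is bounded above by $1-c(A)$ uniformly. Running this down a rapidly growing sequence $n_j$ and using a conditional Borel–Cantelli argument (or directly a martingale/Lévy $0$–$1$ law argument on the nested $\sigma$-algebras $\sigma_{B_{C_{n_j}}}$), one concludes that $\wt{\IP}^G$-a.s. infinitely many $D_{n_j}$ occur, which gives~(\ref{2.6}).

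\textbf{Expected main obstacle.} The delicate point is the uniform-in-$C$ bound $\wt{\IP}^G[D_n^c]\le 1-c(A)$: one must turn the constraint $\sup_{x\in C}R^\infty_x\le A$, which only bounds the \emph{individual} resistances, into an effective bound on the \emph{joint} law of $(\wt\varphi_{x^-},\wt\varphi_x)_{x\in C}$, controlling how many of these pairs can simultaneously be large and positive. The identity~(\ref{1.15}) is exactly the right tool — it says the ``$1/R^\infty_x$-weighted'' harmonic measure of $C$ from $x_0$ has total mass $1$ — and combining it with $g(x_0,x)\le g(x,x)\le R^\infty_x$ (so $g(x_0,x)/R^\infty_x\le$ a probability weight while $g(x,x)$ stays bounded) is what yields a bound depending only on $A$. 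I would also need to handle the edges of $C$ lying in the finite components of $T\setminus T^\infty$, but by~(\ref{1.31}) and~(\ref{1.39}) one may as well assume $T=T^\infty$, or simply note such finite pieces contribute bounded components anyway.
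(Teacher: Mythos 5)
There is a genuine gap, and it sits precisely at the step you yourself single out as the main one. First, a logical slip: your event $D_n$ is the \emph{intersection} over $x\in C_n$ of the events $\{\mbox{the cable over } \{x^-,x\} \mbox{ contains a zero}\}$, so $D_n^c$ is the \emph{union} of the complements; your displayed estimate, however, bounds $\wt{\IP}^G\big[\bigcap_{x\in C}\{\mbox{no zero on the cable over } \{x^-,x\}\}\big]$ away from $1$, which only shows that with probability $\ge c(A)$ \emph{at least one} cable carries a zero. This does not give $\wt{\IP}^G[D_n]\ge c(A)$. Second, and more seriously, the uniform lower bound $\wt{\IP}^G[D_n]\ge c(A)>0$ is simply false in general: $D_n$ demands a zero on every one of the $|C_n|$ cables, and $|C_n|$ may grow without bound under (\ref{2.3}) (take $T$ the binary tree and $C_n=T_n$). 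Conditionally on the discrete field, a cable whose endpoint values $a,b$ are both positive contains a zero with probability $e^{-4ab}$ (note your bridge formula is inverted: $e^{-4ab}$ is the probability of \emph{hitting} $0$, not of avoiding it), so $\wt{\IP}^G[D_n]$ is of the order of $\IE^G\big[\exp\{-4\sum_{x\in C_n}\varphi_{x^-}^+\varphi_x^+\}\big]$, whose exponent has expectation comparable to $\sum_{x\in C_n}g(x^-,x)\asymp |C_n|\to\infty$ on the binary tree. The identity (\ref{1.15}) controls the weighted sum $\sum_{x\in C_n}g(x_0,x)/R^\infty_x$, hence $\sum_{x\in C_n}g(x_0,x)$, but not $\sum_{x\in C_n}g(x^-,x)$. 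Consequently in this example a.s. only finitely many $D_n$ occur, and the scheme collapses.

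The repair is to demand zeros (equivalently, disconnection) only on the cables that matter, which is what the paper does. Set $\wt{\cZ}_n=\{x\in C_n:\ \wt{\varphi}>0 \mbox{ on } [\wt{x_0,x}]\}$, the cut-set points still joined to $x_0$ inside $\{\wt{\varphi}>0\}$. The capacity-type first-moment bound — the sound part of your plan — applied via (\ref{2.7}), (\ref{2.8}) and (\ref{1.15}) gives $\wt{\IE}^G\big[\sum_{x\in C_n}(1+\wt{\varphi}_{x_0}\wt{\varphi}_x)\,1\{\wt{\varphi}>0 \mbox{ on } [\wt{x_0,x}]\}\big]\le c'$ uniformly in $n$, so by Fatou, a.s. along a subsequence both $|\wt{\cZ}_n|$ and $\sum_{x\in\wt{\cZ}_n}\wt{\varphi}_x$ stay below some random level $M$. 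One then only needs to kill these at most $M$ survivors: conditionally on the field along $\bigcup_{x\in C_n}[\wt{x_0,x}]$, the probability that every child of every point of $\wt{\cZ}_n$ receives a negative value is at least $c(M)^M$ (after first reducing, by the midpoint/scaling trick, to cut-sets whose points have a single descendant), and a conditional Borel--Cantelli argument yields $\wt{\cZ}_n=\emptyset$ eventually, hence the component of $x_0$ is bounded; base-point independence of (\ref{2.3}) then handles all other components. Your conditional Borel--Cantelli idea is in the right spirit, but it must be run on this thinned family of at most $M$ cables, not on all cables over the cut-set.
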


\begin{proof}
For $x,y$ in $T$, we write $[\wt{x,y}]$ for the geodesic segment in $\wt{T}$ between $x$ and $y$. One has the following identities, which are consequences of the strong Markov property of $(\wt{\varphi}_z)_{z \in \wt{T}}$, see Lemma 3.1 and Proposition 5.2 of \cite{LupuA}: for $x \in T$,
\begin{align}
&\mbox{$\wt{\IP}^G\big[\wt{\varphi}$ does not vanish on $[\wt{x_0,x}]\big] = \mbox{\f $\dis\frac{2}{\pi}$} \;\arcsin \Big(\mbox{\f $\dis\frac{g(x_0,x)}{\sqrt{g(x_0,x_0)  g(x,x)}}$}\Big)$}, \label{2.7}
\\[2ex]
&\mbox{$\wt{E}^G\big[\wt{\varphi}_{x_0}\, \wt{\varphi}_x, \wt{\varphi}$ does not vanish on $[\wt{x_0,x}]\big] = g(x_0,x)$} \label{2.8}
\end{align}
(and the notation in (\ref{2.8}) refers to the product with the indicator function of the event following the comma).

\medskip\n
We consider $A > 0$ and a sequence of cut-sets $C_n$, $n \ge 1$, as in (\ref{2.4}). We will first work under the additional assumption that
\begin{equation}\label{2.9}
\mbox{$d_x = 1$, for all $x \in C_n$ and $n \ge 1$}
\end{equation}

\medskip\n
(where $d_x$ stands for the number of descendents of $x$ in $T$, see the beginning of Section 1). We will then treat the general case. We thus assume (\ref{2.9}) and define for each $n \ge 1$,
\begin{equation}\label{2.10}
\wt{\cZ}_n = \{ x \in C_n; \, \wt{\varphi} > 0 \; \mbox{on} \; [\wt{x_0,x}]\}.
\end{equation}

\n
In what follows, constants will possibly depend on $d_{x_0}$, $R^\infty_{x_0}$, $A$, and will change from line to line. Additional dependence will appear in the notation. By (\ref{1.9}), (\ref{1.10}) and (\ref{2.4}) ii) and (\ref{2.9}), we see that
\begin{equation}\label{2.11}
c \le g(x,x) \le c' \;\mbox{for} \; x = x_0 \;\mbox{and} \; x \in \textstyle{\bigcup\limits_{n \ge 1}} C_n .
\end{equation}
Thus, by (\ref{2.7}) and (\ref{2.8}), we see that for $n \ge 1$,
\begin{equation}\label{2.12}
\begin{split}
\wt{\IE}^G \Big[\dsl_{x \in C_n} (1 + \wt{\varphi}_{x_0} \,\wt{\varphi}_x) \;1\{\wt{\varphi} > 0 \; \mbox{on} \; [\wt{x_0,x}]\}\Big] & \le c \dsl_{x \in C_n} g(x_0,x)
\\[1ex]
&\!\!\!\!\stackrel{(\ref{2.4})\, {\rm ii)}}{\le} c' \dsl_{x \in C_n} g(x_0, x) \;\mbox{\f $\dis\frac{1}{R^\infty_x}$} \stackrel{(\ref{1.15})}{=} c'.
\end{split}
\end{equation}

\n
As a result, it follows from Fatou's lemma that
\begin{equation}\label{2.13}
\wt{\IE}^G\big[\liminf_n \dsl_{x \in C_n} (1 + \wt{\varphi}_{x_0} \,\wt{\varphi}_x) \,1 \{\wt{\varphi} > 0 \;\mbox{on} \; [\wt{x_0,x}]\}\big] \le c'.
\end{equation}

\n
This bound implies that the event 
\begin{equation*}
\textstyle{\bigcup\limits_{L \ge 1}}\, \limsup\limits_n \Big\{ | \wt{\cZ}_n| \le L, \; | \wt{\varphi}_{x_0} | \dsl_{x \in \wt{\cZ}_n} \wt{\varphi}_x \le L\Big\}
\end{equation*}
has full $\wt{\IP}^G$-probability. Since $|\wt{\varphi}_{x_0}| > 0$, $\wt{\IP}^G$-a.s., we find that
\begin{equation}\label{2.14}
\wt{\IP}^G \Big[ \textstyle{\bigcup\limits_{M \ge 1}} \; \limsup\limits_n \Big\{| \wt{\cZ}_n | \le M \;\mbox{and} \; \dsl_{x \in \wt{\cZ}_n} \varphi_x \le M\Big\}\Big] = 1.
\end{equation}

\n
Note that for any $0 < \alpha \le 1$, we have for any $0 \le a \le M$ (see (\ref{1.6}) for notation)
\begin{equation}\label{2.15}
\begin{split}
Q^\alpha 1_{(-\infty,0)} (a) & = P^Y[\alpha a + \sqrt{\alpha} \;Y < 0] = P^Y[Y < - \sqrt{\alpha}\,a]
\\
&\!\!\!\!\!\!\!\stackrel{a \le M, \alpha \le 1}{\ge} P^Y [Y < - M] = c(M).
\end{split}
\end{equation}

\n
We now introduce for $n, M \ge 1$ the event $A_{M,n} = \{\sum_{x \in \wt{\cZ}_n} (1 + \wt{\varphi}_x) \le M\}$ as well as the $\sigma$-algebra
\begin{equation}\label{2.16}
\wt{\cF}_n = \sigma \Big(\wt{\varphi}_z, z \in \textstyle{\bigcup\limits_{x \in C_n}} [\wt{x_0,x}]\Big).
\end{equation}

\n
It now follows from the Markov property of $\wt{\varphi}$, see (1.8) of \cite{Szni}, that on $A_{M,n}$
\begin{equation}\label{2.17}
\begin{split}
\wt{\IP}^G [ | \wt{\cZ}_{n+1}| = 0 \,| \, \wt{\cF}_n] & \ge \wt{\IP}^G [\wt{\varphi}_y < 0, \;\mbox{for all $y \in T$ with $y^- \in \wt{\cZ}_n\,| \, \wt{\cF}_n]$}
\\
& = \pr\limits_{x \in \wt{\cZ}_n} \; \pr\limits_{y^- = x} Q^{\alpha_y} (1_{(-\infty,0)})(\wt{\varphi}_x) \stackrel{(\ref{2.9}),(\ref{2.15})}{\ge} c(M)^M (\mbox{on} \; A_{M,n}).
\end{split}
\end{equation}

\n
By Borel-Cantelli's lemma it then follows that
\begin{equation*}
\mbox{$\wt{\IP}^G$-a.s., on $\limsup\limits_n A_{M,n}$, $|\wt{\cZ}_k| = 0$ for large $k$}.
\end{equation*}

\n
Since $\wt{\IP}^G [\bigcup_{M \ge 1} \limsup\limits_n A_{M,n}] = 1$ by (\ref{2.14}), we find that
\begin{equation*}
\mbox{$\wt{\IP}^G$-a.s., $|\wt{\cZ}_n| = 0$, for large $n$}.
\end{equation*}

\n
We have thus shown that under (\ref{2.9})
\begin{equation}\label{2.18}
\mbox{$\wt{\IP}^G$-a.s., the connected component of $x_0$ in $\{\wt{\varphi} > 0\}$ is bounded}.
\end{equation}

\medskip\n
We will now remove assumption (\ref{2.9}). In essence, we use a scaling argument. We denote by $T^*$ the tree with vertex set consisting of $T$ and the mid-points of the intervals in $\wt{T}$ linking neighboring vertices in $T$, and edges of unit weight linking each mid-point to the two end-points of the interval where it lies. If $\wt{T}^*$ denotes the corresponding cable graph, there is a natural bijection $s$ from $\wt{T}^*$ onto $\wt{T}$, which, in essence, ``scales by $\frac{1}{2}$'' each interval linking neighbors in $T^*$. The effective resistance between two points in $\wt{T}^*$ is then twice the effective resistance between their images in $\wt{T}$. Then, looking at Green functions, $\wt{g}^* (z^*_1,z^*_2) = 2 \wt{g} (s (z_1^*)$, $s(z_2^*))$, for $z^*_1,z_2^*$ in $\wt{T}^*$. It now follows that $(\sqrt{2} \,\wt{\varphi}_{s(z^*)})_{z^* \in \wt{T}^*}$ under $\wt{\IP}^G$ has the same law as the Gaussian free field on $\wt{T}^*$.

\medskip
Now consider a sequence $C_n \subseteq T$, $n \ge 1$, as in (\ref{2.4}), and denote by $C^*_n \subseteq T^*$, the collection of mid-points $x_*$ of the intervals attached to $\{x,x^-\}$, with $x \in C_n$, for $n \ge 1$. Note that $R^\infty_{x^*}(T^*) = 1 + 2 R^\infty_x(T)$, for such $x$ and $x_*$, and $C^*_n$, $n \ge 1$ is a sequence of cut-sets of $T^*$ satisfying (\ref{2.4}) with $1 + 2A$ in place of $A$. Moreover, (\ref{2.9}) holds for $C^*_n$, $n \ge 1$. By (\ref{2.18}) we see that $\wt{\IP}^G$-a.s. the connected component of $x_0$ in $\{z^* \in \wt{T}^*$; $\sqrt{2} \, \wt{\varphi}_{s(z^*)} > 0\}$ is bounded. This proves that (\ref{2.18}) holds under (\ref{2.3}). 

\medskip
Now, as observed in Remark \ref{rem2.1} 2), (\ref{2.3}) remains true for any choice of the base point $x_0$. Hence, under (\ref{2.3}), $\wt{\IP}^G$-a.s. the connected components of all $x \in T$ in $\{\wt{\varphi} > 0\}$ are bounded. This implies (\ref{2.6}) and concludes the proof of Proposition \ref{prop2.2}. 
\end{proof}

We can now apply the results of \cite{Szni}.
\begin{corollary}\label{cor2.3}
Assume that (\ref{2.3}) holds, then
\begin{equation}\label{2.19}
0 \le h_* \le \sqrt{2u}_* .
\end{equation}

\medskip\n
Moreover, for any $u > 0$, one can couple independent copies $(\varphi_x)_{x \in T}$ and $\cV^u$ of the Gaussian free field on $T$ and the vacant set of random interlacements at level $u$ on $T$, with $(\eta_x)_{x \in T}$ a Gaussian free field on $T$, so that
\begin{equation}\label{2.20}
\mbox{for all $B \subseteq (0,\infty)$, $\{x \in T$; $\eta_x \in \sqrt{2u} + B\} \subseteq \{x \in T$; $\varphi_x \in B\} \cap \cV^u$}.
\end{equation}
\end{corollary}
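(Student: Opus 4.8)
The plan is to derive both assertions of Corollary \ref{cor2.3} directly from Proposition \ref{prop2.2} by invoking the cable-graph isomorphism theory of \cite{LupuA} and its refinement in \cite{Szni}. Since Proposition \ref{prop2.2} guarantees that under (\ref{2.3}) the set $\{\wt\varphi > 0\}$ has only bounded components in $\wt T$ almost surely, the hypothesis needed in those references is in force, and the main work is to quote the right statements and transfer conclusions back from the cable tree $\wt T$ to the discrete tree $T$.

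\medskip
\textbf{Step 1: the inequality (\ref{2.19}).} First I would recall that by \cite{LupuA} (see also (1.33) of \cite{Szni}), once $\{\wt\varphi>0\}$ has only bounded components, then for every $u>0$ one can construct a coupling of the cable-system Gaussian free field $\wt\varphi$ and random interlacements at level $u$ such that $\{\varphi > \sqrt{2u}\} \subseteq \cV^u$ on $T$, $\IP$-a.s. Consequently, if $h > \sqrt{2u_*}$, choosing $u$ with $\sqrt{2u_*} < \sqrt{2u} < h$ gives $\{\varphi \ge h\} \subseteq \{\varphi > \sqrt{2u}\} \subseteq \cV^u$; since $u > u_*$, all components of $\cV^u$ are finite by (\ref{0.5}), so all components of $\{\varphi \ge h\}$ are finite, whence $h \ge h_*$ by (\ref{0.3}). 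Letting $h \downarrow \sqrt{2u_*}$ yields $h_* \le \sqrt{2u_*}$, and combined with (\ref{1.19}) (or Proposition \ref{propA2}) this gives (\ref{2.19}).

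\medskip
\textbf{Step 2: the strengthened coupling (\ref{2.20}).} Here I would appeal to Corollary 2.5 and Remark 2.6 of \cite{Szni}, whose hypothesis is precisely that $\{\wt\varphi>0\}$ has only bounded components on the cable tree — supplied by Proposition \ref{prop2.2}. Those results produce, from a single Gaussian free field $\eta$ on $\wt T$ (restricted to $T$), a decomposition exhibiting an independent interlacement $\cV^u$ and an independent Gaussian free field $\varphi$ on $T$ such that, sitewise, $\eta_x \ge \sqrt{2u}$ forces $x \in \cV^u$, and the "overshoot" $\eta_x - \sqrt{2u}$ agrees with $\varphi_x$ on the event $\{\eta_x > \sqrt{2u}\} \cap \cV^u$ (this is the content of the refinement over the plain Lupu coupling). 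Reading off the sitewise implication: if $\eta_x \in \sqrt{2u}+B$ with $B \subseteq (0,\infty)$, then in particular $\eta_x > \sqrt{2u}$, hence $x \in \cV^u$, and on that event $\varphi_x = \eta_x - \sqrt{2u} \in B$, giving (\ref{2.20}). I would then note that the independence claims and the fact that $\varphi$, $\cV^u$ have the stated marginals are all part of the cited statements in \cite{Szni}.

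\medskip
The main obstacle is not really mathematical but bookkeeping: one must check that the version of the coupling in Section 2 of \cite{Szni} is stated under exactly the hypothesis "$\{\wt\varphi>0\}$ has only bounded components" (rather than some stronger graph-theoretic assumption tailored to $(d+1)$-regular trees), so that Proposition \ref{prop2.2} genuinely suffices; and that the sitewise containment in the reference is in the form needed, with the strict-positivity shift $\sqrt{2u}+B$, $B\subseteq(0,\infty)$, matching the overshoot identity. Once the precise statements are lined up, both parts of the corollary follow by the two short arguments above.
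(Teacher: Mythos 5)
Your proposal is correct and follows essentially the same route as the paper: (\ref{2.19}) from the plain Lupu coupling $\{\varphi>\sqrt{2u}\}\subseteq\cV^u$ once Proposition \ref{prop2.2} gives bounded sign components, and (\ref{2.20}) from Corollary 2.5 and Remark 2.6 of \cite{Szni}. The only ``bookkeeping'' point you flag but do not resolve is resolved in the paper in one line: besides the bounded-components condition, Corollary 2.5 of \cite{Szni} requires a uniform Green function bound ((1.43) there), which follows from (\ref{2.3}) via $g(x,x)\le R^\infty_x$ (i.e.~(\ref{1.9})).
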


\begin{proof}
Since $g(x,x) \le R^\infty_x$, cf.~(\ref{1.9}), condition (\ref{2.3}) ensures that (1.43) of \cite{Szni} holds. Moreover, Proposition \ref{prop2.2} shows that condition (\ref{1.32}) of \cite{Szni} holds as well. The claims follow from Corollary 2.5 and Remark 2.6 of \cite{Szni}. Actually, (\ref{2.19}) follows from (\ref{2.6}) alone by the argument of \cite{LyonPere16}, see also (1.33) of \cite{Szni}.
\end{proof}

\begin{remark}\label{rem2.4} \rm ~

\medskip\n
1) Note that an infinite self-avoiding path in $T$ starting at the root $x_0$ only visits the sub-tree $T^\infty$ of vertices with an infinite line of descent. Since $R^\infty_x(T) = R^\infty_x(T^\infty)$ for all $x \in T^\infty$, cf.~(\ref{1.37}), we thus see that
\begin{equation}\label{2.21}
\mbox{condition (\ref{2.3}) holds for $T$ if and only if (\ref{2.3}) holds for $T^\infty$}.
\end{equation}

\medskip\n
2) As the present example shows, it is possible that all terms coincide in (\ref{2.19}). For instance, consider a tree $T$ with root $x_0$, such that $d_x = 1 \vee |x|$, for all $x \in T$. This tree is transient (it contains a binary tree rooted at the descendents of $x_0$). As we now explain, for this tree one has
\begin{equation}\label{2.22}
0 = h_* = \sqrt{2u_*}.
\end{equation}

\n
To this end, note that for all $y \in T$, $R_y^\infty \le R^\infty_{x_0} < \infty$. Thus, one finds that for all $x$ in $T$ (with $c$ a positive constant changing from place to place)
\begin{equation*}
\mbox{\f $\dis\frac{1}{R^\infty_x}$} = \dsl_{y^- = x} \; \mbox{\f $\dis\frac{1}{1 +R^\infty_y}$} \ge d_x \big(1 + R^\infty_{x_0})^{-1} \ge c\, |x|,
\end{equation*}
so that
\begin{equation}\label{2.23}
\mbox{\f $\dis\frac{1}{R^\infty_x (1 + R^\infty_x)}$} \ge c\, |x|.
\end{equation}

\medskip\n
If for $n \ge 1$ we set $C_n = \{x \in T; |x| = n\}$, we obtain a sequence of cut-sets of $T$ with $|C_n| \le n^n$, for which (\ref{2.4}) holds. It then follows that $0 \le h_* \le \sqrt{2u_*}$. Moreover, for any $u > 0$ we have for large $n$
\begin{equation}\label{2.24}
\dsl_{x \in C_n} \; e^{-u \sum_{y \in (x_0,x]} \;\frac{1}{R^\infty_y(1 + R^\infty_y)} } \big(1 - e^{-\frac{u}{R^\infty_x(1 + R^\infty_x)} }\big)^{-1}   \le n^n \,e^{-u  c' n^2} (1 - e^{-u c n})^{-1} \underset{n \r \infty}{\longrightarrow} 0.
\end{equation}

\medskip\n
This implies that $T$ endowed with the weights $c_u(e)$ in (\ref{1.40}) is a recurrent weighted graph (see for instance Corollary 4.2 of \cite{Lyon90}). By (\ref{1.41}), this implies that $\IP^I[x_0 \stackrel{\!\!\!\!\!\!\cV^u}{\longleftrightarrow \infty}] = 0$, and therefore that $u_* \le u$. Since $u > 0$ is arbitrary, we see that $u_* = 0$ and (\ref{2.22}) follows.

\medskip\n
3) In light of the above example one may still wonder whether $\IP^G [x_0 \stackrel{\varphi \ge 0}{\longleftrightarrow} \infty] >0$ holds under (\ref{2.3}). As we now briefly explain this issue is closely connected to a question concerning the geometry of the sign-clusters of $\wt{\varphi}$ the Gaussian free field on the cable system. 

\medskip\n
We first explain how the positivity of $\IP^G [x_0 \stackrel{\varphi \ge 0}{\longleftrightarrow} \infty] $ can be expressed in terms of the sign clusters of $\wt{\varphi}$. Observe that the law of $(sign(\varphi_x))_{x \in T}$ under $\IP^G$ can be generated by first considering the connected components of $\{|\wt{\varphi}|> 0\}$ that meet $T$ (they are $\wt{\IP}^G$-a.s. bounded by (\ref{2.6})), and then by drawing independent symmetric random signs for each of these components. Such a representation can for instance be deduced from the strong Markov property of  $\wt{\varphi}$, combined with an exploration starting from $x_0$ of the successive components of $\{|\wt{\varphi}|> 0\}$ that meet $T$, see also Lemma 3.1 of \cite{LupuA}. 

\medskip\n
Then, denote by $T'$ the random tree obtained by collapsing the sites of $T$ that belong to a same component of $\{|\wt{\varphi}|> 0\}$. From the above representation, the positivity of $\IP^G [x_0 \stackrel{\varphi \ge 0}{\longleftrightarrow} \infty]$ means that with positive $\wt{\IP}^G$-measure, the Bernoulli site percolation with parameter $1/2$ on $T'$ percolates (incidentally, this is indeed the case in the example in 2) above, due to the massive branching of $T'$ in this example). 

\medskip\n
From the above and Theorem 6.2 of \cite{Lyon90}, see also Theorem 5.15 of \cite{LyonPere16}, the positivity of $\IP^G [x_0 \stackrel{\varphi \ge 0}{\longleftrightarrow} \infty] $ now implies that with positive $\wt{\IP}^G$-measure, the so-called branching number of $T'$ (measuring the growth of $T'$) is at least $2$.

\medskip\n
Thus, as a companion to the above question concerning the positivity of $\IP^G [x_0 \stackrel{\varphi \ge 0}{\longleftrightarrow} \infty]$ under (\ref{2.3}), one may also wonder whether under (\ref{2.3})  the branching number of the random tree $T'$ is necessarily bigger or equal to $2$.

 \hfill $\square$
\end{remark}

\section{A sufficient condition for $h_* < \sqrt{2u}_*$}
\setcounter{equation}{0}

In this section, we introduce two conditions, cf.~(\ref{3.1}), (\ref{3.2}) below, and we show in Theorem \ref{theo3.3} that when $u_*$ is non-degenerate these conditions imply that $h_* < \sqrt{2u}_*$. An important step is contained in Proposition \ref{prop3.2}, where an exponential decay of the point to root connection probability in $\{\varphi \ge 0\}$ is derived. As in the previous section, $T$ is a transient tree with root $x_0$, and $T^\infty$ stands for the sub-tree of vertices with an infinite line of descent, cf.~(\ref{1.30}).

\medskip
We now introduce the two conditions mentioned above. The first condition states that
\begin{align}
&\mbox{there exists $A, M, \delta > 0$ such that for large $n$ and all $x \in T^\infty$ with $|x| = n$},\label{3.1}
\\
&\dsl_{y \in (x_0,x)} 1\{R^\infty_y \le A, \, d_{y^-} \le M\} \ge \delta n \nonumber
\intertext{(recall the beginning of Section 1 for notation). The second condition is:}
&\mbox{there exists $B > 0$ such that for large $n$ and all $x \in T^\infty$ with $|x| = n$},\label{3.2}
\\
&\dsl_{y \in (x_0,x]} \; \mbox{\f $\dis\frac{1}{R^\infty_y(1 + R^\infty_y)}$} \le B n. \nonumber
\end{align}

\begin{remark}\label{rem3.1} \rm ~

\medskip\n
1) Note that when (\ref{3.1}) holds, $\{y \in T, R^\infty_y > A\}$ cannot contain an infinite geodesic path in $T$, so that (\ref{3.1}) implies (\ref{2.3}). In particular Corollary \ref{cor2.3} holds as a consequence of (\ref{3.1}).

\bigskip\n
2)  As a result of the observations made above (\ref{1.1}) and below (\ref{1.4}) concerning the effect of moving the location of the root, one sees that when (\ref{3.1}) holds relative to $x_0$, it will also hold relative to any different root $x'_0$ with possibly different $A',M',\delta' > 0$. The same observation applies to condition (\ref{3.2}). \hfill $\square$

\end{remark}

\medskip
Our first main result consists in the derivation of an upper bound showing the exponential decay of $\IP^G[x_0 \stackrel{\varphi \ge 0}{\longleftrightarrow} x]$ for large $x$ in $T^\infty$, when (\ref{3.1}) holds. In essence we will use a strategy of ``entropic repulsion'' to prove this exponential decay, see \cite{Giac03}, p.~13, 14, with however a special twist, see Remark \ref{rem3.3}. 

\begin{proposition}\label{prop3.2}
Assume (\ref{3.1}). Then, there exists $\kappa(A,M,\delta) > 0$ such that
\begin{equation}\label{3.3}
\mbox{for large $n$ and all $x \in T^\infty$ with $|x| = n$, $\IP^G [x_0 \stackrel{\varphi \ge 0}{\longleftrightarrow} x] \le 2 e^{-\kappa n}$}.
\end{equation}
\end{proposition}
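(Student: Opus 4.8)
The plan is to exploit the exact description of the conditional law of $\varphi$ restricted to a geodesic segment. Fix $x \in T^\infty$ with $|x| = n$ large, and let $x_0 = y_0, y_1, \dots, y_n = x$ be the vertices on the geodesic path from $x_0$ to $x$. On the event $\{x_0 \stackrel{\varphi \ge 0}{\longleftrightarrow} x\}$ we must in particular have $\varphi_{y_j} \ge 0$ for all $0 \le j \le n$. Iterating the Markov property (\ref{1.13}), the law of $(\varphi_{y_0}, \dots, \varphi_{y_n})$ under $\IP^G$ is an explicit Markov chain: $\varphi_{y_0} \sim N(0, g(x_0,x_0))$ and, given $\varphi_{y_{j-1}} = a$, the variable $\varphi_{y_j}$ is distributed as $\alpha_{y_j} a + \sqrt{\alpha_{y_j}}\, Y$ with $Y$ standard normal, where $\alpha_{y_j} = R^\infty_{y_j}/(1 + R^\infty_{y_j})$. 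So it suffices to bound the probability that this chain stays non-negative through step $n$.

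The key point is that each index $y_j$ with $R^\infty_{y_j} \le A$ and $d_{y_j^-} \le M$ — of which there are at least $\delta n$ by (\ref{3.1}) — contributes a genuine ``contraction'' factor $\alpha_{y_j} \le A/(1+A) =: \bar\alpha < 1$, and it is at these steps that entropic repulsion bites: if the chain is conditioned to stay above $0$ at step $j$, the conditional mean of $\varphi_{y_j}$ given $\varphi_{y_{j-1}} = a \ge 0$ is $\alpha_{y_j} a$, which, because $\alpha_{y_j}$ is bounded away from $1$, tends to pull the walk down unless $a$ is large; but keeping $a$ large is itself costly. Concretely, I would prove a one-step lemma: there is $\rho = \rho(A) < 1$ such that for every $a \ge 0$ and every $\alpha \le \bar\alpha$,
\begin{equation*}
P^Y\big[\alpha a + \sqrt{\alpha}\,Y \ge 0\big] \cdot \sup_{b \ge 0} \frac{\text{(density weight at } b)}{\cdots} \le \rho,
\end{equation*}
or more cleanly, that conditioning on $\{\varphi_{y_j} \ge 0\}$ at a ``good'' step multiplies the relevant survival quantity by at most $\rho$. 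The honest way to package this is to track a supermartingale-type functional: for instance bound $\IP^G[\varphi_{y_0} \ge 0, \dots, \varphi_{y_n} \ge 0]$ by inserting, at each good step, the estimate $P^Y[\alpha a + \sqrt\alpha Y \ge 0] \le \Phi(\sqrt{\bar\alpha}\,a)$ and comparing with the unconditioned Gaussian tail, absorbing the possible growth of $a$ into the Gaussian cost $e^{-a^2/(2\sigma^2)}$ of having reached height $a$ in the first place. The upshot should be a bound of the form $\IP^G[x_0 \stackrel{\varphi\ge 0}{\longleftrightarrow} x] \le \IP^G[\varphi_{y_j}\ge 0,\ 0\le j\le n] \le C\, \rho^{\,\#\{\text{good }j\}} \le C \rho^{\delta n}$, and then $\kappa = \tfrac12 \delta \log(1/\rho)$ with the factor $2$ absorbing the constant $C$ for $n$ large.

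I expect the main obstacle to be making the entropic-repulsion bookkeeping rigorous without an a priori upper bound on the $R^\infty_y$ at the ``bad'' steps: at a step with $R^\infty_y$ large, $\alpha_y$ is close to $1$ and $\varphi_{y_j}$ barely moves, so those steps are essentially free (they neither help nor hurt, since $Q^{\alpha}1_{[0,\infty)}(a) \le 1$ trivially and, for $a \ge 0$, even $Q^\alpha 1_{[0,\infty)}(a) = P^Y[Y \ge -\sqrt\alpha a] \ge 1/2$), and one has to check they do not let the walk drift up ``for free'' in a way that defeats the contraction at the good steps. The clean resolution is to note that every step with $a \ge 0$ satisfies the pointwise bound $Q^\alpha 1_{[0,\infty)}(a) \le 1$, so bad steps can simply be dropped from the product, while at a good step one uses a strict bound that is uniform over $a\ge 0$ once one also accounts for the Gaussian cost already paid; the device of choosing the test functional to be the product of indicators weighted by $e^{\lambda \varphi_{y_j}}$ for a small $\lambda>0$ (an exponential tilt, in the spirit of \cite{Giac03}) is what converts ``the walk might be high, which is good for future survival but expensive now'' into a clean single-step contraction. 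Finally I would quote that having $R^\infty_{y_j} \le A$ forces $g(y_j,y_j)$ and $\alpha_{y_j}$ to lie in a fixed compact subinterval of $(0,1]$ by (\ref{1.9}), so all constants depend only on $A, M, \delta$ as claimed.
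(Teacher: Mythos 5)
Your reduction to the one--dimensional Markov chain $(\varphi_{y_j})_{0\le j\le n}$ along the geodesic, with transitions $\varphi_{y_j}=\alpha_{y_j}\varphi_{y_{j-1}}+\sqrt{\alpha_{y_j}}\,Y$, is correct (this is exactly (\ref{1.13})), and you have correctly located the difficulty: at a single good step there is no contraction uniform in $a\ge 0$, since $\sup_{a\ge0}Q^{\bar\alpha}1_{[0,\infty)}(a)=1$, and during a stretch of bad steps ($\alpha_y$ close to $1$) the chain behaves like a random walk and can reach height $\sqrt{n}$ at only polynomial cost. But your proposed resolution is internally inconsistent, and the estimate that would resolve it is never stated or proved. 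With the plain product of indicators, bad steps can indeed be dropped ($Q^\alpha 1_{[0,\infty)}\le 1$) but good steps then yield no gain. With the tilted functional $\prod_j 1\{\varphi_{y_j}\ge0\}\,e^{\lambda\varphi_{y_j}}$ (relative to which a good step does give a one--step factor $\rho(\lambda)<1$, uniformly over $a\ge0$ and $\alpha\le\bar\alpha$), a bad step contributes a factor up to $e^{\lambda^2\alpha/2}\le e^{\lambda^2/2}>1$ and can no longer "simply be dropped." The whole proof then hinges on the quantitative trade--off that $1-\rho(\lambda)$ is of strictly larger order than $\lambda^2$ as $\lambda\to0$ (one can check $1-\rho(\lambda)\gtrsim c(A)\,\lambda$), so that for $\lambda$ small one gets $e^{\lambda^2 n/2}\rho(\lambda)^{\delta n}\le e^{-\kappa n}$. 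This computation is the heart of the argument; your proposal neither formulates the one--step lemma precisely nor identifies that the bad steps carry a cost that must be beaten, so as written there is a genuine gap.

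For comparison, the paper avoids any supermartingale/tilting along the chain. It extracts from (\ref{3.1}) well--separated good sites, forms the disjoint sets $J_x$ (good sites and their grandparents, where $R^\infty\le A+2$) and $K_x$ (the parents, where $d_y\le M$), and splits $\{x_0\stackrel{\varphi\ge0}{\longleftrightarrow}x\}$ according to the event $F_x$ that many $y\in K_x$ have both neighbours' field values $\le 1$. On $F_x$, conditioning on $\sigma_{J_x}$ makes the $(\varphi_y)_{y\in K_x}$ independent with conditional mean $\le 1$ and conditional variance $\ge(1+M)^{-1}$ (this is where $d_{y^-}\le M$ enters), giving a product of factors $<1$. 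Off $F_x$, one has $\sum_{y\in J_x}\varphi_y\ge\frac{\delta}{12}n$, and the crucial point (Remark \ref{rem3.3}) is that ${\rm var}(\sum_{J_x}\varphi_y)$ grows only linearly in $n$ because $R^\infty_y\le A+2$ on $J_x$ forces geometric decay of correlations there --- whereas the variance over the full path may grow superlinearly, which is precisely the same "free upward drift on bad stretches" phenomenon you flagged. If your tilting route were carried out, it would be a genuinely different proof and would in fact only use $R^\infty_y\le A$ at the good sites, not the degree bound $d_{y^-}\le M$; but to count as a proof it must contain the one--step estimate and the $\lambda$--optimisation described above.
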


\begin{proof}
For $x \in T$ with $|x| = n$, we write $x_0,x_1,\dots,x_n$ for the geodesic path in $T$ from $x_0$ to $x$. Looking at (\ref{3.1}), we see that when $x \in T^\infty$ and $|x|$ is large, one of the two sums corresponding to $y \in (x_0,x)$ with $y = x_k$, $k$ even, or $k$ odd, is at least $\frac{\delta}{2} \,|x|$. Hence, we can find $n_0 \ge 10$, such that for all $x \in T^\infty$ with $|x| \ge n_0$,
\begin{equation}\label{3.4}
\begin{array}{l}
\mbox{there is a subset $I_x \subseteq [x_2,x)$ with $|I_x| \ge \mbox{\f $\dis\frac{\delta}{3}$} \;n$, and $y \not= y' \in I_x \Longrightarrow d(y,y') \ge 2$}, 
\\[1ex]
\mbox{and for each $y \in I_x$, $R^\infty_y \le A$ and $d_{y^-} \le M$}.
\end{array}
\end{equation}
Further, note that for $x \in T^\infty$ with $|x| = n \ge n_0$
\begin{equation}\label{3.5}
\mbox{for all $y \in I_x$ and $y' = (y^-)^-$ (so $|y'| = |y| - 2), R^\infty_{y'} \le A + 2$}.
\end{equation}

\medskip\n
Then, for $x$ in $T^\infty$, with $|x| \ge n_0$, we define the subsets in $[x_0,x]$
\begin{equation}\label{3.6}
J_x = I_x  \cup \{(y^-)^-; y \in I_x\} \; \mbox{and} \; K_x = \{y^-; y \in I_x\}.
\end{equation}

\medskip\n
The sites in $I_x$ are at mutual distance at least $2$, and we thus see that for $x \in T^\infty$, with $|x| = n \ge n_0$
\begin{equation}\label{3.7}
\left\{ \begin{array}{rl}
{\rm i)} & J_x \cap K_x = \phi, \; |J_x| \ge \mbox{\f $\dis\frac{\delta}{3}$}\;n, \; |K_x| \ge \mbox{\f $\dis\frac{\delta}{3}$}\; n,
\\[1ex]
{\rm ii)} & \mbox{for all $y \in J_x$, $R^\infty_y \le A + 2$},
\\[1ex]
{\rm iii)} & \mbox{for all $y \in K_x$, both neighbors of $y$ in $[x_0,x]$ belong to $J_x$},
\\[1ex]
{\rm iv)} & \mbox{for all $y \in K_x$, $d_y \le M$}.
\end{array}\right.
\end{equation}

\medskip\n
We now use a strategy in the spirit of the proof of ``entropic repulsion estimates'' in p.~13, 14 of \cite{Giac03} to bound $\IP^G [x_0 \stackrel{\varphi \ge 0}{\longleftrightarrow} x]$.

\medskip
We consider $x \in T^\infty$ with $|x| = n \ge n_0$ and introduce the event
\begin{equation}\label{3.8}
\mbox{$F_x = \Big\{$for at least  $\mbox{\f $\dis\frac{\delta}{6}$}\;n$ sites $y \in K_x$, $\varphi_{y^-} \le 1$ and $\varphi_{y^+} \le 1\Big\}$},
\end{equation}

\medskip\n
where $y^+$ stands for the only descendant of $y \in K_x$ in $[x_0,x]$ (so $y^-, y^+$ are the two neighbors of $y$ in $[x_0,x]$). We will show that the probabilities of $\{x_0 \stackrel{\varphi \ge 0}{\longleftrightarrow} x\} \cap F_x$ and of $\{x_0 \stackrel{\varphi \ge 0}{\longleftrightarrow} x\} \backslash F_x$ decay exponentially in $|x|$, see (\ref{3.16}) and (\ref{3.22}).  Note that by (\ref{3.7})~iii) the event $F_x$ only depends on the restrictions of $\varphi$ to $J_x$:
\begin{equation}\label{3.9}
F_x \in \sigma_{J_x} \;\mbox{(see (\ref{1.7}) for notation)}.
\end{equation}

\medskip\n
By the Markov property of the Gaussian free field, we find that
\begin{align}
&\mbox{under $\IP^G$, conditionally on $\sigma_{J_x}$, $(\varphi_y)_{y \in K_x}$ are independent and} \label{3.10}
\\
&\mbox{distributed as $a_y + \xi_y$, where} \nonumber
\\[2ex]
&a_y = E_y [H_{J_x} < \infty, \; \varphi(X_{H_{J_x}})], \; \mbox{and} \label{3.11}
\\[2ex]
&\mbox{$\xi_y$ is a centered Gaussian variable with variance} \label{3.12}
\\
&g_{T \backslash \{y^-,y^+\}}(y,y) = \mbox{the effective resistance from $y$ to $\{y^-,y^+\} \cup \{\infty\}$} \nonumber
\\
&\hspace{2.3cm} \stackrel{(\ref{3.7})\, {\rm iv)}}{\ge } (1 + M)^{-1}. \nonumber
\end{align}
We introduce the shorthand notation
\begin{equation}\label{3.13}
A_x = \{\varphi_y \ge 0; \; \mbox{for all} \; y \in [x_0,x]\} \big(= \big\{x_0 \stackrel{\varphi \ge 0}{\longleftrightarrow} x\big\}\big).
\end{equation}

\medskip\n
Then, as an application of (\ref{3.10}) - (\ref{3.12}), we find that
\begin{equation}\label{3.14}
\begin{array}{l}
\IP^G[A_x] = \IP^G[A_x \backslash F_x] + \IP^G [A_x \cap F_x] \;\mbox{with}
\\[1ex]
\IP^G[A_x \cap F_x] \stackrel{(\ref{3.10})-(\ref{3.12})}{\le} \IE^G\Big[F_x, \prod\limits_{y \in K_x} P^{\xi_y}[a_y + \xi_y \ge 0]\Big],
\end{array}
\end{equation}

\medskip\n
where $\xi_y$ has the distribution from (\ref{3.12}) under $P^{\xi_y}$.

\medskip
Note that for each $y \in K_x$ for which $a_y \le 1$ holds, we have
\begin{equation}\label{3.15}
\begin{split}
P^{\xi_y} [a_y + \xi_y \ge 0] & \le P^{\xi_y} [1 + \xi_y \ge 0] = 1 - P^{\xi_y} [\xi_y \le -1]
\\[1ex]
&\!\!\! \stackrel{(\ref{3.12})}{\le} 1 - P^Y[Y \ge \sqrt{1 + M}], \;\mbox{where $Y$ is $N(0,1)$-distributed}.
\end{split}
\end{equation}

\medskip\n
By definition of the event $F_x$ in (\ref{3.8}), on $F_x$ there are at least $\frac{\delta}{6} \;n$ sites $y \in K_x$ such that $a_y \le 1$. Hence, we see that
\begin{equation}\label{3.16}
\IP^G[A_x \cap F_x] \le \big(1 - P^Y[Y \ge   \sqrt{1 + M}]\big)^{\frac{\delta}{6}\,n}.
\end{equation}

\medskip\n
We will now bound $\IP^G [A_x \backslash F_x]$, i.e.~the first term in the right-hand side of the first line of (\ref{3.14}). On $A_x \backslash F_x$, there are at least $|K_x| - \frac{\delta}{6}\;n \ge \frac{\delta}{6} \;n$ sites $y \in K_x$ where $\max\{\varphi_{y^-},\varphi_{y^-}\} \ge 1$ (both $y^-,y^+$ belong to $J_x$, cf.~(\ref{3.7}) iii)). Since on $A_x$, $\varphi \ge 0$ on $[x_0,x]$, we see that
\begin{equation}\label{3.17}
A_x \backslash F_x \subseteq \Big\{\dsl_{y \in J_x} \varphi_y \ge \mbox{\f $\dis\frac{\delta}{12}$} \; n\Big\}
\end{equation}

\n
(two ``consecutive'' $y$ in $K_x$ might share the same neighbor in $J_y$, whence the term $\frac{\delta}{12}\;n$).

\medskip
Note that $\sum_{y \in J_x} \varphi_y$ is a centered Gaussian variable under $\IP^G$, hence we have
\begin{equation}\label{3.18}
\IP^G[A_x \backslash F_x] \stackrel{(\ref{3.17})}{\le} \IP^G \Big[\dsl_{y \in J_x} \varphi_y \ge \mbox{\f $\dis\frac{\delta}{12}$} \; n\Big] \le \exp\Big\{- \mbox{\f $\dis\frac{1}{2}$} \; \Big(\mbox{\f $\dis\frac{\delta}{12}$}\Big)^2 \; \dis\frac{n^2}{{\rm var}(\mbox{\f $\sum\limits_{y \in J_x}$} \varphi_y)} \Big\},
\end{equation}
and we can express the variance in the last term as
\begin{equation}\label{3.19}
\begin{split}
{\rm var}\Big(\dsl_{y \in J_x} \varphi_y\Big) & = \dsl_{y \in J_x} g(y,y) + 2 \dsl_{y< y'\,{\rm in} \, J_x} \,g(y,y')
\\[1ex]
&\!\!\! \stackrel{(\ref{1.11})}{=} \dsl_{y \in J_x} g(y,y) \Big(1 + 2 \dsl_{y' > y, y' \in J_x} \; \pr\limits_{y < y'' \le y'} \alpha_{y''}\Big),
\end{split}
\end{equation}

\n
where the notation $y < y'$ means that $y \in [x_0,y')$ and $y < y'' \le y'$ is defined in a similar fashion. Note that for $y'' \in J_x$, by (\ref{3.7}) ii) we have $R^\infty_{y''} \le A + 2$ and therefore
\begin{equation}\label{3.20}
\alpha_{y''} \le \alpha_A \stackrel{\rm def}{=} \mbox{\f $\dis\frac{A + 2}{A+3}$}, \; \mbox{for all $y'' \in J_x$}.
\end{equation}

\medskip\n
Since $\alpha_{y''} \le 1$, we can restrict the product in the last term of (\ref{3.19}) to the $y'' \in J_x$ and obtain the upper bound
\begin{equation}\label{3.21}
\begin{split}
{\rm var}\Big(\dsl_{y \in J_x} \varphi_y\Big) & \le \dsl_{y \in J_x} g(y,y) \Big(1 + 2 \dsl_{\ell \ge 1} \alpha^\ell_A\Big), \;\mbox{and since $R^\infty_y \le A + 2$ on $J_x$}
\\[1ex]
&\!\!\! \stackrel{(\ref{1.9})}{\le} |J_x| (A+2) \Big(1 + 2 \;\mbox{\f $\dis\frac{\alpha_A}{1 - \alpha_A}$}\Big) \le n(A+2)(2A + 5).
\end{split}
\end{equation}
Coming back to (\ref{3.18}) we find
\begin{equation}\label{3.22}
\IP^G[A_x \backslash F_x] \le \exp\Big\{ - \mbox{\f $\dis\frac{1}{2}$} \; \mbox{\f $\dis\frac{\delta^2}{144}$} \; \mbox{\f $\dis\frac{n}{(A+2)(2A+5)}$}\Big\}.
\end{equation}

\medskip\n
Collecting (\ref{3.16}) and (\ref{3.22}), and coming back to (\ref{3.14}), we see that for all $x \in T^\infty$ with $|x| = n \ge n_0$, we have
\begin{equation}\label{3.23}
\IP^G [x_0 \stackrel{\varphi \ge 0}{\longleftrightarrow} x]  \le\big(1 - P^Y[Y \le - \sqrt{1 + M}]\big)^{\frac{\delta}{6} \,n} + \exp\Big\{ - \mbox{\f $\dis\frac{1}{2}$} \; \mbox{\f $\dis\frac{\delta^2}{144}$} \; \mbox{\f $\dis\frac{n}{(A+2)(2A+5)}$}\Big\}.
\end{equation}
The claim (\ref{3.3}) readily follows.
\end{proof}

\begin{remark}\label{rem3.3} \rm In the first inequality of (\ref{3.18}), it is important that we bound $\IP^G[A_x \backslash F_x]$ in terms of a deviation of $\sum_{J_x} \varphi_y$ and {\it not} of $\sum_{(x_0,x)} \varphi_y$. The point is the following. Whereas the variance of $\sum_{J_x} \varphi_y$ grows at most linearly in $n$, as crucially shown in (\ref{3.21}), the variance of $\sum_{(x_0,x)} \varphi_y$ may grow faster than linearly in $n$, due to the presence of long stretches where, for instance, $d_y = 1$ and $\alpha_y$ is close to $1$. A faster than linear growth of the variance would destroy the exponential decay we obtain in (\ref{3.22}). \hfill $\square$

\end{remark}

We now come to the main result of this section. The proof uses the coupling between the Gaussian free field and random interlacements stated in (\ref{2.20}) of Corollary \ref{cor2.3}.
\begin{theorem}\label{theo3.3}
Assume that $0 < u_* < \infty$ and (\ref{3.1}), (\ref{3.2}) hold, then
\begin{equation}\label{3.24}
(0 \le) \,h_* < \sqrt{2u_*}.
\end{equation}
\end{theorem}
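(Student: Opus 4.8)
The plan is to run the coupling of Corollary~\ref{cor2.3} at a level slightly above $\sqrt{2u_*}$ and exploit the fact that, at such a level, the vacant set of random interlacements does \emph{not} percolate, with a quantitative (exponential) control on the connection probability, and then to transfer this control back to $\{\varphi \ge h\}$ for an $h$ strictly below $\sqrt{2u_*}$. Concretely, fix $u > u_*$ so that (by (\ref{1.41}), (\ref{2.23}) and Corollary~4.2 of \cite{Lyon90}, together with condition (\ref{3.2}) which makes the weights $c_u(e)$ of (\ref{1.40}) summable along any ray when $u>u_*$) the graph $T$ with weights $c_u(e)$ is recurrent, and in fact $\IP^I[x_0 \stackrel{\cV^u}{\longleftrightarrow} x] \le C e^{-c|x|}$ for $x \in T^\infty$ with $|x|$ large; here condition (\ref{3.2}) is exactly what forces the one-dimensional resistance sum $\sum_{y \in (x_0,x]} c_u(e_y)^{-1}$ to grow at least linearly, producing the exponential decay of the return/connection probabilities along each ray, and condition (\ref{3.1}) (via Remark~\ref{rem3.1}~1)) ensures (\ref{2.3}) so that the coupling is available at all.

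Next I would use the coupling (\ref{2.20}) with this fixed $u$. Set $h = \sqrt{2u} - \eta$ for a small $\eta>0$ to be chosen; more to the point, I want to compare $\{\varphi \ge h\}$ with $\cV^u$. The inclusion (\ref{2.20}), applied with the interval $B = (h - \sqrt{2u}, \infty) \cap (0,\infty)$ — i.e.\ $B=(\varepsilon,\infty)$ for a suitable $\varepsilon>0$, chosen so that $\sqrt{2u}+\varepsilon$ sits just below the threshold we can afford — gives $\{x : \eta_x \ge \sqrt{2u}+\varepsilon\} \subseteq \{x : \varphi_x \ge \varepsilon\} \cap \cV^u$, where $\eta$ is itself a GFF on $T$. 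The point is that the left-hand side is a level set of a genuine GFF at a \emph{positive} level, while its connected components are contained in those of $\cV^u$, which do not percolate. The subtlety, and I expect this to be the main obstacle, is that the inclusion only controls the level set of $\eta$ at the \emph{high} level $\sqrt{2u}+B$, not of $\varphi$ at a level below $\sqrt{2u_*}$ directly; so one cannot simply read off $h_* < \sqrt{2u_*}$. Instead one must argue: if $h_* = \sqrt{2u_*}$, then for every $h < \sqrt{2u_*}$ the level set $\{\varphi \ge h\}$ percolates, hence (taking $h$ close to $\sqrt{2u_*}$ from below, and $u$ close to $u_*$ from above so that $\sqrt{2u} - \sqrt{2u_*}$ and $\sqrt{2u_*} - h$ are both small) one produces an infinite cluster in $\{x : \eta_x \ge \sqrt{2u}+\varepsilon\}$ for a positive $\varepsilon$ — but this cluster lies inside a cluster of $\cV^u$, which is $\IP^I$-a.s.\ finite, a contradiction. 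Making the "hence" rigorous is where Proposition~\ref{prop3.2} enters: it supplies the exponential decay of $\IP^G[x_0 \stackrel{\varphi \ge 0}{\longleftrightarrow} x]$ under (\ref{3.1}), which quantitatively rules out percolation of $\{\varphi \ge 0\}$, and more importantly gives the room (a "gap") needed to push the level up by a fixed positive amount.

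The cleanest route, which I would try to implement, is a multiplicative/rescaling-of-levels argument à la \cite{Szni}: using the coupling, $\IP^I[x_0 \stackrel{\cV^u}{\longleftrightarrow} x] \le C e^{-c|x|}$ bounds $\IP^G[x_0 \stackrel{\eta \ge \sqrt{2u}+\varepsilon}{\longleftrightarrow} x]$, and by the monotone coupling and a shift one bounds $\IP^G[x_0 \stackrel{\varphi \ge \sqrt{2u}+\varepsilon - \text{(decay from Prop~\ref{prop3.2})}}{\longleftrightarrow} x]$; since Proposition~\ref{prop3.2} already gives $\IP^G[x_0 \stackrel{\varphi \ge 0}{\longleftrightarrow} x] \le 2e^{-\kappa|x|}$, one can afford to \emph{lower} the level by a fixed positive amount $\kappa'$ (depending on $A,M,\delta$) while keeping exponential decay, and thereby conclude $h_* \le \sqrt{2u}+\varepsilon - \kappa'$ for every $u > u_*$ and a suitable $\varepsilon = \varepsilon(u)$ with $\varepsilon(u) \to 0$ as $u \downarrow u_*$. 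Letting $u \downarrow u_*$ gives $h_* \le \sqrt{2u_*} - \kappa' < \sqrt{2u_*}$. The delicate bookkeeping — verifying that the amount one gains from Proposition~\ref{prop3.2} is a \emph{fixed} positive constant, uniform in $u$ near $u_*$, and does not get eaten by the $\varepsilon(u)$ and by the constants in the exponential decay of $\cV^u$ — is the real content of the argument, and is where I expect the hypothesis $0 < u_* < \infty$ (finiteness of $u_*$ to have room, positivity so that the comparison is non-vacuous) to be used in an essential way. I would also invoke Remark~\ref{rem2.4}~1), (\ref{1.31}), (\ref{1.39}) to pass freely between $T$ and $T^\infty$ throughout.
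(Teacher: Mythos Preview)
Your approach has a genuine gap: you run the coupling at a level $u > u_*$, but the inclusion in (\ref{2.20}) only tells you that $\{\eta > \sqrt{2u}\}$ sits \emph{inside} $\{\varphi > 0\} \cap \cV^u$. Knowing that $\cV^u$ does not percolate (and even that it has exponential decay) therefore only yields $h_* \le \sqrt{2u}$, which is \emph{weaker} than the bound $h_* \le \sqrt{2u_*}$ you already have from Corollary~\ref{cor2.3}. Your contradiction argument then breaks at the ``hence'': percolation of $\{\varphi \ge \varepsilon\}$ does \emph{not} imply percolation of the smaller set $\{\eta \ge \sqrt{2u}+\varepsilon\}$, and no ``shift'' or ``monotone coupling'' converts exponential decay at a level $\sqrt{2u}+\varepsilon$ into decay at a fixed lower level. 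Proposition~\ref{prop3.2} concerns level $0$ and does not supply such a mechanism.

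The paper goes in the opposite direction. One fixes $u = u_* - \rho$ with $\rho>0$ small (so $u < u_*$ and $\cV^u$ \emph{does} percolate), and applies the coupling (\ref{2.20}) with $B=(0,\infty)$:
\[
\IP^G\big[x_0 \stackrel{\varphi > \sqrt{2u}}{\longleftrightarrow} \infty\big]
\;\le\;
\IP^I\!\otimes\!\IP^G\big[x_0 \stackrel{\{\varphi>0\}\cap\cV^u}{\longleftrightarrow} \infty\big]
\;=\;
\IE^I\big[\cP_{x_0,u},\,\IP^G[x_0 \stackrel{\varphi \ge 0}{\longleftrightarrow} \infty \text{ in } \cC^{\cV^u}(x_0)]\big].
\]
The task is then to show that, on the event $\{|\cC^{\cV^u}(x_0)|=\infty\}$, an \emph{independent} GFF restricted to the random tree $\cC^{\cV^u}(x_0)$ does not percolate at level $0$. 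This is where (\ref{3.2}) and Proposition~\ref{prop3.2} are actually used. Since $u+2\rho>u_*$, the cluster $\cC^{\cV^u}(x_0)$ is ``thin'': the associated network with weights $c_{2\rho}(e)$ from (\ref{1.40}) is recurrent, and by Lyons' criterion there exist cut-sets $C_n$ of $\cC^{\cV^u}(x_0)$ going to infinity with $\sum_{x\in C_n} w_{|x|}\,c_{2\rho}(\{x^-,x\}) \to 0$ for any summable $(w_m)$. Condition (\ref{3.2}) bounds the exponent in $c_{2\rho}$ by $B|x|$, so choosing $w_m = e^{-\kappa m/2}$ and $\rho \le \kappa/(8B)$ one gets $\sum_{x\in C_n} e^{-\frac{3}{4}\kappa|x|} \to 0$. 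A union bound together with the exponential decay $\IP^G[x_0 \stackrel{\varphi\ge 0}{\longleftrightarrow} x] \le 2e^{-\kappa|x|}$ from Proposition~\ref{prop3.2} then kills percolation of $\{\varphi\ge 0\}$ inside $\cC^{\cV^u}(x_0)$. This gives $h_* \le \sqrt{2u} < \sqrt{2u_*}$.

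In short: the strict inequality comes from choosing $u$ \emph{below} $u_*$, not above, and from showing that the (infinite but near-critical) vacant cluster is thin enough that the independent level-$0$ constraint --- whose cost is quantified by Proposition~\ref{prop3.2} --- destroys percolation inside it.
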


\begin{proof}
In view of (\ref{1.31}), (\ref{1.39}) and (\ref{1.37}), we can assume that $T = T^\infty$. With 
$\kappa$ as in (\ref{3.3}) and $B$ as in (\ref{3.2}), we consider
\begin{equation}\label{3.25}
\mbox{$0 < u < u_*$ where $u = u_* - \rho$ with $\rho = \min\Big(\mbox{\f $\dis\frac{u_*}{2}$}, \mbox{\f $\dis\frac{\kappa}{8B}$}\Big)$}.
\end{equation}
We will show that
\begin{equation}\label{3.26}
h_* \le \sqrt{2u} \;(< \sqrt{2u_*}),
\end{equation}
and the claim (\ref{3.24}) will follow.

\medskip
We use the same notation $\cC^{\cV^u}(x_0)$ as in (\ref{1.34}). Since $u < u_*$, the event
\begin{equation}\label{3.27}
\cP_{x_0,u} = \{x_0 \stackrel{\cV^u}{\longleftrightarrow} \infty\} = \{| \cC^{\cV^u}(x_0)| = \infty\} \; \mbox{has positive $\IP^I$-measure}.
\end{equation}

\n
On the event $\cP_{x_0,u}$, $\cC^{\cV^u}(x_0)$ is an infinite sub-tree of $T$, rooted at $x_0$. However, $\cC^{\cV^u}(x_0)$ is ``thin''. Specifically, if we perform an independent Bernoulli site percolation on the tree $\cC^{\cV^u}(x_0)$ with parameter $p_{x,2\rho}$, for $x \in \cC^{\cV^u}(x_0)$, in the notation of (\ref{1.35}), (\ref{1.36}), the resulting connected component of $x_0$ under the joint law of $\cV^u$ and the above Bernoulli percolation is that of the cluster $\cC^{\cV^{u + 2 \rho}}$. Since $u + 2 \rho > u_*$, cf.~(\ref{3.25}), this cluster is a.s.~finite. As a consequence,
\begin{equation}\label{3.28}
\begin{array}{l}
\mbox{on an event $\wt{\cP}_{x_0,u} \subseteq \cP_{x_0,u}$ with $\IP^I(\cP_{x_0,u} \backslash \wt{\cP}_{x_0,u}) = 0$, a.s.~for the above}
\\
\mbox{Bernoulli site percolation the open cluster of $x_0$ in $\cC^{\cV^u}(x_0)$ is finite}.
\end{array}
\end{equation}

\medskip\n
Let us describe how the proof now proceeds. We will use the above statement expressing the thinness of $\cC^{\cV^u}(x_0)$ when $\wt{\cP}_{x_0,u}$ occurs, to construct a sequence of cut-sets $C_n$ of $\cC^{\cV^u}(x_0)$  tending to infinity, along which $\sum_{x \in C_n} e^{- \frac{3}{4}\kappa |x|}$ tends to $0$, see (\ref{3.32}). By the exponential decay shown in (\ref{3.3}) of Proposition \ref{prop3.2} and a union bound, this will imply that on $\wt{\cP}_{x_0,u}$ the connected component of $x_0$ in $\{\varphi \ge 0\} \cap \cC^{\cV^u}(x_0)$ is $\IP^G$-a.s. finite, see (\ref{3.33}). With the help of the coupling in (\ref{2.20}), it will then be a simple matter to infer that $\IP^G$-a.s., the connected component of $x_0$ in $\{\varphi > \sqrt{2u}\}$ is finite and the claim (\ref{3.26}) will follow. With this strategy in mind, we now proceed with the proof.

\medskip\n
As mentioned in (\ref{1.41}), by Corollary 5.25 of \cite{LyonPere16}, we know that on $\wt{\cP}_{x_0,u}$, 
\begin{equation}\label{3.29}
\begin{array}{l}
\mbox{$\cC^{\cV^u}(x_0)$ endowed with the weights $c_{2 \rho}(e)$, for $e = \{x^-,x\}$, $x \in \cC^{\cV^u}(x_0) \backslash \{x_0\}$},
\\
\mbox{is a recurrent network}.
\end{array}
\end{equation}

\medskip\n
By Corollary 4.2 of \cite{Lyon90}, it then follows that on $\wt{\cP}_{x_0,u}$ for any summable sequence of positive numbers $w_m > 0$, $m \ge 1$, one can find a sequence of cut-sets $C_n, n \ge 1$, of $\cC^{\cV^u}(x_0)$, with $d(x_0,C_n) \underset{n}{\longrightarrow} \infty$, such that
\begin{equation}\label{3.30}
\lim\limits_n \; \dsl_{x \in C_n} w_{|x|} \,e^{-2 \rho \mbox{\f $\sum\limits_{y \in (x_0,x]}$} \; \mbox{\scriptsize $\dis\frac{1}{R^\infty_y(1 + R^\infty_y)}$}} \Big(1 - e^{-2\rho \mbox{\scriptsize $\dis\frac{1}{R^\infty_x(1 + R^\infty_x)}$}}\Big)^{-1} = 0.
\end{equation}
We apply this observation with the choice
\begin{equation}\label{3.31}
w_m = e^{-\frac{\kappa}{2}\,m}, \;m \ge 1 \;\mbox{(and $\kappa$ as in (\ref{3.3}))},
\end{equation}

\medskip\n
and consider the corresponding sequence of cut-sets $C_n$, $n \ge 1$ of $\cC^{\cV^u}(x_0)$. By (\ref{3.2}) and (\ref{3.30}) we see that on $\wt{\cP}_{x_0,u}$
\begin{equation}\label{3.32}
\lim\limits_n \;\dsl_{x \in C_n}\; e^{-\frac{\kappa}{2} |x| - 2 \rho \,B|x|} = 0 \;\mbox{(record for later use that $\frac{\kappa}{2} + 2 \rho \,B \stackrel{(\ref{3.25})}{\le} \frac{3}{4} \; \kappa)$}.
\end{equation}
Thus, on $\wt{\cP}_{x_0,u}$, for large $n$,
\begin{equation}\label{3.33}
\begin{split}
\IP^G[x_0 \stackrel{\varphi \ge 0}{\longleftrightarrow} \infty \;\mbox{in} \; \cC^{\cV^u}(x_0)] & \le \dsl_{x \in C_n} \IP^G[x_0 \stackrel{\varphi \ge 0}{\longleftrightarrow} x] \stackrel{(\ref{3.3})}{\le} 2 \dsl_{x \in C_n} e^{-\kappa |x|} \, \underset{n \r \infty}{\stackrel{(\ref{3.32})}{\longrightarrow}} 0.
\end{split}
\end{equation}
This shows that
\begin{equation}\label{3.34}
\IE^I [\cP_{x_0,u}, \IP^G\big[x_0 \stackrel{\varphi \ge 0}{\longleftrightarrow} \infty \;\mbox{in} \; \cC^{\cV^u}(x_0)]\big] = 0.
\end{equation}

\medskip\n
As observed in Remark \ref{3.1} 1), condition (\ref{3.1}) implies that (\ref{2.20}) holds. Choosing $B = (0,\infty)$ in (\ref{2.20}), we see that
\begin{equation}\label{3.35}
\begin{split}
\IP^G [x_0 \stackrel{\varphi > \sqrt{2u}}{\longleftrightarrow} \infty]& \stackrel{(\ref{2.20})}{\le} \IP^I \otimes \IP^G  [x_0 \stackrel{\cV^u \cap \{\varphi > 0\}}{\longleftrightarrow} \infty]
\\
&\;\; = E^I \big[\cP_{x_0,u}, \IP^G\big[x_0 \stackrel{\varphi \ge 0}{\longleftrightarrow} \infty \;\mbox{in} \; \cC^{\cV^u}(x_0)]\big] \stackrel{(\ref{3.34})}{=} 0.
\end{split}
\end{equation}
This proves that (\ref{3.26}) holds and concludes the proof of the theorem.
\end{proof}

We will later apply Theorem \ref{theo3.3} in Section 5 when $T$ is the typical realization of a super-critical Galton-Watson process conditioned on non-extinction, see Theorem \ref{theo5.4}.

\section{A sufficient condition for $h_* > 0$}
\setcounter{equation}{0}

In this section we provide a sufficient condition which ensures that $h_* > 0$. The main result appears in Proposition \ref{prop4.2}. We provide a first application of the results of this and the previous section in Corollary \ref{cor4.5} and in Remark \ref{rem4.6}.

\medskip
We begin with some notation and a lemma that will be useful in the proof of Proposition \ref{prop4.2}. For $h \in \IR$, we denote by $\pi_h$ the multiplication operator by $1_{[h,\infty)}$, so that for $f$ function on $\IR$, one has
\begin{equation}\label{4.1}
\pi_h \,f = 1_{[h,\infty)} \,f.
\end{equation}
We recall the notation $Q^\alpha$ from (\ref{1.6}).

\begin{lemma}\label{lem4.1}
If $f$ is a bounded, non-decreasing, right-continuous function on $\IR$, which vanishes on $(-\infty,0)$, then
\begin{align}
&\mbox{for any $0 < \alpha \le 1$, $\pi_0\, Q^\alpha f$ has the same properties as noted above for $f$}, \label{4.2}
\\[1ex]
& \mbox{for any $a \in \IR$, $\pi_0 \,Q^\alpha f(a)$ is a non-decreasing function of $\alpha \in (0,1]$}. \label{4.3}
\end{align}
\end{lemma}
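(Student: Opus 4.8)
The plan is to verify each of the two assertions directly from the definition of $Q^\alpha$ in (\ref{1.6}), where $Q^\alpha f(a) = E^Y[f(\alpha a + \sqrt{\alpha}\,Y)]$ with $Y$ standard normal. For (\ref{4.2}), I would argue as follows. Boundedness of $Q^\alpha f$ is immediate since $\|Q^\alpha f\|_\infty \le \|f\|_\infty$. For monotonicity in $a$: if $a \le a'$ then $\alpha a + \sqrt{\alpha}\,y \le \alpha a' + \sqrt{\alpha}\,y$ for every $y$ (using $\alpha > 0$), and since $f$ is non-decreasing the integrand increases pointwise, so $Q^\alpha f(a) \le Q^\alpha f(a')$; multiplying by the non-decreasing indicator $1_{[0,\infty)}$ preserves this (both factors are non-negative and non-decreasing, and $f \ge 0$ since $f$ vanishes on $(-\infty,0)$ and is non-decreasing, hence $Q^\alpha f \ge 0$). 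Vanishing of $\pi_0 Q^\alpha f$ on $(-\infty,0)$ is built into the multiplication by $1_{[0,\infty)}$. Right-continuity: as $a_n \downarrow a$, $\alpha a_n + \sqrt{\alpha}\,y \downarrow \alpha a + \sqrt{\alpha}\,y$, and right-continuity of $f$ plus dominated convergence (by $\|f\|_\infty$) gives $Q^\alpha f(a_n) \to Q^\alpha f(a)$; then note that $\pi_0$ does not spoil right-continuity at the single possible new jump point $a = 0$, since $1_{[0,\infty)}$ is itself right-continuous and the product of two bounded right-continuous functions is right-continuous.

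For (\ref{4.3}), fix $a$. If $a < 0$ then $\pi_0 Q^\alpha f(a) = 0$ for all $\alpha$, so there is nothing to prove; assume $a \ge 0$, so that $\pi_0 Q^\alpha f(a) = Q^\alpha f(a) = E^Y[f(\alpha a + \sqrt{\alpha}\,Y)]$. I want to show $\alpha \mapsto E^Y[f(\alpha a + \sqrt{\alpha}\,Y)]$ is non-decreasing on $(0,1]$. The natural route is a coupling/change-of-variables argument: write $\alpha a + \sqrt{\alpha}\,Y = \sqrt{\alpha}(Y + \sqrt{\alpha}\,a)$, so that conditionally the argument is $\sqrt{\alpha}$ times a $N(\sqrt{\alpha}\,a, 1)$ variable. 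As $\alpha$ increases (with $a \ge 0$ fixed), both the scaling factor $\sqrt{\alpha}$ and the mean $\sqrt{\alpha}\,a$ increase, and the variable $\sqrt{\alpha}(Y+\sqrt{\alpha}\,a)$ is stochastically increasing in $\alpha$: indeed for $\alpha \le \alpha'$ one has $\sqrt{\alpha}\,a \le \sqrt{\alpha'}\,a$ and $\sqrt{\alpha}(Y + \sqrt{\alpha}\,a) \le \sqrt{\alpha'}(Y+\sqrt{\alpha}\,a) \le \sqrt{\alpha'}(Y+\sqrt{\alpha'}\,a)$ on the event $\{Y + \sqrt{\alpha}\,a \ge 0\}$, while on the complementary event one can instead compare after reflecting. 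Cleaner: since $f$ is non-decreasing and vanishes on $(-\infty,0)$, we have $f(t) = \int_0^\infty 1\{t \ge s\}\,\mu(ds) + f(0)\,1\{t \ge 0\}$ for a non-negative measure $\mu$ (the Lebesgue–Stieltjes measure of $f$ on $(0,\infty)$), reducing the claim to showing $\alpha \mapsto P^Y[\alpha a + \sqrt{\alpha}\,Y \ge s] = \Phi\big((\alpha a - s)/\sqrt{\alpha}\big)$ is non-decreasing in $\alpha$ for each fixed $s \ge 0$ and $a \ge 0$ (and the term with $s=0$ is handled the same way). This is elementary calculus: the derivative of $(\alpha a - s)/\sqrt{\alpha} = \sqrt{\alpha}\,a - s\alpha^{-1/2}$ with respect to $\alpha$ equals $\tfrac{1}{2}\alpha^{-1/2}a + \tfrac{1}{2}s\alpha^{-3/2} \ge 0$, so the argument of $\Phi$ increases and hence so does $\Phi$ of it.

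The main obstacle I anticipate is the monotonicity-in-$\alpha$ statement (\ref{4.3}), specifically getting the decomposition $f(t) = \int 1\{t\ge s\}\mu(ds)$ rigorous (handling the atom at $0$ and the right-continuity/normalization of $f$ carefully) and then being careful that the reduction is legitimate — i.e. that one may integrate the pointwise inequality $\alpha \mapsto P^Y[\alpha a + \sqrt\alpha Y \ge s]$ non-decreasing against $\mu$ by Tonelli, which is fine since everything is non-negative. The part (\ref{4.2}) is routine once one observes $f \ge 0$ and uses dominated convergence for right-continuity; the only mild subtlety there is checking that multiplication by $\pi_0 = 1_{[h,\infty)}$ with $h = 0$ does not destroy right-continuity, which holds because $1_{[0,\infty)}$ is right-continuous and products of bounded right-continuous functions are right-continuous.
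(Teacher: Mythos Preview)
Your proof is correct and follows essentially the same route as the paper. For (\ref{4.2}) the paper simply says ``immediate by direct inspection of (\ref{1.6})'', which is what you spell out; for (\ref{4.3}) the paper also passes to the Lebesgue--Stieltjes measure $\mu$ of $f$ on $[0,\infty)$ and reduces to the monotonicity in $\alpha$ of $P^Y[Y \ge b/\sqrt{\alpha} - \sqrt{\alpha}\,a]$ for $a,b \ge 0$, exactly your ``cleaner'' argument.

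One remark: the pathwise coupling you started and then abandoned actually goes through without any reflection trick. On $\{Y + \sqrt{\alpha}\,a \ge 0\}$ you already showed $\alpha a + \sqrt{\alpha}\,Y \le \alpha' a + \sqrt{\alpha'}\,Y$, while on the complement $\{Y + \sqrt{\alpha}\,a < 0\}$ one has $\alpha a + \sqrt{\alpha}\,Y < 0$ and hence $f(\alpha a + \sqrt{\alpha}\,Y) = 0 \le f(\alpha' a + \sqrt{\alpha'}\,Y)$ since $f \ge 0$. So $f(\alpha a + \sqrt{\alpha}\,Y) \le f(\alpha' a + \sqrt{\alpha'}\,Y)$ holds for every $Y$, and integrating gives (\ref{4.3}) directly. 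This avoids the measure decomposition altogether and is arguably the simplest proof.
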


\begin{proof}
The claim (\ref{4.2}) is immediate by direct inspection of (\ref{1.6}). We now prove (\ref{4.3}). We can find a positive measure with finite mass, supported on $[0,\infty)$, such that $f(a) = \mu([0,a])$, for $a \ge 0$ (and $f(a) = 0$, for $a < 0$). Hence, we see that for $a \in \IR$,
\begin{equation}\label{4.4}
\begin{split}
Q^\alpha f(a) &\stackrel{(\ref{1.6})}{=} E^Y [f(a \alpha + \sqrt{\alpha}\;Y)]  = E^Y \big[\dil_{[0, \infty)} 1\{a \alpha + \sqrt{\alpha} \;Y \ge b\} \,d \mu(b)\big]
\\[1ex]
& = \dil_{[0, \infty)} P^Y\Big[Y \ge \mbox{\f $\dis\frac{b}{\sqrt{\alpha}} - \sqrt{\alpha}$} \;a\Big] \,d \mu(b).
\end{split}
\end{equation}
Now, when $a \ge 0$, $b \ge 0$, the function $\alpha > 0 \mapsto P^Y[Y \ge \frac{b}{\sqrt{\alpha}} - \sqrt{\alpha} \;a]$ is non-decreasing, and the claim (\ref{4.3}) follows.
\end{proof}

The main result of this section comes next.

\begin{proposition}\label{prop4.2}
Assume that the tree $T$ contains an infinite binary sub-tree $\ov{T}$ rooted at $x_0$, such that for some $M > 0$,
\begin{equation}\label{4.5}
\sup\limits_{x \in \ov{T}} \;d_x\le M.
\end{equation}
Then, $T$ is transient and
\begin{equation}\label{4.6}
h_* > 0.
\end{equation}
\end{proposition}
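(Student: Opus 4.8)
\textbf{Proof proposal for Proposition \ref{prop4.2}.}

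The plan is to reduce the problem to the binary sub-tree $\ov{T}$, where the recursion \eqref{1.23} becomes homogeneous enough to analyze via a fixed-point argument. First I would observe that transience of $T$ is immediate: since $\ov T$ is an infinite binary tree on which, by \eqref{1.10} ii) and \eqref{4.5}, the resistances $R^\infty_x(\ov T)$ are bounded below by $1/M$, a standard cut-set / Nash-Williams type estimate (or the explicit series-parallel computation on the binary tree) shows $\ov T$ is transient, and adding edges keeps $T$ transient. The substantive claim is $h_*>0$. By \eqref{1.27} ii) it suffices to show that for some $h>0$ the function $q_{x_0,h}$ is not identically $1$, i.e.\ $\IP^G[x_0 \overset{\varphi\ge h}{\longleftrightarrow}\infty]>0$. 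By the monotonicity observation \eqref{1.29} and the fact that an infinite $\{\varphi\ge h\}$-cluster in $\ov T$ is also one in $T$, it is enough to produce an infinite cluster inside $\ov T$.

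The key step is a fixed-point argument on the recursion \eqref{1.23} restricted to $\ov T$. Pick $x$ in $\ov T$; it has two descendants $y_1,y_2$ in $\ov T$, with $\alpha_{y_i}\in(0,1]$ and $\alpha_{y_i}\ge \alpha_0:=\tfrac{1/M}{1+1/M}=\tfrac1{M+1}$ by \eqref{1.10} ii) and \eqref{1.5} (resistances in $\ov T$ are at least $1/d_x\ge 1/M$; note $R^\infty_x(\ov T)\le R^\infty_x(T)$ in the relevant direction is not needed — what matters is the lower bound). Writing $r_{x,h}=\IP^G[x\overset{\ov T,\varphi\ge h}{\longleftrightarrow}\infty]$ for the probability (as a function of $\varphi_x$, averaged) and comparing with the sub-solution obtained by discarding all descendants of $x$ in $T\setminus\ov T$, one gets from \eqref{1.23} a one-dimensional inequality of the form: if $g\colon\IR\to[0,1]$ is non-increasing, equals $1$ on $(-\infty,h)$, and satisfies
\begin{equation}\label{eq:fp}
g \le 1_{(-\infty,h)} + 1_{[h,\infty)}\,\bigl(\pi_0 Q^{\alpha}(1-g)\bigr)^{\!2}\cdot(\text{correction}),
\end{equation}
then the corresponding percolation probability is controlled; more cleanly, I would set $\widehat q_{x,h} = 1 - q_{x,h}$ and use Lemma \ref{lem4.1} (monotonicity of $\pi_0 Q^\alpha$ in $\alpha$, applied with $1-q_{y,h}$ which is non-decreasing, right-continuous, vanishing below $h$ — after the harmless shift $h\mapsto 0$) to replace each $\alpha_{y_i}$ by the worst case $\alpha_0$. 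This makes the recursion a homogeneous one on the binary tree: the limiting survival function $\widehat q$ satisfies $\widehat q = \pi_h\bigl(1-(Q^{\alpha_0}(1-\widehat q))^2\bigr)$ pointwise, i.e.\ $1-\widehat q$ is a fixed point of the monotone operator $\Phi_h(f)=1_{(-\infty,h)}+1_{[h,\infty)}(Q^{\alpha_0} f)^2$ on the set of non-increasing $[0,1]$-valued functions.

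Now I would run the classical branching-process / Galton-Watson-in-random-environment comparison to show $\Phi_h$ has a fixed point $\ne 1$ for small $h>0$. The point is that at $h=0$ the map $f\mapsto 1_{[0,\infty)}(Q^{\alpha_0}f)^2$ has $f\equiv 1$ as a fixed point, but the linearization (mean number of offspring continuing the $\{\varphi\ge 0\}$-cluster) is $2\,Q^{\alpha_0}1(0_-)$-type quantity, which is strictly greater than $1$ because a Brownian-type increment with positive probability stays non-negative — concretely, starting from $\varphi_x=0$, each of the two children independently has $\varphi_{y}\ge 0$ with probability $\ge c(\alpha_0)=P^Y[Y\ge 0]=\tfrac12$ by \eqref{1.6}, so the expected number of non-negative children is $\ge 1$, and with a strictly positive starting value it exceeds $1$. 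By standard sub-/super-solution iteration (iterate $\Phi_h$ from the constant $1$ downward; the decreasing limit is the maximal fixed point, and one checks it is $<1$ on $(h,\infty)$ by exhibiting an explicit sub-solution supported on $[h,\infty)$, e.g.\ a small bump, using that the branching is super-critical), one obtains $q_{x_0,h}\not\equiv 1$ for all sufficiently small $h>0$. By \eqref{1.27} this gives $h_*>0$.

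\textbf{Main obstacle.} The delicate point is making the fixed-point/super-criticality argument quantitative and uniform: one must show not merely that the linearized mean offspring exceeds $1$ at $h=0$, but that it stays $>1$ for a range of small $h>0$ and that this forces the genuine (non-linear, with the hard $\pi_h$ truncation) recursion to have survival probability bounded away from $0$. The truncation $\pi_h$ is not smooth and the operator $Q^{\alpha}$ mixes in negative values, so the naive linearization around $f\equiv1$ must be handled with care — I expect the cleanest route is to produce by hand an explicit non-trivial sub-solution $f_0\le \Phi_h(f_0)$ (built from indicator-like functions of intervals $[h,h']$ using \eqref{4.2}), whose existence for small $h$ follows from a one-line estimate on $Q^{\alpha_0}$ applied to such an indicator, and then to conclude by monotone iteration and Lemma \ref{lem4.1}. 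Verifying that sub-solution estimate, and the reduction \eqref{eq:fp} from general $\alpha_y$ to the uniform $\alpha_0$ via Lemma \ref{lem4.1}, are where the real work lies.
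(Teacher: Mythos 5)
Your reduction coincides with the paper's: you restrict to the binary sub-tree $\ov{T}$, note that $R^\infty_y \ge 1/d_y \ge 1/M$ gives $\alpha_y \ge \ov{\alpha} := \frac{1}{M+1}$ for $y \in \ov{T}$, and invoke Lemma \ref{lem4.1} (with the shift made legitimate by $h \ge 0$) to dominate $q^n_{x,h}$ by the solution of the homogeneous recursion on the binary tree driven by the single kernel $Q^{\ov{\alpha}}$; this is precisely (\ref{4.7})--(\ref{4.12}). The genuine gap is the step you yourself flag as the ``main obstacle'' and do not carry out: proving that the homogeneous binary-tree recursion is supercritical for some $h>0$, i.e.\ that its fixed point is not identically $1$. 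Your heuristic --- each child is non-negative with probability at least $1/2$, so the mean number of surviving offspring is at least $1$ and exceeds $1$ from a strictly positive start --- is not a proof: a mean offspring equal to $1$ at the boundary value is consistent with criticality and almost sure extinction, and the entire difficulty is to quantify the strict gain from positive field values in a way that survives the hard truncation $\pi_h$ and the Gaussian averaging, uniformly over the iteration.

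The paper closes this gap with a spectral argument imported from \cite{Szni}: it works on $L^2(\ov{\nu})$, where $\ov{\nu}$ is the centered Gaussian law of variance $\ov{\alpha}/(1-\ov{\alpha}^2)$, introduces the self-adjoint Hilbert--Schmidt operator $\ov{L}_h = 2\pi_h Q^{\ov{\alpha}}\pi_h$, and shows that its largest eigenvalue satisfies $\ov{\lambda}_0 > 1$ by testing against $1_{[0,\infty)}$, using that $P^Y[\ov{\alpha}\,a + \sqrt{\ov{\alpha}}\,Y \ge 0] > \frac12$ for $a>0$ --- this is exactly the rigorous form of your ``strictly positive starting value'' remark. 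Continuity of $h \mapsto \ov{\lambda}_h$ (Proposition 3.1 of \cite{Szni}) then yields $\ov{\lambda}_h > 1$ for small $h>0$, and the proof of (3.33) of \cite{Szni} converts $\ov{\lambda}_h > 1$ into the non-triviality $\ov{\gamma}_h = 1 - \int_h^\infty \ov{q}_{x_0,h}\,d\ov{\nu} > 0$. If you prefer your hand-made sub-solution route (a $g_0 \not\equiv 0$ with $g_0 \le \Psi_h(g_0)$ for the survival recursion, which by monotone iteration from the constant $1$ would indeed give $g_0 \le 1 - q_{x_0,h}$), you must still actually construct such a $g_0$, and that construction is a substitute for, not a shortcut around, the eigenvalue estimate. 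As written, the proposal identifies where the work lies but does not do it.
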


\begin{remark}\label{rem4.3}  \rm The example in Remark \ref{rem2.4} 2), where one moves the root to its second neighbor, shows that the sole existence of an infinite binary tree rooted at $x_0$ does not guarantee that $h_* > 0$ (in this example we have $0 = h_* = \sqrt{2u}_*$). \hfill $\square$
\end{remark}

\medskip\n
{\it Proof of Proposition \ref{prop4.2}}: The transience of $T$ is immediate. We introduce by analogy with (\ref{1.20}), (\ref{1.21}), $\ov{T}_n = \{x \in \ov{T}$; $|x| = n\} = T_n \cap \ov{T}$, $\ov{B}_n = \{x \in \ov{T}$; $|x| \le n\} = B_n \cap \ov{T}$, as well as the function $\ov{q}\,^n_{x,h}$, for $n \ge 0$, $x \in \ov{B}_n$, $h \in \IR$, via
\begin{equation}\label{4.7}
\left\{ \begin{array}{l}
\ov{q}\,^n_{x,h} = 1_{(-\infty,h)}, \;\mbox{for $x \in \ov{T}_n$},
\\[1ex]
\ov{q}\,^n_{x,h} = 1_{(-\infty,h)} + 1_{[h, \infty)} \pr\limits_{y^- = x, y \in \ov{T}} Q^{\ov{\alpha}}(\ov{q}\,^n_{y,h}), \; \mbox{for $x \in \ov{B}_{n-1}$},
\end{array}\right.
\end{equation}
where
\begin{equation}\label{4.8}
\ov{\alpha} = \mbox{\f $\dis\frac{1}{M+1}$} \ge \alpha_x = \mbox{\f $\dis\frac{R^\infty_x}{1 + R^\infty_x}$}, \;\mbox{for $x \in \ov{T}$}
\end{equation}

\n
(since $R^\infty_x \ge \frac{1}{M}$, by (\ref{1.10}) ii) and (\ref{4.5})).

\medskip
As in Lemma \ref{lem1.2}, we see that for any $x \in \ov{T}$, $h \in \IR$, the functions $\ov{q}\,^n_{x,h}$ increase in $n \ge |x|$ to a function $\ov{q}_{x,h}$, which is non-increasing, $[0,1]$-valued, equal to $1$ on $(-\infty,h)$, with only possible discontinuity at $h$, and such that
\begin{equation}\label{4.9}
\ov{q}_{x,h} = 1_{(-\infty,h)} + 1_{[h, \infty)} \,\pr\limits_{y^- = x, y \in \ov{T}} Q^{\ov{\alpha}}(\ov{q}_{y,h})
\end{equation}

\n
(although we  will not explicitly need it, it is also straightforward to see that $\ov{q}_{x,h} = \ov{q}_{x_0,h}$ for all $x \in \ov{T}$).

\medskip
As we now explain, when $h \ge 0$,
\begin{equation}\label{4.10}
\ov{q}_{x,h}(a) \ge q_{x,h}(a), \;\mbox{for all $a \in \IR$ and $x \in \ov{T}$}.
\end{equation}
Indeed, for any $n \ge 1$,
\begin{equation}\label{4.11}
\ov{q}_{x,h} \ge 1_{(-\infty,h)} = q^n_{x,h}, \;\mbox{when $x \in \ov{T}_n$},
\end{equation}
and for $x \in \ov{B}_{n-1}$,
\begin{equation}\label{4.12}
\begin{split}
q^n_{x,h} & \stackrel{(\ref{1.21})}{=} 1_{(- \infty,h)} + 1_{[h, \infty)} \pr\limits_{y^- = x} Q^{\alpha_y} (q^n_{y,h}) 
\\[1ex]
&\;\; \le 1_{(- \infty,h)} + 1_{[h, \infty)} \pr\limits_{y^- = x, y \in \ov{T}} Q^{\alpha_y} (q^n_{y,h}) 
\\[1ex]
&\;\; \le 1_{(- \infty,h)} + 1_{[h, \infty)} \pr\limits_{y^- = x, y \in \ov{T}} Q^{\ov{\alpha}} (q^n_{y,h}) ,
\end{split}
\end{equation}
where in the last step we have used the fact that $\ov{\alpha} \le \alpha_y$, for $y \in \ov{T}$, see (\ref{4.8}), and applied (\ref{4.3}) of Lemma \ref{lem4.1} to deduce that $\pi_0 \,Q^{\ov{\alpha}}(1 - q^n_{y,h}) \le \pi_0 \, Q^{\alpha_y} (1 - q^n_{y,h})$, whence $\pi_h\,Q^{\alpha_y} (q^n_{y,h}) \le \pi_h \, Q^{\ov{\alpha}}(q^n_{y,h})$ (recall that $h \ge 0$).

\medskip
Combining (\ref{4.11}), with (\ref{4.9}), (\ref{4.12}), we see that the inequality $\ov{q}_{x,h} \ge q^n_{x,h}$ for $x \in \ov{T}_n$ gets propagated to all $x \in \ov{B}_n$. By Lemma \ref{lem1.2}, letting $n$ tend to infinity we obtain (\ref{4.10}).

\medskip
By (\ref{1.26}) we know that
\begin{equation}\label{4.13}
\IP^G[x_0 \stackrel{\varphi \ge h}{\longleftrightarrow} \infty] = 1 - \IE^G[q_{x_0,h}(\varphi_{x_0})].
\end{equation}

\medskip\n
We will show that for small $h > 0$, the right-continuous non-increasing function $\ov{q}_{x_0,h}$ is not identically equal to $1$, so that the same holds for $q_{x_0,h}$ by (\ref{4.10}), and hence the probability in (\ref{4.13}) is positive. In essence, the proof that $\ov{q}_{x_0,h}$ is not identically equal to $1$ will rely on the fact that the largest eigenvalue $\ov{\lambda}_h$ of the Hilbert-Schmidt operator $\ov{L}_h$ defined in (\ref{4.16}) below, is bigger than $1$ for small $h>0$, see (\ref{4.20}). This operator is defined on $L^2(\ov{\nu})$, where $\ov{\nu}$ is a centered Gaussian law on $\IR$, which we now introduce.

\medskip
We denote by $\ov{\nu}$ the centered Gaussian law on $\IR$ with variance
\begin{equation}\label{4.14}
\ov{\sigma}^2 = \dis\frac{\ov{\alpha}}{1 - \ov{\alpha}^2}\,.
\end{equation}
Note that (see also (3.10) - (3.16) of \cite{Szni})
\begin{equation}\label{4.15}
\mbox{$Q^{\ov{\alpha}}$ is a self-adjoint, non-negative Hilbert-Schmidt operator on $L^2(\ov{\nu})$}.
\end{equation}

\medskip\n
We then consider the self-adjoint, non-negative, Hilbert-Schmidt operator on $L^2(\ov{\nu})$ defined by
\begin{equation}\label{4.16}
\ov{L}_h = 2 \pi_h \,Q^{\ov{\alpha}} \pi_h, \;\mbox{for $h \in \IR$},
\end{equation}

\medskip\n
as well as its largest eigenvalue (which coincides with its operator norm)
\begin{equation}\label{4.17}
\ov{\lambda}_h = \| \ov{L}_h\|_{L^2(\ov{\nu}) \r L^2(\ov{\nu})}.
\end{equation}

\n
The proof of Proposition 3.1 of \cite{Szni} shows that
\begin{equation}\label{4.18}
\mbox{$h \r \ov{\lambda}_h$ is a decreasing homeomorphism from $\IR$ onto $(0,2)$}.
\end{equation}

\medskip\n
Moreover, $\pi_0 \,Q^{\ov{\alpha}} \pi_0 (1_{[0,\infty)})(a) = P^Y [\ov{\alpha} \,a + \sqrt{\ov{\alpha}} \;Y \ge 0] >  \frac{1}{2}$ for $a > 0$, so that
\begin{equation}\label{4.19}
\ov{\lambda}_0  \ge \big(\ov{L}_0 \,1_{[0,\infty)}, 1_{[0,\infty)}\big)_{L^2 ( \ov{\nu})} / \|1_{[0,\infty)}\|^2_{L^2(\ov{\nu})} > 1.
\end{equation}
It thus follows that
\begin{equation}\label{4.20}
\mbox{$\ov{\lambda}_h > 1$ for small $h > 0$}.
\end{equation}

\n
One can then proceed as in the proof of (\ref{3.33}) of \cite{Szni} to deduce that
\begin{equation}\label{4.21}
\ov{\gamma}_h = 1 - \dis\int^\infty_h d \,\ov{\nu} (a) \, \ov{q}_{x_0,h}(a) > 0, \; \mbox{when $\ov{\lambda}_h > 1$}.
\end{equation}

\n
This shows that for small $h > 0$, $\ov{q}_{x_0,h}$ and hence $q_{x_0,h}$ by (\ref{4.10}), are not identically equal to $1$. As explained below (\ref{4.13}), it follows that $\IP^G [x_0 \stackrel{\varphi \ge h}{\longleftrightarrow} \infty] > 0$ for small $h > 0$, and hence $h_* > 0$. This proves (\ref{4.6}). \hfill $\square$

\begin{remark}\label{rem4.4} \rm When $M$ in (\ref{4.5}) is chosen as an integer bigger or equal to $2$, the quantity $\ov{\gamma}_h$ in (\ref{4.21}) can be interpreted as the probability that the Gaussian free field $\ov{\varphi}$ on an $(M+2)$-regular tree, when restricted to a given ``forward binary tree'' rooted at some point, is such that the root belongs to an infinite connected component of $\{\ov{\varphi} \ge h\}$ (see also Section 3 of \cite{Szni}). \hfill $\square$
\end{remark}

\medskip
As an application of Theorem \ref{theo3.3} and Proposition \ref{prop4.2} we have
\begin{corollary}\label{cor4.5}
Assume that the rooted tree $T$ has bounded degree, contains an infinite binary sub-tree, and $\sup_{x \in T} R^\infty_x < \infty$, then
\begin{equation}\label{4.22}
0 < h_* < \sqrt{2u}_* < \infty.
\end{equation}
\end{corollary}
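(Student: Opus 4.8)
The plan is to read all four inequalities in (\ref{4.22}) off the results already established, the only genuinely new ingredient being a short argument that $u_* < \infty$. Throughout write $D = \sup_{x \in T}{\rm deg}(x) < \infty$ and $A_0 = \sup_{x\in T}R^\infty_x < \infty$, and let $\overline{T}$ be an infinite binary sub-tree of $T$ rooted at $x_0$.

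First, since $d_x \le {\rm deg}(x) \le D$ for every $x \in \overline{T}$, Proposition \ref{prop4.2} applies with $M = D$, giving that $T$ is transient and $h_* > 0$. Next, taking $A = A_0 + 1$ makes the set in (\ref{2.3}) empty, so (\ref{2.3}) holds and Corollary \ref{cor2.3} yields $0 \le h_* \le \sqrt{2u_*}$; combined with $h_* > 0$ this already forces $u_* > 0$.

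To get $u_* < \infty$ I would use (\ref{1.34}) and (\ref{1.36}): since $R^\infty_x \le A_0$ for all $x$, every Bernoulli parameter $p_{x,u}$ is at most $\bar{p}(u) := e^{-u/(A_0(1+A_0))}$, which tends to $0$ as $u \to \infty$. Because $T$ has degree at most $D$, the cluster $\cC^{\cV^u}(x_0)$ is stochastically dominated by the family tree of a Galton--Watson process with offspring law ${\rm Bin}(D,\bar{p}(u))$, which is sub-critical once $D\,\bar{p}(u) < 1$ and hence a.s.\ finite for such $u$; thus $\IP^I[x_0 \stackrel{\cV^u}{\longleftrightarrow} \infty] = 0$ and $u_* \le u < \infty$. (Alternatively, one can check via Nash--Williams together with (\ref{1.41}) that $T$ endowed with the weights $c_u(e)$ of (\ref{1.40}) is recurrent for $u$ large.)

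It then remains to verify conditions (\ref{3.1}) and (\ref{3.2}) and to invoke Theorem \ref{theo3.3}. With $A = A_0 + 1$ and $M = D$, every $y \in T$ satisfies $R^\infty_y \le A$ and $d_{y^-} \le M$, so the sum in (\ref{3.1}) equals $|x| - 1 \ge \frac{1}{2}|x|$ for $|x| \ge 2$; hence (\ref{3.1}) holds with $\delta = \frac{1}{2}$. For (\ref{3.2}), bounded degree and (\ref{1.10})~ii) give $R^\infty_y \ge 1/(D-1)$ for $y \ne x_0$ (and $R^\infty_{x_0} \ge 1/D$), so $R^\infty_y(1 + R^\infty_y)$ is bounded below by some $c(D) > 0$ and the sum in (\ref{3.2}) is at most $|x|/c(D)$; hence (\ref{3.2}) holds. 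Since $0 < u_* < \infty$, Theorem \ref{theo3.3} now gives $h_* < \sqrt{2u_*}$, and together with the inequalities above this proves (\ref{4.22}). The only point not obtained by directly quoting an earlier result is $u_* < \infty$; the work there is routine, and bounded degree is used in an essential way only in that branching-process comparison and in the lower bound on $R^\infty_y$ feeding (\ref{3.2}).
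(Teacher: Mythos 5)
Your proposal is correct and follows essentially the same route as the paper: one verifies the hypotheses of Proposition \ref{prop4.2} and of Theorem \ref{theo3.3} (including $0<u_*<\infty$, which the paper also reads off (\ref{1.34}), (\ref{1.36}) and leaves as ``straightforward''; your branching-process domination and your derivation of $u_*>0$ from $0<h_*\le\sqrt{2u_*}$ are both legitimate ways to fill that in). The only point you gloss over is that the hypothesis merely provides an infinite binary sub-tree somewhere in $T$, not one rooted at $x_0$; the paper handles this by moving the root to the root of the binary sub-tree (which, by the observation below (\ref{1.4}), changes only finitely many $R^\infty_x$ and none of the conditions involved), and that sentence should be added before invoking Proposition \ref{prop4.2}.
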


\begin{proof}
By the observation below (\ref{1.4}), moving the root of $T$ if necessary, we can assume that the infinite binary tree is rooted at $x_0$, the root of $T$. The $R^\infty_x$, $x \in T$, are uniformly bounded, and bounded away from $0$ by (\ref{1.10}) ii). It is now straightforward from (\ref{1.34}), (\ref{1.36}) to infer that $0 < u_* < \infty$, see also Theorem 5.1 of \cite{Teix09b}. Conditions (\ref{3.1}) and (\ref{3.2}) are immediate and the assumptions of Proposition \ref{prop4.2} are fulfilled as well. The claim (\ref{4.22}) now follows from Theorem \ref{theo3.3} and Proposition \ref{prop4.2}. 
\end{proof}

\begin{remark}\label{rem4.6} \rm 
In particular, when $T$ is an infinite tree of bounded degree such that outside a finite set, all vertices have degree at least $3$, the assumptions of the above corollary are fulfilled by $T^\infty$, and in view of (\ref{1.31}), (\ref{1.39}) one has
\begin{equation}\label{4.23}
0 < h_* < \sqrt{2u}_* < \infty.
\end{equation}
\hfill $\square$
\end{remark}

\section{Application to super-critical Galton-Watson trees}
\setcounter{equation}{0}

We will now apply the results of the previous sections to the case where $T$ is a super-critical Galton-Watson tree conditioned on non-extinction, and the root $x_0$ is the initial ancestor. Our main results appear in Theorems \ref{theo5.4} and \ref{theo5.8}. An important step is carried out in Proposition \ref{prop5.2} where it is checked that condition (\ref{3.1}) holds almost surely. Theorem \ref{theo5.4} establishes the inequality $h_* < \sqrt{2u}_*$ and the exponential decay in $|x|$ of $\IP^G[x_0 \stackrel{\varphi \ge 0}{\longleftrightarrow} x]$ in a broad enough generality. Theorem \ref{theo5.8} provides a sufficient condition for $h_* > 0$.

\medskip
We first introduce some notation and recall various known facts. We denote by $\nu$ the offspring distribution and assume that
\begin{equation}\label{5.1}
m = \dsl^\infty_{k=0} k \, \nu(k) \in (1,\infty)
\end{equation}

\n
(we are in the super-critical regime). We denote by $P^\nu$ the probability governing the Galton-Watson tree and by $f$ the generating function of $\nu$, which is strictly convex and equals 
\begin{equation}\label{5.2}
f(s) = \dsl^\infty_{k=0} s^k \nu (k), \; 0 \le s \le 1 .
\end{equation}

\n
If $q$ stands for the extinction probability $P^\nu[|T| < \infty]$, then $q \in [0,1)$ is the smallest fixed point of $f$ on $[0,1]$, the only other fixed point being $s = 1$. Moreover, conditioned on non-extinction, i.e.~under $P_*^\nu[ \cdot ] = P^\nu[ \cdot \big| \,|T| = \infty]$, the sub-tree $T^\infty$ of sites with an infinite line of descent corresponds to a Galton-Watson tree with offspring distribution $\nu_\infty$ having the generating function
\begin{equation}\label{5.3}
f_\infty(s) = \mbox{\f $\dis\frac{f(q+s(1-q))-q}{1-q}$} , \; 0 \le s \le 1,
\end{equation}

\medskip\n
so that $\nu_\infty(0) = 0$, and $\nu_\infty$ has same mean $m$ as $\nu$, see for instance Proposition 5.28 of \cite{LyonPere16}. In addition, one knows that $P^\nu_*$-a.s., $T$ (and $T^\infty$) are transient, see Theorem 3.5 and Corollary 5.10 of \cite{LyonPere16}.

\medskip
It is further known from Section 3 of \cite{Tass10}, that $u_*$ is $P_*^\nu$-a.s. constant and by Theorem 1 and (1.5)' of \cite{Tass10}, that
\begin{equation}\label{5.4}
\begin{array}{l}
\mbox{$u_* \in (0,\infty)$ is characterized as the unique solution of $f'_\infty(\cL(u)) = 1$, where}
\\
\mbox{$\cL(u) = E^\nu_*[e^{-u(1 - \alpha_{x_0})}] (= E^{\nu_\infty}[e^{-u(1-\alpha_{x_0})}])$, for $u \ge 0$}
\end{array}
\end{equation}

\n
($E^\nu_*$, $E^{\nu_\infty}$ stand for the respective $P^\nu_*$ and $P^{\nu_\infty}$-expectations).

\medskip
As we now explain, conditioned on non-extinction, $h_*$ is almost surely constant. We will later see, cf.~(\ref{5.8}), that this constant is finite.

\begin{lemma}\label{lem5.1}
$h_*(T) \ge 0$ is almost everywhere constant under $P^\nu_* + P^{\nu_\infty}$.
\end{lemma}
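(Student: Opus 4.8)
The plan is to show that $h_*(T)$ is a tail-measurable quantity with respect to the Galton-Watson branching structure, so that the Kolmogorov zero-one law (in the form adapted to Galton-Watson trees) forces it to be almost surely constant. First I would reduce to $T^\infty$: by \eqref{1.31} we have $h_*(T) = h_*(T^\infty)$, and under $P^\nu_*$ the tree $T^\infty$ is a Galton-Watson tree with offspring law $\nu_\infty$ (where $\nu_\infty(0)=0$); hence it suffices to prove the statement under $P^{\nu_\infty}$, i.e.\ for a supercritical Galton-Watson tree with no leaves.

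The key structural input is the self-similarity identity \eqref{1.29}, namely $q^T_{x,h} = q^{T_x}_{x,h}$, together with the characterization \eqref{1.27} of $h_*$ through whether $q_{x_0,h}$ is identically $1$. For a Galton-Watson tree, let $y_1,\dots,y_N$ (with $N$ the offspring of $x_0$, a.s.\ finite) be the children of $x_0$; then the subtrees $T_{y_1},\dots,T_{y_N}$ are i.i.d.\ copies of the full Galton-Watson tree (shifted), and by \eqref{1.23} together with \eqref{1.29},
\begin{equation}\label{5.5new}
q_{x_0,h} = 1_{(-\infty,h)} + 1_{[h,\infty)}\,\pr\limits_{i=1}^N Q^{\alpha_{y_i}}\big(q^{T_{y_i}}_{y_i,h}\big).
\end{equation}
From this I would argue that the event $\{q_{x_0,h}\equiv 1\}$ holds for $T$ if and only if it holds for every one of the subtrees $T_{y_i}$ (the ``only if'' uses that each factor $Q^{\alpha_{y_i}}(q^{T_{y_i}}_{y_i,h})$ is $\le 1$ and that the product equals $1$ on $[h,\infty)$ precisely when each factor does; the ``if'' is immediate from \eqref{5.5new}). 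Consequently, writing $h_*$ in the form $h_* = \inf\{h\in\IR;\ q_{x_0,h}\equiv 1\}$ via \eqref{1.27}, one obtains that $h_*(T) = \min_{1\le i\le N} h_*(T_{y_i})$ (modulo the monotonicity in $h$ of the functions $q_{x_0,h}$, which guarantees the infimum defining $h_*$ behaves well under this minimum). Thus $h_*(T)$ is determined as a measurable function of the i.i.d.\ sequence of subtree values $h_*(T_{y_i})$.

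With this recursive distributional identity $h_*(T) \overset{d}{=} \min_{1\le i\le N} h_*(T^{(i)})$, where the $T^{(i)}$ are i.i.d.\ copies and $N\sim\nu_\infty$ is independent, I would conclude by a standard argument: if $h_*$ were not a.s.\ constant there would be some threshold $c$ with $0 < P^{\nu_\infty}[h_* < c] < 1$; but $P^{\nu_\infty}[h_* < c] = 1 - f_\infty\big(1 - P^{\nu_\infty}[h_*<c]\big)$ by conditioning on $N$ and independence, and the only fixed points of $s \mapsto 1 - f_\infty(1-s)$ on $[0,1]$ are $s=0$ and $s=1$ (since $f_\infty$ is strictly convex with $f_\infty(1)=1$ and $f_\infty'(1)=m>1$, equivalently because $\nu_\infty(0)=0$ forces no extinction). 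Hence $P^{\nu_\infty}[h_*<c]\in\{0,1\}$ for every $c$, which means $h_*$ is a.s.\ equal to a constant. Finally $h_*\ge 0$ is already recorded in \eqref{1.19} (Proposition \ref{propA2}), and transferring back via \eqref{1.31} and the definition of $P^\nu_*$ gives the claim under $P^\nu_* + P^{\nu_\infty}$.

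The main obstacle I anticipate is the careful justification of the identity $h_*(T) = \min_i h_*(T_{y_i})$: one must be attentive to the distinction between $q_{x_0,h}$ being identically $1$ versus being $1$ almost everywhere, and to the fact that $h_*$ is defined through an infimum of $h$, so that taking the minimum over children requires knowing that the ``not identically $1$'' property propagates correctly across the $Q^{\alpha}$ kernels and the finite product in \eqref{5.5new}. The monotonicity of $h\mapsto q_{x_0,h}$ from Lemma \ref{lem1.2} and the explicit form of $Q^\alpha$ (which maps non-constant functions taking values in $[0,1]$ with value $1$ below $h$ to functions that are $<1$ somewhere on $[h,\infty)$) should resolve this, but it is the step that needs the most care; everything else is the routine zero-one law computation via the fixed-point structure of $f_\infty$.
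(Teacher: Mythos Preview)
Your approach is essentially the same as the paper's, but there is a sign slip that you should correct. From your own equivalence ``$q^T_{x_0,h}\equiv 1$ if and only if $q^{T_{y_i}}_{y_i,h}\equiv 1$ for every $i$'', together with $h_*(T)=\inf\{h: q^T_{x_0,h}\equiv 1\}$, one obtains
\[
h_*(T)=\max_{1\le i\le N} h_*(T_{y_i}),
\]
not the minimum. With the corrected identity the fixed-point equation becomes $p=f_\infty(p)$ for $p=P^{\nu_\infty}[h_*<c]$, and since $f_\infty(0)=0$, $f_\infty(1)=1$, $f_\infty'(1)=m>1$ and $f_\infty$ is convex, the only fixed points on $[0,1]$ are $0$ and $1$; the conclusion $P^{\nu_\infty}[h_*<c]\in\{0,1\}$ thus stands. (Your equation $p=1-f_\infty(1-p)$ is consistent with the erroneous $\min$, so the slip propagated but the final answer happens to survive.)

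For comparison, the paper proceeds slightly differently: it works directly under $P^\nu_*$ and shows that the event $A_h=\{q^T_{x_0,h}\equiv 1\}$ is an \emph{inherited} property in the sense of Lyons--Peres (finite trees belong to $A_h$ by \eqref{1.24}, and $T\in A_h$ forces each $T_x\in A_h$ for $|x|=1$ via \eqref{1.23} and \eqref{1.29}); the $0$--$1$ law for inherited events (Proposition~5.6 of \cite{LyonPere16}) then gives $P^\nu_*[A_h]\in\{0,1\}$ directly, without passing through the recursive distributional identity for $h_*$. Your route instead proves the full ``if and only if'' and unwinds the $0$--$1$ law by hand via the fixed points of $f_\infty$; this is a bit more work but entirely equivalent. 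The delicate point you flagged---that $Q^{\alpha}(q)\equiv 1$ on $[h,\infty)$ forces $q\equiv 1$---is exactly what the paper also uses, and it follows since $q\le 1$ is right-continuous and the Gaussian kernel has full support.
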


\begin{proof}
By (\ref{1.31}) and the observation below (\ref{5.2}), $h_*(T)$ has same law under $P^\nu_*$ and $P^{\nu_\infty}$. We thus only need to consider $P^\nu_*$. For $h \in \IR$, we introduce the event (see Remark \ref{rem1.3} for the notation)
\begin{equation*}
\mbox{$A_h = \{T$; $q^T_{x_0,h}$ is identically $1\}$}.
\end{equation*}

\medskip\n
This event (in the space of Galton-Watson trees) is hereditary (or in the terminology of \cite{LyonPere16}, p.~136, the property it describes is inherited). Namely, all finite Galton-Watson trees belong to $A_h$ (by (\ref{1.24})), and when $T \in A_h$, the equation
\begin{equation*}
q^T_{x_0,h} \stackrel{(\ref{1.23})}{=} 1_{(-\infty,h)} + 1_{[h,\infty)} \pr\limits_{|x|=1} Q^{\alpha_x} (q^T_{x,h})
\end{equation*}

\n
implies that $q^T_{x,h}$ are identically equal to $1$ for all $|x| = 1$ in $T$, and by (\ref{1.29}) we see that $T_x \in A_h$ for all  $|x| = 1$ in $T$. It now follows by Proposition 5.6 of \cite{LyonPere16} that $P^\nu_*[A_h] \in \{0,1\}$ for all $h \in \IR$.

\medskip
By (\ref{1.27}) we know that for any $h$ in $\IR$
\begin{equation}\label{5.5}
\mbox{$P^\nu_*$-a.s., $\{h_*(T) < h\} \subseteq A_h$ and $\{h_*(T) > h\} \subseteq A^c_h$}.
\end{equation}

\n
The random variable $h_*(T)$ is non-negative, see (\ref{1.19}), so that $P^\nu_*[A^c_h] = 1$ for all $h < 0$. If one sets
\begin{equation}\label{5.6}
\mbox{$h_0 = \inf\{h \in \IR; P^\nu_* [A_h] = 1\}$ (with the convention $\inf \phi = \infty$)},
\end{equation}

\n
it is now routine to see with (\ref{5.5}) and the above $0$-$1$ law that $P^\nu_*$-a.s., $h_*(T) = h_0$. This proves Lemma \ref{lem5.1}.
\end{proof}

\medskip
We will now see that condition (\ref{3.1}) is $P^\nu_*$-a.s.~fulfilled. In particular, this implies that $P^\nu_*$-a.s. the conclusions of Corollary \ref{cor2.3} (see Remark \ref{rem3.1} 1)) and of Proposition \ref{prop3.2} hold.

\begin{proposition}\label{prop5.2}
There exist $A,M,\delta > 0$ such that $P^\nu_*$-a.s., for large $n$ and all $x \in T^\infty$ with $|x| =n$,
\begin{equation}\label{5.7}
\dsl_{y \in (x_0,x)} 1\{R^\infty_y \le A, d_{y^-} \le M\} \ge \delta n .
\end{equation}
In particular, the $P_*^\nu$-a.s. constant quantities $h_*$ and $u_*$ satisfy
\begin{equation}\label{5.8}
0 \le h_* \le \sqrt{2u}_* < \infty .
\end{equation}
\end{proposition}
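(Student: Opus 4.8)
The plan is to establish the combinatorial estimate \eqref{5.7} for a typical Galton-Watson tree by controlling, uniformly along rays, the fraction of vertices $y$ on the geodesic $(x_0,x)$ that are ``good'', in the sense that both $R^\infty_y \le A$ and $d_{y^-} \le M$. Two ingredients enter. First, since $\nu$ has finite mean, the probability that a vertex has more than $M$ offspring can be made small by taking $M$ large, so along a fixed ray the density of vertices $y$ with $d_{y^-} \le M$ is close to $1$ with overwhelming probability; this is a standard large-deviation / Borel-Cantelli argument once one notices that the offspring numbers along a ray are i.i.d. under the appropriate (size-biased) description of the tree seen from a ray. Second, and more delicately, one needs that $R^\infty_y \le A$ for a positive density of $y$ on the ray. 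Here the key point is that $R^\infty_y$ is bounded above by $1/d_y$ plus a parallel combination over the subtrees hanging off the children of $y$; heuristically, whenever $y$ has at least two children each possessing an infinite line of descent, $R^\infty_y$ is bounded by a constant times the minimum of the $R^\infty$ of those children, and an inductive/renewal argument along the ray shows that $\{R^\infty_y \le A\}$ occurs with density bounded below. Since under $P^\nu_*$ the tree $T^\infty$ is a Galton-Watson tree with $\nu_\infty(0)=0$ and mean $m>1$, branching is plentiful and this mechanism is robust.

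Concretely, I would first reduce to $T^\infty$ (using \eqref{1.31}, \eqref{1.37}, \eqref{1.39}, and that $T^\infty$ under $P^\nu_*$ is a GW tree with offspring law $\nu_\infty$, $\nu_\infty(0)=0$), so that every vertex already has an infinite line of descent and the condition ``$x \in T^\infty$'' in \eqref{5.7} is automatic. Next I would set up a ``ray decomposition'': describe the law of $T^\infty$ together with a distinguished spine, or more simply work with the many-to-one lemma, so that the sequence of events along $(x_0,x)$ becomes a Markov chain with good ergodic properties. The uniformity over all $x$ with $|x|=n$ is handled by a first-moment (union) bound: the number of vertices at level $n$ in $T^\infty$ grows at most like $C m^n$ (indeed by Kesten-Stigum $W_n/m^n$ converges), while the probability that a fixed ray fails to accumulate $\delta n$ good vertices decays like $e^{-cn}$ with $c$ that can be taken larger than $\log m$ by choosing $\delta$ small; a Borel-Cantelli argument then upgrades ``for each large $n$ with high probability'' to ``$P^\nu_*$-a.s. for all large $n$''. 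The choice of constants is: pick $M$ so that $\nu(\{M+1,M+2,\dots\})$ is tiny; pick $A$ large enough that the resistance-recursion argument gives density at least $2\delta$ of vertices with $R^\infty_y\le A$ along a ray with exponentially small failure probability; then take $\delta$ small enough that the exponential rate beats $\log m$.

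For the second assertion \eqref{5.8}: once \eqref{5.7} is known $P^\nu_*$-a.s., condition \eqref{3.1} holds for $T$ (equivalently $T^\infty$), hence by Remark \ref{rem3.1} 1) condition \eqref{2.3} holds, and Corollary \ref{cor2.3} gives $0 \le h_* \le \sqrt{2u}_*$ $P^\nu_*$-a.s.; since $h_*$ is $P^\nu_*$-a.s.\ constant by Lemma \ref{lem5.1} and $u_*$ is $P^\nu_*$-a.s.\ constant and finite by \eqref{5.4} (it is the unique solution of $f'_\infty(\cL(u))=1$ and lies in $(0,\infty)$), the inequality $0 \le h_* \le \sqrt{2u}_* < \infty$ holds for the deterministic constants. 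Strictly, one should note that \eqref{2.3} is used only through Proposition \ref{prop2.2} and Corollary \ref{cor2.3}, which apply verbatim on the (full-measure) event where \eqref{5.7} holds.

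The main obstacle will be the lower bound on the density of vertices with $R^\infty_y \le A$ along a ray, with an exponentially small failure probability uniform enough to survive the union bound over the $\asymp m^n$ vertices at level $n$. The subtlety is that $R^\infty_y$ depends on the whole subtree $T_y$, not just on local data, so the events $\{R^\infty_y \le A\}$ along a ray are not independent; the fix is to bound $R^\infty_y$ from above by a quantity that depends only on a bounded-depth neighborhood below $y$ (using that a single child with two further descendants already forces $R^\infty_y$ below an explicit constant via the parallel law for resistances), turning the problem into a renewal/ergodic estimate for a finite-range functional of the GW tree along the ray, to which standard large-deviation bounds apply. Handling the degenerate values $R^\infty_y=\infty$ is not an issue here since we are on $T^\infty$ where all $R^\infty_y$ are finite.
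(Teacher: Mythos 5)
Your overall architecture matches the paper's: reduce to $T^\infty$ with offspring law $\nu_\infty$, control the density of high-degree vertices along rays via the spine/size-biased description together with an exponential Chebyshev bound and a first-moment union bound over the $\asymp m^n$ vertices at level $n$, separately control the density of vertices with $R^\infty_y\le A$, intersect the two good sets, and then deduce (\ref{5.8}) from Remark \ref{rem3.1}~1), Corollary \ref{cor2.3}, Lemma \ref{lem5.1} and (\ref{5.4}). The degree part and the concluding deduction of (\ref{5.8}) are correct and essentially identical to the paper's argument (compare (\ref{5.9}), (\ref{5.11})--(\ref{5.16})).

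There is, however, a genuine gap in your treatment of the resistance density, precisely at the point you identify as the main obstacle. Your proposed fix --- to dominate $R^\infty_y$ by a functional of a \emph{bounded-depth} neighbourhood below $y$, on the grounds that ``a single child with two further descendants already forces $R^\infty_y$ below an explicit constant'' --- is false. Effective resistance to infinity admits no upper bound from finite-depth data: if $y$ has one child $y_1$ with two children whose subtrees are paths of length $L$ before branching again, then $R^\infty_y = 1+(1+L)/2\to\infty$ with $L$, even though the depth-$2$ picture below $y$ is fixed. More generally $R^\infty_y\le\min_{y''^-=y}(1+R^\infty_{y''})$ does hold, but the minimizing child may be the one lying on the ray $[x_0,x]$, so the events $\{R^\infty_y\le A\}$ along a ray remain nested and strongly dependent, and your ``inductive/renewal argument along the ray'' does not get off the ground as stated. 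The paper's resolution is different in substance: it introduces the surrogate $\wt{R}_y$ of (\ref{5.17}), namely $\min\{1+R^\infty_{y'};\,y'\in T_y,\ y'\notin[y,x]\}$ (and $\infty$ if $d_y=1$), which still dominates $R^\infty_y$ from above but depends only on the bushes hanging \emph{off} the ray at $y$; these bushes are disjoint, hence i.i.d.\ given the spine, and the required uniform-over-rays geometric decay of $P^{\nu_\infty}[\min_{|x|=n}\sum_{y\in(x_0,x)}1\{\wt{R}_y\le A\}\le(1-\eta')n]$ is then exactly (2.16) in Lemma~1 of \cite{GrimKest}. So the correct decoupling is ``independent across $y$ but still depending on infinite subtrees'', not ``finite range''; without this (or an equivalent) ingredient your proof of (\ref{5.10}) does not close.
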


\begin{proof}
Once we prove (\ref{5.7}), the second claim is by Remark \ref{rem3.1} 1) a direct consequence of Corollary \ref{cor2.3}, together with Lemma \ref{lem5.1} and (\ref{5.4}). We thus only need to prove (\ref{5.7}), which pertains to $T^\infty$. By the observation below (\ref{5.2}) we can work with $P^{\nu_\infty}$ in place of $P^\nu_*$, so that $T = T^\infty$, $P^{\nu_\infty}$-a.s.. We are going to show that
\begin{align}
&\mbox{for all $\eta > 0$, there exists $M > 0$ such that $P^{\nu_\infty}$-a.s., for large $n$ and all}\label{5.9} \\
&\mbox{$x \in T$ with $|x| = n$, $\dsl_{y \in [x_0,x)} 1\{d_y \ge M\} \le \eta n$}, \nonumber
\\[2ex]
&\mbox{there exists $\eta' \in (0,1)$ and $A > 0$ such that $P^{\nu_\infty}$-a.s., for large $n$ and all}\label{5.10} \\
&\mbox{$x \in T$ with $|x| = n$, $\dsl_{y \in (x_0,x)} 1\{R^\infty_y \le A\} \ge (1- \eta') n$}. \nonumber\end{align}

\n
Then, choosing $\delta,\eta > 0$ such that $\eta' + \eta + \delta <1$, it will follow that (\ref{5.7}) holds with $P^{\nu_\infty}$ in place of $P^\nu_*$ and, as mentioned above, this will prove our claim.

\medskip
We start with the proof of (\ref{5.9}). We denote by $\ov{P}$ the measure on Galton-Watson trees endowed with a spine denoted by $w_i$, $i \ge 0$ (so $w_0 = x_0$ and $w^-_{i+1} = w_i$, for each $i \ge 0$), such that under $\ov{P}$ the individuals on the spine reproduce with the size-biased distribution (recall that $\nu$ and $\nu_\infty$ have same mean $m$)
\begin{equation}\label{5.11}
\ov{\nu}(k) = \mbox{\f $\dis\frac{k}{m}$} \;\nu_\infty(k), \; k \ge 1,
\end{equation}

\n
the individuals off the spine reproduce with distribution $\nu_\infty$, and at each step, the next element $w_{i+1}$ on the spine is chosen uniformly among the offspring of $w_i$ (we refer to Chapter 1 \S 3 of \cite{AbraDelm15}, or to Chapter 12 \S 1 of \cite{LyonPere16} for details).

\medskip
Then, for $M, \eta, \lambda > 0$ and $n \ge 1$, we have
\begin{equation}\label{5.12}
\begin{array}{l}
\mbox{$P^{\nu_\infty} \big[$for some $x$ in $T$ with $|x| = n$, $\dsl_{y \in [x_0,x)} 1\{d_y \ge M\} > \eta n\big] \le$}
\\[3ex]
E^{\nu_\infty} \big[\dsl_{|x| = n} 1\big\{\dsl_{y \in [x_0,x)} 1\{d_y \ge M\} > \eta n\big\}\big] =
\\[-1ex]
m^n \,\ov{P}\big[\dsl^{n-1}_{i=0} 1\{d_{w_i} \ge M\} > \eta n\big]\!\!\! \stackrel{\begin{array}{c}
\mbox{\scriptsize exponential}\\[-1ex]
\mbox{\scriptsize Chebyshev}
\end{array}}{\le} \!\!\!\!\! m^n e^{-\lambda \eta n}  \;\ov{E}\big[e^{\lambda \sum\limits^{n-1}_{i=0} 1\{d_{w_i} \ge M\}}\big]
\end{array}
\end{equation}

\n
and we made use of Lemma 1.3.2 of \cite{AbraDelm15} for the equality on the third line. The $d_{w_i}$, $i \ge 0$, are i.i.d. and distributed as $\ov{\nu}$ under $\ov{P}$. Hence, we have ($\ov{E}$ stands for the $\ov{P}$-expectation):
\begin{equation}\label{5.13}
\ov{E}\big[e^{\lambda \sum\limits^{n-1}_{i=0} 1\{d_{w_i} \ge M\}}\big] = \ov{E}[e^{\lambda \,
1_{\{d_{x_0} \ge M\}}}]^n \stackrel{(\ref{5.11})}{=} m^{-n} \big(m  + (e^\lambda - 1) \,E^{\nu_\infty} [d_{x_0},d_{x_0} \ge M]\big)^n .
\end{equation}

\medskip
If we now choose $\lambda_0 > 0$ such that
\begin{equation}\label{5.14}
e^{\frac{\lambda_0}{2} \,\eta} > m,
\end{equation}
and $M_0$ (large) such that
\begin{equation}\label{5.15}
e^{\frac{\lambda_0}{2} \,\eta} > m + (e^{\lambda_0} - 1) \,E^{\nu_\infty} [d_{x_0}, d_{x_0} \ge M_0],
\end{equation}

\n
we find after insertion in the last line of (\ref{5.12}) that
\begin{equation}\label{5.16}
\mbox{$P^{\nu_\infty}\Big[$for some $x$ in $T$ with $|x| = n$, $\dsl_{y \in [x_0,x)}1\{d_y \ge M_0\} > \eta n\Big] \le e^{-\frac{\lambda_0}{2} \,\eta n}$}.
\end{equation}
The claim (\ref{5.9}) follows by Borel-Cantelli's lemma.

\medskip
We then turn to the proof of (\ref{5.10}).  We note that if $x \in T$ and $y \in (x_0,x)$, we can set 
\begin{equation}\label{5.17}
\wt{R}_y = \left\{ \begin{array}{l}
\infty, \; \mbox{if $d_y = 1$,}
\\[1ex]
\mbox{$\min\{1 + R^\infty_{y'}; y' \in T_y$ and $y' \notin [y,x]\}$ otherwise}\,.
\end{array}\right.
\end{equation}

\n
Then, clearly $R^\infty_y \le \wt{R}_y$. If we now choose $A$ large enough such that $P^{\nu_\infty} [1 + R^\infty_{x_0} \le A] > 0$, it follows from (\ref{2.16}) in Lemma 1 of \cite{GrimKest}, that for some $\eta' \in (0,1)$
\begin{equation}\label{5.18}
P^{\nu_\infty} \Big[\min\limits_{|x| = n} \;\dsl_{y \in (x_0,x)} 1\{\wt{R}_y \le A\} \le (1- \eta')\,n\Big] \;\mbox{decays geometrically in $n$}.
\end{equation}

\n
By Borel Cantelli's lemma, we see that
\begin{equation}\label{5.19}
\begin{array}{l}
\mbox{$P^{\nu_\infty}$-a.s. for large $n$ and all $x \in T$ with $|x| = n$,}
\\[1ex]
\dsl_{y \in (x_0,x)} 1\{R^\infty_y \le A\} \ge \dsl_{y \in (x_0,x)} \;1\{\wt{R}_y \le A\} \ge (1- \eta')\,n\,.
\end{array}
\end{equation}
The claim (\ref{5.10}) follows. This completes the proof of Proposition \ref{prop5.2}. 
\end{proof}

We will now see that the existence of some finite exponential moment of the offspring distribution $\nu$ ensures that (\ref{3.2}) holds $P^\nu_*$-almost surely. This is the last step before Theorem \ref{theo5.4}, which is in essence an application of Theorem \ref{theo3.3} to the present set-up.

\medskip
We introduce the condition
\begin{equation}\label{5.32}
\mbox{for some $\gamma > 0$, $\dsl_{k \ge 0} e^{\gamma k} \nu(k) < \infty$}.
\end{equation}

\n
Under (\ref{5.32}) the generating function $f$ in (\ref{5.2}) has an analytic extension to a disc in the complex plane centered at the origin with radius bigger than $1$. In view of (\ref{5.3}) a similar property holds for $f_\infty$, and hence
\begin{equation}\label{5.33}
\mbox{for some $\gamma_\infty > 0$, $\dsl_{k \ge 1} e^{\gamma_\infty k} \nu_\infty(k) < \infty$}.
\end{equation}

\begin{proposition}\label{prop5.3}
When (\ref{5.32}) holds, then there exists $B > 0$, such that $P^{\nu}_*$-a.s., for large $n$ and all $x \in T^\infty$ with $|x| = n$,
\begin{equation}\label{5.34}
\dsl_{y \in (x_0,x]} \; \mbox{\f $\dis\frac{1}{R^\infty_y (1 + R^\infty_y)}$} \le B \, n.
\end{equation}
\end{proposition}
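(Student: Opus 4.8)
The plan is to reduce the statement to a linear growth bound on the sums $\dsl_{y \in (x_0,x]} d_y$ along geodesics, and then to establish that bound by the spine/many-to-one method already used in the proof of Proposition~\ref{prop5.2}. As in that proof, by the observation below (\ref{5.2}) we may work under $P^{\nu_\infty}$ in place of $P^\nu_*$, so that $T = T^\infty$ almost surely and every vertex has at least one descendent. The first ingredient is the elementary bound, valid for every $y \in T$,
\begin{equation*}
\frac{1}{R^\infty_y (1 + R^\infty_y)} \;\le\; \frac{1}{R^\infty_y} \;\le\; d_y ,
\end{equation*}
where the last inequality is (\ref{1.10})~ii). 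Hence it suffices to produce a constant $B > 0$ such that $P^{\nu_\infty}$-a.s., for large $n$ and all $x$ with $|x| = n$, one has $\dsl_{y \in (x_0,x]} d_y \le B n$.

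\textbf{The many-to-one estimate.} Let $\ov{P}$ be the size-biased spine measure of the proof of Proposition~\ref{prop5.2}, with spine $(w_i)_{i\ge0}$, $w_0 = x_0$, the spine vertices reproducing independently with the law $\ov{\nu}(k) = \frac{k}{m}\,\nu_\infty(k)$. First I would record that $\ov{\nu}$ still has a finite exponential moment: for $0 < \lambda < \gamma_\infty$ one has $k\,e^{\lambda k} \le c_\lambda\, e^{\gamma_\infty k}$ for all $k \ge 1$, so that, by (\ref{5.33}),
\begin{equation*}
\ov{E}\big[e^{\lambda d_{x_0}}\big] \;=\; \frac{1}{m}\,\dsl_{k\ge1} k\, e^{\lambda k}\,\nu_\infty(k) \;<\; \infty .
\end{equation*}
Fix such a $\lambda$. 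Then, exactly as in (\ref{5.12})--(\ref{5.13}), a union bound, the many-to-one formula (Lemma~1.3.2 of \cite{AbraDelm15}), the exponential Chebyshev inequality, and the fact that the $d_{w_i}$ are i.i.d.\ under $\ov{P}$ give, for any $B > 0$,
\begin{equation*}
P^{\nu_\infty}\big[\,\exists\, x,\ |x| = n,\ \dsl_{y\in[x_0,x)} d_y > B n\,\big] \;\le\; e^{-\lambda B n}\, m^n\, \ov{E}\big[e^{\lambda d_{x_0}}\big]^{\,n} \;=\; \big(e^{-\lambda B}\, m\, \ov{E}[e^{\lambda d_{x_0}}]\big)^{\,n} .
\end{equation*}
Choosing $B$ large enough that $e^{-\lambda B}\, m\, \ov{E}[e^{\lambda d_{x_0}}] < 1$, the right-hand side is summable in $n$, and Borel--Cantelli yields $\dsl_{y\in[x_0,x)} d_y \le B n$, $P^{\nu_\infty}$-a.s.\ for large $n$ and all $x$ with $|x| = n$.

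\textbf{The endpoint.} It remains to pass from $[x_0,x)$ to $(x_0,x]$, i.e.\ to control the single term $d_x$. Writing $\dsl_{y\in(x_0,x]} d_y = \dsl_{y\in[x_0,x)} d_y - d_{x_0} + d_x \le B n + d_x$, it is enough to bound $\max_{|x|=n} d_x$: conditioning on the tree up to level $n$ and using $E^{\nu_\infty}[|T_n|] = m^n$, one gets for $\eta > 0$
\begin{equation*}
P^{\nu_\infty}\big[\,\exists\, x,\ |x| = n,\ d_x > \eta n\,\big] \;\le\; m^n \dsl_{k > \eta n}\nu_\infty(k) \;\le\; C\,\big(m\, e^{-\gamma_\infty \eta}\big)^{\,n} ,
\end{equation*}
which is summable once $\eta > \gamma_\infty^{-1}\log m$; Borel--Cantelli then gives $d_x \le \eta n$ eventually, uniformly in $x$ at level $n$. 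Combining the two bounds proves (\ref{5.34}) with $B + \eta$ in place of $B$. (Alternatively, one may feed generation $n$ directly into the many-to-one sum and skip this paragraph.)

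\textbf{Where the work lies.} Given the spine apparatus already set up for Proposition~\ref{prop5.2}, this is essentially a re-run of that argument; the only genuinely new point needing care is the propagation of exponential moments --- from (\ref{5.32}) to (\ref{5.33}) via analyticity of $f$ and $f_\infty$ (already recorded above the statement), and then the remark that size-biasing, $\ov{\nu}(k) = \frac{k}{m}\nu_\infty(k)$, does not destroy the exponential moment, the extra factor $k$ being absorbed at the cost of slightly lowering the rate. The rest is bookkeeping of constants.
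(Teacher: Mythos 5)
Your proposal is correct and follows essentially the same route as the paper: reduce via $(R^\infty_y(1+R^\infty_y))^{-1}\le d_y$ from (\ref{1.10})~ii), bound the prefix sum $\sum_{y\in[x_0,x)}d_y$ by the many-to-one formula plus an exponential Chernoff bound under the size-biased spine measure (whose exponential moment you rightly check survives the tilt), and handle the endpoint $d_x$ by a separate first-moment union bound over generation $n$ — which is exactly the paper's (\ref{5.35})--(\ref{5.38}). The only cosmetic difference is that the paper phrases the Chernoff step through the rate function $\ov{I}$ while you fix a rate $\lambda<\gamma_\infty$ directly; these are equivalent.
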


\begin{proof}
As in the proof of Proposition \ref{prop5.2}, we can work with $P^{\nu_\infty}$ in place of $P^\nu_*$, so that $P^{\nu_\infty}$-a.s., $T = T^\infty$. We first observe that for $B > 0$ and $n \ge 1$, 
\begin{equation}\label{5.35}
\begin{split}
P^{\nu_\infty}\big[\mbox{for some $x$ with $|x| = n$}, d_x \ge \mbox{\scriptsize $\dis\frac{B}{2}$} \;n\big] & \le m^n \,P^{\nu_\infty}\big[d_{x_0} \ge \mbox{\scriptsize $\dis\frac{B}{2}$} \;n\big]
\\
& \le m^n \,e^{-\gamma_\infty \,\frac{B}{2} \,n} \,E^{\nu_\infty}[e^{\gamma_\infty d_{x_0}}].
\end{split}
\end{equation}

\n
Moreover, with $\ov{P}$ as below (\ref{5.10}), and $w_i, i \ge 0$, the spine, we have
\begin{equation}\label{5.36}
\begin{array}{l}
P^{\nu_\infty}\big[\mbox{for some $x$ with $|x| = n$}, \dsl_{y \in [x_0,x)} d_y \ge \mbox{\f $\dis\frac{B}{2}$} \;n\big]  \le
\\[1ex]
E^{\nu_\infty} \big[ \dsl_{|x|=n} 1\big\{\dsl_{y \in [x_0,x)} d_y \ge \mbox{\scriptsize$\dis\frac{B}{2}$} \;n\big\}\big] = m^n  \, \ov{P} \big[\dsl^{n-1}_{i=0} d_{w_i} \ge \mbox{\scriptsize $\dis\frac{B}{2}$} \;n\big] .
\end{array}
\end{equation}

\n
Under $\ov{P}$ the variables $d_{w_i}, i \ge 0$, are i.i.d. $\ov{\nu}$-distributed, with $\ov{\nu}$ the size-biased distribution in (\ref{5.11}). By the exponential Chernov bound, we find that 
\begin{equation}\label{5.37}
\begin{array}{l}
\ov{P} \big[\dsl^{n-1}_{i=0} d_{w_i} \ge \mbox{\scriptsize$\dis\frac{B}{2}$} \;n\big] \le \exp \{- n \,\ov{I} \big(\mbox{\scriptsize$\dis\frac{B}{2}$} \big)\big\}, \; \mbox{where for $a \ge 0$},
\\[3ex]
\ov{I}(a) = \sup\limits_{\lambda \ge 0} \{\lambda \,a - \log \ov{E}[e^{\lambda d_{x_0}}]\} \ge \mbox{\f $\dis\frac{\gamma_\infty}{2}$} \;a - b,
\\[3ex]
\mbox{with $b= \log \ov{E} [e^{\frac{\gamma_\infty}{2}\, d_{x_0}}]$ ($< \infty$ by (\ref{5.33}))}.
\end{array}
\end{equation}

\medskip\n
If we now choose $B_0$ large enough so that $\frac{\gamma_\infty}{2} \,B_0 > \log m$ and $\ov{I} (\frac{B_0}{2}) > \log m$, then we see that the probabilities in (\ref{5.35}) and (\ref{5.36}) are summable in $n$. Hence, by Borel-Cantelli's lemma, we find that $P^{\nu_\infty}$-a.s., for large $n$ and all $x \in T$ with $|x| = n$,
\begin{equation}\label{5.38}
\dsl_{y \in (x_0,x]} d_y \le B_0 \,n.
\end{equation}

\n
Since $(R^\infty_y (1 + R^\infty_y))^{-1} \le d_y$ by (\ref{1.10}) ii), this proves Proposition \ref{prop5.3}.
\end{proof}

We now come to one of the main results of this section, which states that for a super-critical Galton-Watson tree conditioned on non-extinction if the offspring distribution has some finite exponential moment then $h_* < \sqrt{2u}_*$. More precisely, one has

\begin{theorem}\label{theo5.4}
The deterministic critical values $h_*$ and $u_*$ attached to a super-critical Galton-Watson tree conditioned on non-extinction, for which the offspring distribution satisfies (\ref{5.1}), (\ref{5.32}), are such that 
\begin{equation}\label{5.39}
0 \le h_* < \sqrt{2 u}_* < \infty.
\end{equation}

\n
In addition, under (\ref{5.1}) alone, there exists $\beta > 0$ such that $P^\nu_*$-a.s.,
\begin{equation}\label{5.40}
\mbox{for large $n$ and all $x \in T$ with $|x| = n$, $\IP^G[x_0 \stackrel{\varphi \ge 0}{\longleftrightarrow} x] \le e^{-\beta n}$}.
\end{equation}
\end{theorem}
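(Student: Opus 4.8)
The plan is to deduce Theorem~\ref{theo5.4} essentially by assembling the pieces already established. For the inequality \eqref{5.39}, I would first recall that under \eqref{5.1} one has from \eqref{5.4} that $u_*$ is deterministic and lies in $(0,\infty)$, and from Lemma~\ref{lem5.1} that $h_*$ is deterministic as well. Proposition~\ref{prop5.2} shows that condition \eqref{3.1} holds $P^\nu_*$-a.s., with deterministic constants $A,M,\delta>0$. Under the extra assumption \eqref{5.32}, Proposition~\ref{prop5.3} shows that \eqref{3.2} holds $P^\nu_*$-a.s., with a deterministic $B>0$. Hence $P^\nu_*$-a.s.\ all the hypotheses of Theorem~\ref{theo3.3} are satisfied: $0<u_*<\infty$, together with \eqref{3.1} and \eqref{3.2}. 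Theorem~\ref{theo3.3} then yields $h_*<\sqrt{2u_*}$ for $P^\nu_*$-a.e.\ realization; since both $h_*$ and $u_*$ are deterministic, the strict inequality \eqref{5.39} holds as a statement about the deterministic constants. The finiteness $\sqrt{2u_*}<\infty$ and the lower bound $h_*\ge0$ come from \eqref{5.8} in Proposition~\ref{prop5.2} (equivalently from \eqref{1.19} and \eqref{5.4}).

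For \eqref{5.40}, under \eqref{5.1} alone, I would invoke Proposition~\ref{prop5.2} to get that $P^\nu_*$-a.s.\ condition \eqref{3.1} holds, i.e.\ there exist $A,M,\delta>0$ (deterministic) such that for large $n$ and all $x\in T^\infty$ with $|x|=n$ one has $\sum_{y\in(x_0,x)}1\{R^\infty_y\le A,\,d_{y^-}\le M\}\ge\delta n$. On the event (of full $P^\nu_*$-measure) where this holds, Proposition~\ref{prop3.2} applies and furnishes $\kappa=\kappa(A,M,\delta)>0$ such that for large $n$ and all $x\in T^\infty$ with $|x|=n$, $\IP^G[x_0\stackrel{\varphi\ge0}{\longleftrightarrow}x]\le 2e^{-\kappa n}$. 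Since $A,M,\delta$ are deterministic, so is $\kappa$, and absorbing the factor $2$ into the exponent for large $n$ (choose $\beta<\kappa$, e.g.\ $\beta=\kappa/2$) gives $\IP^G[x_0\stackrel{\varphi\ge0}{\longleftrightarrow}x]\le e^{-\beta n}$ for $n$ large. Finally I would observe that this suffices for all $x\in T$, not just $x\in T^\infty$: if $x\in T\setminus T^\infty$ then $|T_x|<\infty$, and any infinite connected component of $\{\varphi\ge0\}$ through $x_0$ must lie in $T^\infty$, so in particular $x_0\stackrel{\varphi\ge0}{\longleftrightarrow}x$ forces a connection within $T^\infty$ up to the last $T^\infty$-vertex on $[x_0,x]$; a short domination argument (or conditioning via the Markov property \eqref{1.8} as in the proof of \eqref{1.25}) bounds $\IP^G[x_0\stackrel{\varphi\ge0}{\longleftrightarrow}x]$ by $\IP^G[x_0\stackrel{\varphi\ge0}{\longleftrightarrow}x']$ for the appropriate $x'\in T^\infty$ with $|x'|\ge$ (something comparable to) $|x|$, which is again exponentially small. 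Since the finite components of $T\setminus T^\infty$ have, $P^\nu_*$-a.s., no effect on the asymptotics one can alternatively just restrict \eqref{5.40} to $T^\infty$ and transfer via \eqref{1.31}; but stating it for $T$ only costs the elementary observation above.

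The proof is therefore mostly bookkeeping: the substantive work has already been done in Proposition~\ref{prop5.2} (checking \eqref{3.1}), Proposition~\ref{prop5.3} (checking \eqref{3.2}), Proposition~\ref{prop3.2} (the entropic-repulsion exponential decay), and Theorem~\ref{theo3.3} (the strict inequality). The one point requiring a modicum of care is the passage from "$P^\nu_*$-a.s.\ $h_*(T)<\sqrt{2u_*}$" to "$h_*<\sqrt{2u_*}$ for the deterministic constants": this is immediate once one has Lemma~\ref{lem5.1} and the determinism of $u_*$ from \cite{Tass10}, but it should be spelled out so that \eqref{5.39} is unambiguously a deterministic inequality. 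I do not expect a genuine obstacle here; the only mild subtlety, as noted, is the reduction of \eqref{5.40} from $T$ to $T^\infty$, handled by the remark that infinite $\{\varphi\ge0\}$-paths and indeed any path reaching deep into $T$ must pass through $T^\infty$, together with \eqref{1.31}.
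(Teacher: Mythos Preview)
Your argument for \eqref{5.39} is correct and matches the paper's proof exactly: invoke \eqref{5.4} for $0<u_*<\infty$, Propositions~\ref{prop5.2} and~\ref{prop5.3} to verify \eqref{3.1} and \eqref{3.2} $P^\nu_*$-a.s., then apply Theorem~\ref{theo3.3}.

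For \eqref{5.40}, your reduction to $T^\infty$ via Propositions~\ref{prop5.2} and~\ref{prop3.2} is correct and is what the paper does. However, the passage from $T^\infty$ to all of $T$ has a genuine gap. You write that the last $T^\infty$-vertex $x'$ on $[x_0,x]$ has $|x'|\ge$ ``something comparable to $|x|$'', but this is precisely the non-trivial point: the finite subtrees of $T\setminus T^\infty$ hanging off $T^\infty$ can have arbitrarily large depth, so there is no a priori reason why $|x'|/|x|$ should be bounded away from zero uniformly over all $x$ with $|x|=n$. Appealing to \eqref{1.31} does not help either, since that identity concerns the critical value $h_*$, not the quantitative decay rate.

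The paper handles this with a short but essential first-moment Borel--Cantelli argument. Writing $m'=f'(q)<1$ for the mean offspring of the Galton--Watson tree conditioned on extinction, one picks $\eta\in(0,1)$ with $\ov{m}=m'^{\,1-\eta}m^\eta<1$, sets $\wt{n}=[\eta n]$, and lets $\wt{x}$ be the ancestor of $x$ at height $\wt{n}$. A direct computation gives
\[
P^\nu\bigl[\,\exists\,x\in T,\ |x|=n,\ \wt{x}\notin T^\infty\,\bigr]\le m^{\wt{n}}\,q\,m'^{\,n-\wt{n}}\le q\,\ov{m}^{\,n},
\]
which is summable, so $P^\nu_*$-a.s.\ for large $n$ every $x$ with $|x|=n$ has $\wt{x}\in T^\infty$. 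Then $\IP^G[x_0\stackrel{\varphi\ge0}{\longleftrightarrow}x]\le\IP^G[x_0\stackrel{\varphi\ge0}{\longleftrightarrow}\wt{x}]\le 2e^{-\kappa\eta n}$ by the $T^\infty$-bound, and one takes $\beta=\tfrac{1}{2}\eta\kappa$. Your outline is right in spirit, but this step needs to be written out; it is not an ``elementary observation''.
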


\begin{proof}
As recalled in (\ref{5.4}), we know that $0 < u_* < \infty$. Moreover, by Propositions \ref{prop5.2} and \ref{prop5.3}, we see that $P^\nu_*$-a.s., the conditions (\ref{3.1}) and (\ref{3.2}) are fulfilled. The claim (\ref{5.39}) now follows from Theorem \ref{theo3.3}. 

\medskip
Let us now prove (\ref{5.40}). By Propositions \ref{prop5.2} and \ref{prop3.2} we know that (\ref{5.40}) holds with $T$ replaced by $T^\infty$ and $e^{-\beta n}$ by $2 e^{-\kappa n}$. When the extinction probability $q \in [0,1)$ vanishes, (\ref{5.40}) readily follows. Otherwise, we set $m' = f'(q) < 1$ and choose $\eta \in (0,1)$ so that $\ov{m} = m'^{(1- \eta)} m^\eta < 1$. For $n \ge 1$, we set $\wt{n} = [\eta n]$ and for $x \in T$ such that $|x| = n$, we write $\wt{x}$ for the site in $[x_0,x]$ such that $|\wt{x}| = \wt{n}$. Clearly, $\IP^G[x_0 \stackrel{\varphi \ge 0}{\longleftrightarrow} x] \le \IP^G[x_0 \stackrel{\varphi \ge 0}{\longleftrightarrow} \wt{x}]$, and by the above mentioned exponential decay along $T^\infty$, the claim (\ref{5.40}) will follow (with say $\beta = \frac{1}{2} \,\eta \kappa)$ once we show that
\begin{equation}\label{5.41}
\mbox{$P^\nu_*$-a.s. for large $n$ and all $x \in T$ with $|x| = n$, $\wt{x} \in T^\infty$}.
\end{equation}

\n
To see this last point, we note that the Galton-Watson tree conditioned on extinction has mean offspring $m'$, see Lemma 1.2.5 of \cite{AbraDelm15} or Proposition 5.28 of \cite{LyonPere16}, and
\begin{equation*}
\begin{array}{l}
\mbox{$P^\nu[$for some $x \in T$ with $|x| = n$, $\wt{x} \notin T^\infty] \le$}
\\[1ex]
E^\nu \big[\dsl_{|y| = \wt{n}} \big(1\{|T_y| < \infty\big\} \;\dsl_{|x| = n} \,1 \{x \in T_y\}\big)\big] = m^{\wt{n}} q\,m'\,^{\!n-\wt{n}} \le q \,\ov{m}\,^n .
\end{array}
\end{equation*}
This last geometric series is convergent and (\ref{5.41}) follows by Borel Cantelli's lemma. This concludes the proof of Theorem \ref{theo5.4}.
\end{proof}

We will now introduce a sufficient condition which implies that $h_* > 0$ for the super-critical Galton-Watson tree conditioned on non-extinction under consideration. The argument we use is somewhat in the spirit of \cite{Tass10} for random interlacements on Galton-Watson trees, but the situation is more complicated in the case of the Gaussian free field.

\begin{theorem}\label{theo5.8}
Assume that the mean $m$ of the offspring distribution $\nu$ satisfies
\begin{equation}\label{5.54}
 m >2,
\end{equation}
then, almost surely on non-extinction, i.e.~$P^\nu_*$-a.s.,
\begin{equation}\label{5.56}
h_* > 0.
\end{equation}
\end{theorem}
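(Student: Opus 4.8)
The plan is to extract, almost surely on non‑extinction, an infinite binary sub‑tree of $T^\infty$ all of whose vertices have uniformly bounded degree, and then apply Proposition \ref{prop4.2}. The supercriticality threshold $m > 2$ is exactly what one expects to need: if the offspring distribution $\nu_\infty$ (which has the same mean $m$ and satisfies $\nu_\infty(0)=0$) had every individual producing at least $2$ children, a binary sub‑tree would be immediate; the content of the argument is to show that when $m>2$, after pruning away the vertices of large degree, enough branching survives to still embed a binary tree.

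\medskip
Here is the sequence of steps I would carry out. First, work with $P^{\nu_\infty}$ in place of $P^\nu_*$, so that $T = T^\infty$ almost surely, as in the proof of Proposition \ref{prop5.2}. Second, fix $M$ large and delete from $T$ every vertex $x$ with $d_x > M$ together with its descendants; call the resulting random tree $T^{(M)}$. Since $\sum_k k\,\nu_\infty(k) = m$, by dominated convergence $m_M := \sum_{k \le M} k\,\nu_\infty(k) \uparrow m$ as $M \to \infty$, so we may choose $M$ with $m_M > 2$. The point is that $T^{(M)}$ has the law of a Galton–Watson tree whose offspring law is $\nu_\infty$ truncated at $M$ (with the remaining mass moved to $0$), which is again supercritical because $m_M > 1$ — indeed $m_M>2$. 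Third, on the event that $T^{(M)}$ is infinite, I want to find inside it an infinite binary sub‑tree rooted (after a harmless shift of the root, permissible by the observation below (\ref{1.4}) and the fact that $h_*(T)=h_*(T^\infty)$ by (\ref{1.31})) at $x_0$. For this I would invoke the standard fact that a supercritical Galton–Watson tree with mean $>2$ contains, on the event of non‑extinction, an infinite binary sub‑tree almost surely; one way to see it is to note that the reduced/skeleton tree obtained by keeping only vertices with an infinite line of descent in $T^{(M)}$ is itself Galton–Watson with mean $m_M > 2 > 1$, so by a Borel–Cantelli / second–moment or a direct recursive construction one extracts, generation by generation, two distinct children each having an infinite line of descent, yielding the desired binary tree. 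All vertices of this binary tree have degree at most $M+1$ by construction, so condition (\ref{4.5}) holds. Fourth, Proposition \ref{prop4.2} then gives $h_* > 0$ for such a tree; since by Lemma \ref{lem5.1} the value $h_*$ is $P^\nu_*$‑a.s. constant, and the event of extracting such a binary tree has positive probability under $P^{\nu_\infty}$ (in fact probability one on non‑extinction, but positive probability already suffices), we conclude $h_* > 0$, which is (\ref{5.56}).

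\medskip
\textbf{The main obstacle} I anticipate is the step of genuinely embedding an infinite \emph{binary} tree with bounded degrees, as opposed to merely showing $T^{(M)}$ is infinite or that it contains \emph{some} infinite sub‑tree. Mean $>2$ for the skeleton does not by itself force every vertex to have two surviving children, so the binary tree must be built by a greedy recursive selection together with a Borel–Cantelli argument controlling the (random) generations at which the selection can be performed; one needs a quantitative lower bound — e.g. that the number of generation‑$n$ vertices of the skeleton grows like $m_M^n$, or a first/second moment estimate on the number of vertices admitting two descendants in the skeleton — to guarantee the construction never gets stuck. This is the spirit of Tassy's argument in \cite{Tass10}, adapted here to the truncated tree, and is where the hypothesis $m > 2$ (rather than $m>1$) is genuinely used; the additional complication in the Gaussian free field case, alluded to in the introduction, is precisely that one cannot work directly with $T^\infty$ but must first truncate degrees, since Proposition \ref{prop4.2} requires the bounded‑degree hypothesis (\ref{4.5}).
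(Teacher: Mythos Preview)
Your proposed route has a genuine gap: the assertion that a supercritical Galton--Watson tree with mean $m>2$ almost surely contains an infinite binary sub-tree (on non-extinction) is false. Take $\nu_\infty(1)=\nu_\infty(4)=\tfrac12$, so $m=\tfrac52>2$ and $\nu_\infty(0)=0$. The probability $p$ that a given vertex is the root of an infinite binary sub-tree satisfies $p=\tfrac12\,P[\mathrm{Bin}(4,p)\ge 2]=3p^2-4p^3+\tfrac32 p^4$, and one checks directly that the cubic $\tfrac32 p^3-4p^2+3p-1$ is strictly negative on $(0,1]$, so $p=0$. Hence almost surely no vertex of $T$ is the root of an infinite binary sub-tree, and shifting the root cannot help. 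Your truncation step does nothing here (all degrees are $\le 4$), so Proposition~\ref{prop4.2} is simply not applicable to this tree, yet Theorem~\ref{theo5.8} asserts $h_*>0$ for it. The ``greedy recursive selection with Borel--Cantelli'' you sketch cannot succeed: half the vertices have a single child, so the selection gets stuck with probability one.

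The paper's proof avoids Proposition~\ref{prop4.2} entirely and instead works directly with the functions $r^n_{x,h}=1-q^n_{x,h}$ from Lemma~\ref{lem1.2}. Averaging the recursion over the Galton--Watson law and using the branching property yields an identity (see (\ref{5.59})) of the form $\widetilde\gamma^n_h=f_{\infty,M}\big(\cL(\tfrac{h^2}{2})\big)-f_{\infty,M}\big(\cL(\tfrac{h^2}{2})-\gamma^n_h\big)$ between two averaged quantities $\gamma^n_h,\widetilde\gamma^n_h$. One then shows $\gamma^n_h>\tfrac{1}{m'}\,\widetilde\gamma^n_h$ for small $h>0$ and some $m'\in(2,m)$, the factor $\tfrac12$ in the comparison coming from $\overline\Phi(0)=\tfrac12$; combined with $f'_{\infty,M}>m'$ near $1$, this forces a uniform positive lower bound on $\gamma^n_h$, hence $q_{x_0,h}\not\equiv 1$ with positive probability, and the $0$--$1$ law of Lemma~\ref{lem5.1} finishes. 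The hypothesis $m>2$ enters precisely to beat the factor $\tfrac12=\overline\Phi(0)$, not through any embedded binary tree.
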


\begin{proof}
By (\ref{1.31}) it suffices to consider $T^\infty$ in place of $T$, and by the observation below (\ref{5.2}) to replace $\nu$ by $\nu_\infty$, so that $P^{\nu_\infty}$-a.s., $T^\infty = T$, noting that $\nu_\infty$ has the same mean $m$ as $\nu$.

\medskip
In essence, the strategy of the proof is to obtain for small $h>0$, uniformly in $n$, a positive lower bound for the quantity $\gamma^n_h$ defined below (\ref{5.58a}), which involves the function $r^n_{x_0,h}(\cdot)$. From this lower bound we will infer that $P^{\nu_\infty}$-a.s., the function $q^n_{x_0,h}(\cdot)$ is not identically equal to $1$, so that $h_* \ge h (> 0)$. The above mentioned lower bound will stem from an inequality based on the branching property (see (\ref{5.4}) for the definition of $\cL(\cdot)$)

\begin{equation*}
m' \gamma^n_h > f_{\infty,M} \big(\cL(\mbox{\f $\dis\frac{h^2}{2}$})\big) - f_{\infty,M} \big(\cL(\mbox{\f $\dis\frac{h^2}{2}$}) - \gamma^n_h\big)
\end{equation*}
where $m' \in (2,m)$, and $h>0$ is such that the derivative of the truncated generating function $f_{\infty,M}$ (see (\ref{5.57a}))  is bigger than $m'$ in a neighborhood of $\cL(h^2/2)$.

\medskip
Let us now proceed with the proof. Our first objective is to establish the key identity that appears in (\ref{5.59}) below. For $h \in \IR$, $n \ge 0$, and $x$ in $T$ with $|x| \le n$ (from now on we assume that $T^\infty = T$), we consider in the notation of (\ref{1.21}) the function $r^n_{x,h}(a) = 1- q^n_{x,h}(a)$, for $a \in \IR$, so that
\begin{equation*}
\begin{array}{l}
\left\{ 
\begin{array}{l}
r^n_{x,h} = 1_{[h,\infty)}, \; \mbox{for $x \in T_n$},
\\[1ex]
r^n_{x,h} = 1_{[h,\infty)} \big(1 - \textstyle{\prod\limits_{y^- = x}} \big(1 - Q^{\alpha_y}(r^n_{y,h})\big)\big), \;\mbox{for $x$ in $T$, with $|x| < n$}
\end{array}
\right.
\\
\quad \mbox{(and $r^n_{x,h}$ decreases with $n$ and tends to $r_{x,h} = 1-q_{x,h}$, as $n \r \infty$,}
\\
\quad \; \mbox{by Lemma \ref{lem1.2}).}
\end{array}
\end{equation*}

\n
Multiplying both members of the last equality by $e^{-\frac{a^2}{2R^\infty_{x}}} = \Pi_{y^- = x}\; e^{-\frac{a^2}{2(1+ R^\infty_y)}}$, we obtain for $x$ in $T$ with $|x| < n$ and $a$ in $\IR$,
\begin{equation}\label{5.57}
e^{-\frac{a^2}{2R^\infty_{x}}}  r^n_{x,h} (a) = 1_{[h,\infty)}(a) \big( \textstyle{\prod\limits_{y^- = x}} \!\! e^{-\frac{a^2}{2(1+ R^\infty_y)}} \!\! - \!\!\textstyle{\prod\limits_{y^- = x}} \!\!\! \big(e^{-\frac{a^2}{2(1+ R^\infty_y)}}  \!\! - \! e^{-\frac{a^2}{2(1+ R^\infty_y)}} Q^{\alpha_y} (r^n_{y,h})(a)\big)\big).
\end{equation}

\n
Recall that $(1 + R^\infty_{x_0})^{-1} = 1 - \alpha_{x_0}$ and $\cL(u) = E^{\nu_\infty} [e^{-u(1 - \alpha_{x_0})}]$ in the notation of (\ref{5.4}). 
For $M \ge 1$, we write
\begin{equation}\label{5.57a}
f_{\infty,M} (s) = \dsl_{k \le M}  s^k \nu_\infty(k) \; , 0 \le s \le 1.
\end{equation}

Choosing $x = x_0$, and $n+1$ in place of $n$ in (\ref{5.57}), and multiplying both members by the indicator function of the event $\{ d_{x_0} \le M \}$, after $P^{\nu_\infty}$-integration and using the branching property, we see that for $h,a \in \IR$, $n \ge 0$, and $M \ge 1$
\begin{equation}\label{5.58}
\begin{array}{l}
E^{\nu_\infty} \big[e^{-\frac{a^2}{2R^\infty_{x_0}}} \; r^{n+1}_{x_0,h}(a), d_{x_0} \le M  \big] = 
\\[1ex]
1_{[h,\infty)}(a) \big\{f_{\infty,M} \big(\cL (\mbox{\f $\dis\frac{a^2}{2}$})\big) - f_{\infty,M}\big(\cL (\mbox{\f $\dis\frac{a^2}{2}$}) - E^{\nu_\infty} [ e^{-\frac{a^2}{2(1+R^\infty_{x_0})}} Q^{\alpha_{x_0}} (r^n_{x_0,h})(a)]\big)\big\}.
\end{array}
\end{equation}

By  (\ref{5.54}) we can now choose a large $M$, $s_0 \in [0,1)$, and $m' \in (2,m)$ such that
\begin{equation}\label{5.58a}
f'_{\infty,M}(s) > m' ,\; \mbox{for} \; s_0 \le s \le 1 .
\end{equation}

\n
With this choice of $M$, when $a = h \ge 0$, we denote by $\wt{\gamma}^n_h$ the expectation on the left-hand side of (\ref{5.58}) and by $\gamma^n_h$ the expectation on the right-hand side so that
\begin{equation}\label{5.59}
\wt{\gamma}^n_h = f_{\infty,M} \big(\cL(\mbox{\f $\dis\frac{h^2}{2}$})\big) - f_{\infty,M} \big(\cL(\mbox{\f $\dis\frac{h^2}{2}$}) - \gamma^n_h\big), \;\mbox{for $n \ge 0$ and $h \ge 0$}.
\end{equation}
In addition (since $r^n_{x_0,h}(\cdot)$ is non-decreasing)
\begin{equation}\label{5.60}
\begin{split}
\gamma^n_h &\ge  e^{-\frac{h^2}{2}} E^{\nu_\infty} [Q^{\alpha_{x_0}}(r^n_{x_0,h}) (h)] \stackrel{(\ref{1.6})}{\ge} e^{-\frac{h^2}{2}} E^{\nu_\infty} \big[r^n_{x_0,h}(h) \,P^Y [\alpha_{x_0}\, h + \sqrt{\alpha}_{x_0} \;Y \ge h]\big]
\\[1ex]
& \ge e^{-\frac{h^2}{2}} E^{\nu_\infty}  \big[d_{x_0} \le M, r^{n+1}_{x_0,h}(h) \;\ov{\Phi} \big((\mbox{\f $\dis\frac{1}{\sqrt{\alpha_{x_0}}}$} - \sqrt{\alpha}_{x_0}) \,h\big)\big] > 0,
\end{split}
\end{equation}
with $\ov{\Phi}(t) = P^Y[Y > t]$, for $t \in \IR$, and we have used that $r^{n+1}_{x_0,h} > 0$ on $[h,\infty)$.

\bigskip
When $\{ d_{x_0} \le M \}$, we have $\alpha^{-{1/2}}_{x_0} \le c(M)$ by (\ref{1.10}), (\ref{1.5}). Note also that $\ov{\Phi}(0) = 1/2 > 1/m'$. We can thus find by the definition of $\wt{\gamma}^n_h $ and  (\ref{5.60}) a small $h > 0$, such that
\begin{equation}\label{5.63}
\mbox{$\gamma^n_h  > \mbox{\f $\dis\frac{1}{m'}$} \; \wt{\gamma}^n_h $ , for all $n \ge 0$, as well as $\cL(\mbox{\f $\dis\frac{h^2}{2}$}) \ge \mbox{\f $\dis\frac{1 + s_0}{2}$}$}.
\end{equation}

\n
Since $f'_{\infty,M}(s) > m' $ for $ s_0 \le s \le 1$ it follows from (\ref{5.59}), (\ref{5.63}) that $\gamma^n_h \ge \frac{1-s_0}{2}$, for all $n \ge 0$. Hence, by monotone convergence, we find that
\begin{equation}\label{5.64}
E^{\nu_\infty} \big[e^{-\frac{h^2}{2(1 + R^\infty_{x_0})}} \;Q^{\alpha_{x_0}} (r_{x_0,h})(h)] \ge \mbox{\f $\dis\frac{1-s_0}{2}$} > 0.
\end{equation}

\n
This proves that with positive $P^{\nu_\infty}$-measure, $q_{x_0,h}$ is not identically $1$, and by the $0$-$1$ law stated above (\ref{5.5}) this happens $P^{\nu_\infty}$-almost surely. By the comment below (\ref{5.6}) we see that that $h_* \ge h$. This proves (\ref{5.56}), and concludes the proof of \hbox{Theorem \ref{theo5.8}.}
\end{proof}

\begin{remark}\label{rem5.9} \rm  Incidentally, Proposition \ref{prop4.2} implies that $h_* > 0$ for a binary branching, a case corresponding to $m=2$, which is not covered by Theorem \ref{theo5.8}. One can thus wonder about the nature of broader assumptions under which Theorem \ref{theo5.8} continues to hold. For instance, does $h_* > 0$ hold almost surely on non-extinction as soon as $m > 1$ ~? 
\hfill $\square$
\end{remark}
\appendix

\section{Appendix}
\setcounter{equation}{0}

In this appendix we provide for the reader's convenience a proof along the argument of Theorem 2 of \cite{BricLeboMaes87} showing that $h_* \ge 0$ in the general set-up of transient weighted graphs.

\medskip
We consider a locally finite, connected, transient weighted graph, with vertex set $E$, and symmetric weights $c_{x,y} = c_{y,x} \ge 0$, which are positive exactly when $x \sim y$, i.e.~when $x$ and $y$ are neighbors. We denote by $g(x,y)$, $x,y \in E$, the Green function, by $(\varphi_x)_{x \in E}$ the canonical Gaussian free field, and by $\IP^G$ its distribution. The discrete time walk on the weighted graph when located in $x$ jumps to a neighbor $y$ with probability $c_{x,y} / \lambda_x$, where $\lambda_x = \sum_{x' \sim x} c_{x,x'}$. It is governed by the law $P_x$. We use otherwise similar notation as in Section 1.

\medskip
We consider a base point $x_0 \in E$. We say that $C \subseteq E$ is a contour surrounding $x_0$ (in the terminology of  \cite{BricLeboMaes87}), when there exists a finite connected set $K \subseteq E$ containing $x_0$ such that $C = \partial K$, or when $C = \{x_0\}$ and we set $\{x_0\} = \partial \phi$ by convention. Given a contour $C$ surrounding $x_0$, we write ${\rm Int} \,C = K$ for the unique finite connected set $K$ containing $x_0$, such that $C = \partial K$, when $C \not= \{x_0\}$, or $K = \phi$, when $C = \{x_0\}$ (when $C \not= \{x_0\}$, ${\rm Int} \,C$ is the connected component of $E \backslash C$ containing $x_0$).

\medskip
Given a finite family of contours $C_i, 1 \le i \le n$ surrounding $x_0$, we define the maximal contour via
\begin{equation}\label{A1}
\max\{C_1,\dots,C_n\} = \partial \Big(\textstyle{\bigcup\limits^n_{i=1}} {\rm Int}\,C_i\Big),
\end{equation}
and observe that
\begin{equation}\label{A2}
\max\{C_1,\dots,C_n\} \subseteq \textstyle{\bigcup\limits^n_{i=1}}  \,C_i .
\end{equation}

\n
We now consider a finite connected set $U \ni x_0$, $\Delta = \partial U$ and $\ov{U} = U \cup \Delta$. Given $h \in \IR$, we introduce the disconnection event
\begin{equation}\label{A3}
\mbox{$D^h_{x_0,U} = \{x_0$ is not connected to $\Delta$ by a path in $\ov{U}$ where $\varphi \ge h\}$}.
\end{equation}

\begin{lemma}\label{lemA1}
\begin{align}
D^h_{x_0,U} = \{\varphi, &\; \mbox{there is a contour $C$ surrounding $x_0$, with} \label{A4}
\\[-0.5ex]
&\;\mbox{${\rm Int}\, C \subseteq U$, where $\varphi < h\}$}. \nonumber
\end{align}
\end{lemma}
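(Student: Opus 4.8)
The plan is to prove the two inclusions in (\ref{A4}) directly, by an elementary analysis of paths in $\ov{U}$; the maximal-contour operation (\ref{A1})--(\ref{A2}) is not needed for this statement.

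First I would treat the inclusion $\supseteq$ in (\ref{A4}). Given a configuration $\varphi$ for which some contour $C$ surrounding $x_0$ has ${\rm Int}\,C \subseteq U$ and $\varphi < h$ on $C$, I would show that $D^h_{x_0,U}$ holds. The degenerate case $C = \{x_0\}$ is immediate: then $\varphi_{x_0} < h$, so $x_0$ itself fails $\varphi \ge h$ and is joined to $\Delta$ by no path along which $\varphi \ge h$. Otherwise I set $K = {\rm Int}\,C$, a finite connected set containing $x_0$ with $\partial K = C$; since $K \subseteq U$ one has $\Delta \cap K = \phi$ (as $\Delta = \partial U$ is disjoint from $U$) and $\partial K \subseteq U \cup \partial U = \ov{U}$. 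Then for any path $x_0 = v_0, v_1, \dots, v_\ell$ in $\ov{U}$ with $v_\ell \in \Delta$, the smallest index $j$ with $v_j \notin K$ satisfies $v_j \in \partial K = C$, because $v_j$ is a neighbor of $v_{j-1} \in K$; hence $\varphi_{v_j} < h$, and no path in $\ov{U}$ from $x_0$ to $\Delta$ can stay inside $\{\varphi \ge h\}$, which is exactly $D^h_{x_0,U}$.

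Next I would treat the reverse inclusion. Starting from $\varphi \in D^h_{x_0,U}$: if $\varphi_{x_0} < h$, the contour $C = \{x_0\}$ works (with ${\rm Int}\,C = \phi \subseteq U$). If $\varphi_{x_0} \ge h$, I would let $K$ be the set of $v \in \ov{U}$ joined to $x_0$ by some path in $\ov{U}$ along which $\varphi \ge h$, i.e.\ the connected component of $x_0$ in the subgraph of $\ov{U}$ induced by the vertices where $\varphi \ge h$. Then $K$ is finite, connected, and contains $x_0$; and since $\varphi \in D^h_{x_0,U}$ no vertex of $\Delta$ lies in $K$, so $K \subseteq U$. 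I would then take $C = \partial K$: this is a contour surrounding $x_0$ with ${\rm Int}\,C = K \subseteq U$, and $\varphi < h$ on $C$, since a vertex $v \in \partial K$ with $\varphi_v \ge h$ could be appended to a $\{\varphi \ge h\}$-path in $\ov{U}$ from $x_0$ to a neighbor of $v$ in $K$, forcing $v \in K$ against $v \notin K$. This puts $\varphi$ in the right-hand side of (\ref{A4}).

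I do not anticipate a genuine obstacle: the statement is purely combinatorial. The only points needing a little care are the handling of the degenerate contour $\{x_0\}$ together with the convention ${\rm Int}\,\{x_0\} = \phi$; the observation that $K \subseteq U$ forces $\partial K \subseteq \ov{U}$, so that all the paths under consideration actually remain in $\ov{U}$; and the basic fact that a path which starts in a connected set $K$ and ends outside it must cross $\partial K$.
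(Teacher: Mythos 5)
Your proposal is correct and follows essentially the same route as the paper: the inclusion $\supseteq$ via the observation that any path in $\ov{U}$ from $x_0$ to $\Delta$ must exit ${\rm Int}\,C$ through a point of $C$, and the inclusion $\subseteq$ by taking the connected component of $x_0$ in $\{\varphi \ge h\}$ (with the convention $C=\{x_0\}$, ${\rm Int}\,C=\phi$ when it is empty) and using its outer boundary as the contour. Your write-up merely makes explicit the small verifications (finiteness of $K$, $\partial K\subseteq\ov{U}$, the first-exit argument) that the paper leaves implicit.
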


\begin{proof}
Denote by $\wt{D}$ the event on the right-hand side of (\ref{A4}). First note that $\wt{D} \subseteq D^h_{x_0,U}$. Indeed, if $C$ is a contour surrounding $x_0$, with ${\rm Int} \, C\subseteq U$ and where $\varphi < h$, any path in $\ov{U}$ from $x_0$ to $\Delta = \partial U$ will exit ${\rm Int} \, C$ at a point of $C$ where $\varphi < h$. Conversely, one has $D^h_{x_0,U} \subseteq \wt{D}$. Indeed, when $D^h_{x_0,U}$ occurs, the connected component of $\{\varphi \ge h\}$ containing $x_0$ is contained in $U$ and its outer boundary (understood as $\{x_0\}$ when this component is empty) yields a contour $C$ with ${\rm Int} \, C \subseteq U$ where $\varphi < h$. This proves the lemma.
\end{proof}

On the disconnection event $D^h_{x_0,U}$, we can thus define with (\ref{A1})
\begin{align}
C_{< h}^{\max} (U) = &\; \mbox{the maximal contour of the family of contours $C$}\label{A5}
\\[-0.5ex]
&\;\mbox{surrounding $x_0$ with ${\rm Int} \,C \subseteq U$, where $\varphi < h$}. \nonumber
\end{align}

\n
We recall the definition (\ref{0.3}) of the critical value $h_*$.

\begin{proposition}\label{propA2}
\begin{equation}\label{A6}
h_* \ge 0.
\end{equation}
\end{proposition}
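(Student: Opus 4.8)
The plan is to adapt the Peierls-type argument of Brickmont--Lebowitz--Maes \cite{BricLeboMaes87}, exploiting the contour representation of the disconnection event established in Lemma \ref{lemA1}. The key point is that for $h<0$ the Gaussian free field values $\varphi_x$ are unlikely to be below $h$, and the probability that a given contour $C$ supports the event $\{\varphi<h\text{ on }C\}$ decays fast enough in $|C|$ that a union bound over all contours surrounding $x_0$ gives a nontrivial lower bound on $\IP^G[x_0 \stackrel{\varphi \ge h}{\longleftrightarrow} \infty]$. Combined with the definition (\ref{0.3}) (equivalently (\ref{1.18})), showing $\IP^G[x_0 \stackrel{\varphi \ge h}{\longleftrightarrow} \infty]>0$ for every $h<0$ proves $h_*\ge 0$.

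First I would fix $h<0$ and an exhausting sequence $U_N\uparrow E$ of finite connected sets containing $x_0$, with $\Delta_N=\partial U_N$. By Lemma \ref{lemA1}, the disconnection event $D^h_{x_0,U_N}$ equals the event that some contour $C$ surrounding $x_0$ with $\mathrm{Int}\,C\subseteq U_N$ has $\varphi<h$ on all of $C$. Hence
\begin{equation*}
\IP^G[D^h_{x_0,U_N}] \le \sum_{C} \IP^G[\varphi_x<h \text{ for all } x\in C],
\end{equation*}
the sum over contours $C$ surrounding $x_0$ with $\mathrm{Int}\,C\subseteq U_N$. The next step is to bound each summand. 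Since each $\varphi_x$ is centered Gaussian with variance $g(x,x)$, and since $h<0$, one has $\IP^G[\varphi_x<h]\le \frac12 e^{-h^2/(2g(x,x))}$; more importantly, using positive association (the FKG inequality for the Gaussian free field, cf.\ the Appendix of \cite{Giac03}) the events $\{\varphi_x<h\}$, $x\in C$, are negatively correlated in the relevant direction, or alternatively one directly estimates $\IP^G[\varphi_x<h \text{ for all }x\in C]$ by a Gaussian comparison. The cleanest route is: condition successively along the vertices of $C$ and use that adding a conditioning variable only pushes the conditional mean up by a nonnegative multiple (since covariances of the GFF are nonnegative), so each conditional probability of $\{\varphi_x<h\}$ is at most $\IP^G[N(0,g(x,x))<h]\le \rho$ for a constant $\rho=\rho(h)<\tfrac12$ uniform over $x$ when the graph has bounded Green-diagonal; in the locally finite transient case one uses instead the bound with $g(x,x)$ possibly varying but still $\le \rho(h)<1$ once $h<0$, giving $\IP^G[\varphi<h\text{ on }C]\le \rho^{|C|}$.

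The remaining ingredient is the standard Peierls counting bound: the number of contours $C$ surrounding $x_0$ with $|C|=k$ is at most $C_0\,\Delta_0^{\,k}$ for constants depending only on the (bounded) local geometry around $x_0$ — this is where local finiteness and connectedness enter; in the genuinely unbounded-degree setting one must be slightly more careful and instead organize the count by the distance of $C$ from $x_0$, but the essential estimate is that contours close to $x_0$ are few and contours far from $x_0$ have $|C|$ large. Choosing $h$ sufficiently negative makes $\rho$ small enough that $\sum_{k}(\#\{C:|C|=k\})\,\rho^{k}<1$, and in fact one can make the contours that surround $x_0$ but are not the trivial one $\{x_0\}$ contribute arbitrarily little; hence $\limsup_N \IP^G[D^h_{x_0,U_N}]<1$, so $\IP^G[x_0\stackrel{\varphi\ge h}{\longleftrightarrow}\infty]=1-\lim_N\IP^G[D^h_{x_0,U_N}]>0$. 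Since this holds for all $h<0$ (replacing $h$ by an even more negative value only helps), the characterization (\ref{1.18}) yields $h_*\ge 0$. The main obstacle I anticipate is the contour-counting step in the absence of a uniform degree bound: one has to verify that the number of contours of a given size surrounding a fixed vertex still grows at most exponentially, or else refine the union bound to weight contours by their distance to $x_0$, which is routine but needs the local finiteness hypothesis used carefully.
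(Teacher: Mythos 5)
Your approach has a fatal structural flaw before one even reaches the technical estimates: a Peierls union bound of the form $\sum_C (\#\{C:|C|=k\})\,\rho(h)^k<1$ can only be closed when $\rho(h)$ is small, i.e.\ when $h$ is \emph{very} negative, and therefore at best yields $\IP^G[x_0\stackrel{\varphi\ge h}{\longleftrightarrow}\infty]>0$ for $h\le -H_0$ with some large $H_0$, hence only $h_*\ge -H_0$. To prove $h_*\ge 0$ you must establish percolation of $\{\varphi\ge h\}$ for every $h<0$, in particular for $h$ arbitrarily close to $0$, where $\rho(h)\to 1/2$ and the union bound diverges against any exponential contour count. Your closing sentence ("replacing $h$ by an even more negative value only helps") inverts the logic: making $h$ more negative helps the Peierls bound but weakens the conclusion you need. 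In addition, the key estimate $\IP^G[\varphi<h\text{ on }C]\le\rho^{|C|}$ is not justified and the heuristic you give for it is wrong in direction. Since the covariances $g(x,y)$ are nonnegative, the events $\{\varphi_x<h\}$ are \emph{positively} associated (they are all increasing in $-\varphi$), so the product bound is a \emph{lower} bound on the joint probability; and in the successive-conditioning version, conditioning on $\{\varphi_y<h\}$ for previously processed $y$ pushes the conditional mean of $\varphi_x$ \emph{down} (the conditioned values can be far below $h$), which can make each conditional factor arbitrarily close to $1$. The contour-counting issue you flag for unbounded degree is real too, but it is secondary to these two problems.

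The paper's proof (following Theorem~2 of \cite{BricLeboMaes87}) sidesteps all of this and is worth internalizing: it starts from Lemma~\ref{lemA1} as you do, but instead of a union bound it defines on $D^{-\varepsilon}_{x_0,U}$ the \emph{maximal} contour $C^{\max}_{<-\varepsilon}(U)$ with $\varphi<-\varepsilon$, notes that $\{C^{\max}_{<-\varepsilon}=C\}$ is $\sigma_{({\rm Int}\,C)^c}$-measurable, and exploits the identity $0=\IE^G[{\rm sign}(\varphi_{x_0})]$. By the Markov property, $\varphi_{x_0}=h_C+\xi_C$ with $h_C=E_{x_0}[\varphi(X_{H_C})]\le-\varepsilon$ on that event and $\xi_C$ an independent centered Gaussian of variance $g_{{\rm Int}\,C}(x_0,x_0)\le g(x_0,x_0)$, so ${\rm sign}(\varphi_{x_0})$ has conditional expectation at most $-(2\Phi(\varepsilon/\sqrt{g(x_0,x_0)})-1)<0$ on the disconnection event. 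Summing over $C$ yields $\IP^G[D^{-\varepsilon}_{x_0,U}]\le\bigl(2\Phi(\varepsilon/\sqrt{g(x_0,x_0)})\bigr)^{-1}<1$ uniformly in $U$ and for every $\varepsilon>0$, which is exactly the quantitative statement your route cannot produce near $h=0$. If you want to salvage your write-up, you should replace the union bound and the $\rho^{|C|}$ estimate by this sign-symmetry argument.
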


\begin{proof}
We will show that for any $\varepsilon > 0$,
\begin{equation} \label{A7}
\sup\limits_U \IP^G[D^{-\varepsilon}_{x_0,U}] < 1
\end{equation}

\n
($U$ runs over the collection of finite connected sets containing $x_0$).

\medskip
This will imply that for any $\varepsilon > 0$, with positive $\IP^G$-probability the connected component of $x_0$ in $\{\varphi \ge -\varepsilon\}$ is infinite, and (\ref{A6}) will follow.

\medskip
We thus prove (\ref{A7}). For $C$ a contour surrounding $x_0$, with ${\rm Int} \,C \subseteq U$, we have by the Markov property of $\varphi$ under $\IP^G$ (see for instance Proposition 2.3 of \cite{Szni12e})
\begin{equation}\label{A8}
\begin{array}{l}
\mbox{$\varphi_{x_0} = h_C + \xi_C$ where $h_C = E_{x_0} [\varphi(X_{H_C})]$ and}
\\
\mbox{$\xi_C$ is $N(0,g_{\rm Int \,C} (x_0, x_0))$-distributed and independent of $\sigma_{({\rm Int} \,C)^c}$}
\end{array}
\end{equation}
(the notation is similar as in (\ref{1.7})).

\medskip
By Lemma \ref{lemA1} and (\ref{A5}), we see that for $h= - \varepsilon$ and $C^{\max}_{< -\varepsilon}$ a shorthand for $C_{< -\varepsilon}^{\max}(U)$
\begin{equation}\label{A9}
\begin{split}
D^{-\varepsilon}_{x_0,U}  =  \textstyle{\bigcup\limits_C} \{C_{< -\varepsilon}^{\max} = C\},& \;\mbox{where $C$ runs over the collection of}
\\[-2.8ex]
&\; \mbox{contours surrounding $x_0$, with ${\rm Int} \,C \subseteq U$}.
\end{split}
\end{equation}
We thus find that
\begin{equation}\label{A10}
0 = \IE^G[{\rm sign}(\varphi_{x_0})] \stackrel{(\ref{A9})}{=} \dsl_C \,\IE^G[{\rm sign}(\varphi_{x_0}), \;C_{< -\varepsilon}^{\max} = C] + \IE^G[{\rm sign}(\varphi_{x_0}), (D^{-\varepsilon}_{x_0,U})^c],
\end{equation}

\medskip\n
where $C$ runs over the same family as in (\ref{A9}).

\medskip
Note that the event $\{C^{\max}_{< -\varepsilon} = C\}$ is $\sigma_{({\rm Int} \,C)^c}$-measurable, and by (\ref{A8}) we find that
\begin{equation}\label{A11}
\begin{array}{l}
\IE^G[{\rm sign}(\varphi_{x_0}), \; C^{\max}_{< - \varepsilon} = C] = \IE^G[{\rm sign}(h_C + \xi_C), \; C^{\max}_{< - \varepsilon} = C] = 
\\[2ex]
\IE^G\Big[\Big(2 \Phi \Big(\mbox{\f $\dis\frac{h_C}{\sqrt{g_{{\rm Int} \,C}(x_0,x_0)}}$}\Big) -1 \Big), \; C^{\max}_{< - \varepsilon} = C\Big] \le 
\\[3ex]
-\Big(2 \Phi \Big(\mbox{\f $\dis\frac{\varepsilon}{\sqrt{g(x_0,x_0)}}$}\Big) -1 \Big)\; \IP^G[C^{\max}_{< - \varepsilon} = C],
\end{array}
\end{equation}

\medskip\n
where $\Phi(t) = P^Y[Y \le t]$, for $t \in \IR$, with $Y$ a $N(0,1)$-distributed variable, and we have used  that $h_C \le - \varepsilon$ on $\{C^{\max}_{< -\epsilon} = C\}$ and $g_{{\rm Int} \,C}(x_0,x_0) \le g(x_0,x_0)$ for the last inequality of (\ref{A11}).

\medskip
Coming back to (\ref{A10}), we see that
\begin{equation}\label{A12}
\begin{array}{l}
0 \le -\Big(2 \Phi \Big(\mbox{\f $\dis\frac{\varepsilon}{\sqrt{g(x_0,x_0)}}$}\Big) - 1\Big) \;\dsl_C \IP^G[C_{< -\varepsilon}^{\max} = C] + 1 - \IP^G[D^{-\varepsilon}_{x_0,U}]
\\[1ex]
\stackrel{(\ref{A9})}{=} - 2 \Phi  \Big(\mbox{\f $\dis\frac{\varepsilon}{\sqrt{g(x_0,x_0)}}$}\Big) \,\IP^G [D^{-\varepsilon}_{x_0,U}] + 1.
\end{array}
\end{equation}
This shows that
\begin{equation}\label{A13}
\IP^G [D^{-\varepsilon}_{x_0,U}] \le \Big(2 \Phi \Big(\mbox{\f $\dis\frac{\varepsilon}{\sqrt{g(x_0,x_0)}}$}\Big) \Big)^{-1} (< 1),
\end{equation}
and proves (\ref{A7}). Our claim (\ref{A6}) follows.
\end{proof}

\bigskip\n
{\bf Acknowledgements:} We thank an anonymous referee for pointing out reference \cite{GrimKest}, which led to a simplification of the proof of Proposition \ref{prop5.2}.

\end{document}